\newtheorem{theorem}{Theorem}[section]
\newtheorem{cor}[theorem]{Corollary}
\newtheorem{lemma}[theorem]{Lemma}
\newtheorem{prop}[theorem]{Proposition}
\newcommand{\nm}{\noalign{\smallskip}}
\newcommand{\ds}{\displaystyle}
\newcommand{\p}{\partial}
\newcommand{\eqnref}[1]{(\ref {#1})}
\newcommand{\Cbb}{\mathbb{C}}
\newcommand{\Kbb}{\mathbb{K}}
\newcommand{\Rbb}{\mathbb{R}}
\newcommand{\Sbb}{\mathbb{S}}
\newcommand{\Zbb}{\mathbb{Z}}
\newcommand{\la}{\langle}
\newcommand{\ra}{\rangle}
\newcommand{\Hcal}{\mathcal{H}}
\newcommand{\Kcal}{\mathcal{K}}
\newcommand{\Dcal}{\mathcal{D}}
\newcommand{\Scal}{\mathcal{S}}
\newcommand{\Ga}{\alpha}
\newcommand{\Gb}{\beta}
\newcommand{\Ge}{\epsilon}
\newcommand{\Gvf}{\varphi}
\newcommand{\Gl}{\lambda}
\newcommand{\Gn}{\eta}
\newcommand{\Gm}{\mu}
\newcommand{\Gv}{\nu}
\newcommand{\Gt}{\theta}
\newcommand{\Gr}{\rho}
\newcommand{\Gs}{\sigma}
\newcommand{\Gy}{\psi}
\newcommand{\Gz}{\zeta}
\newcommand{\GD}{\Delta}
\newcommand{\GG}{\Gamma}
\newcommand{\GL}{\Lambda}
\newcommand{\GT}{\Theta}
\newcommand{\GO}{\Omega}
\newcommand{\beq}{\begin{equation}}
\newcommand{\eeq}{\end{equation}}
\def\ol{\overline}
\newcommand{\zbar}{\bar{z}}
\newcommand{\Gzbar}{\bar{\Gz}}
\numberwithin{equation}{section}
\numberwithin{figure}{section}
\begin{document}

\title{Spectrum of the Neumann-Poincar\'e operator and optimal estimates for transmission problems in presence of two circular inclusions\thanks{\footnotesize This work was supported by NRF (of S. Korea) grants No. 2019R1A2B5B01069967.}}

\author{Yong-Gwan Ji\thanks{Department of Mathematics and Institute of Applied Mathematics, Inha University, Incheon 22212, S. Korea (22151063@inha.edu, hbkang@inha.ac.kr).} \and Hyeonbae Kang\footnotemark[2] }

\maketitle

\begin{abstract}
We consider the field concentration for the transmission problems of the homogeneous and inhomogeneous conductivity equations in the presence of closely located circular inclusions. We revisit these well-studied problems by exploiting the spectral nature residing behind the phenomenon of the field concentration. The spectral approach enables us not only to recover the existing results with new insights but also to produce significant new results. We show that when relative conductivities of inclusions have different signs, then the gradient of the solution is bounded regardless of the distance between inclusions, but the second and higher derivatives may blow up as the distance tends to zero if one of conductivities is $0$ and the other $\infty$. This result holds for both homogenous and inhomogeneous problems. We prove these results by precise quantitative estimates of the derivatives of the solution. We also show by examples that the estimates are optimal.
\end{abstract}

\noindent{\footnotesize{\bf AMS subject classifications}. 35J15 (primary), 35B65 (secondary)}

\noindent{\footnotesize{\bf Key words}. Transmission problem, high contrast, conductivity equation, optimal estimates, circular inclusion, stress, field enhancement, spectrum, Neumann-Poincar\'e operator}

\section{Introduction}

During last three decades or so, there has been significant progress in quantitative analysis of the field concentration (field enhancement or stress) in the narrow region between two inclusions for the homogeneous and inhomogeneous transmission problems of conductivity equations (see, for example, \cite{AKLLL, AKL, BLY, DL, LV, Yun}). In this paper we revisit the problem by exploiting the spectral nature residing behind the phenomenon of the field concentration when inclusions are circular. Through the spectral approach we not only gain new insights on the existing results but also obtain significant new results. We show that when $(k_1-1)(k_2-1) <0$, where $k_1, \ k_2$ are conductivities of inclusions and $1$ is conductivity of the background, have different signs (typically, $k_1=0$ and $k_2=\infty$), then the gradient of the solution is bounded regardless of the distance between inclusions, but the second and higher derivatives may blow up as the distance tends to zero. We show that this result holds for both homogeneous and inhomogeneous problems.

We now state the main results of this paper in a precise manner and then compare them with existing results. Let $D_1$ and $D_2$ be disks in $\Rbb^2$ which are separated by the small distance denoted by $\Ge$. Let $D:=D_1 \cup D_2$. Let $k_j$ be the conductivity of $D_j$ for $j=1,2$, while that of $\Rbb^2 \setminus D$ is assumed to be $1$. So the conductivity distribution is given by
\beq
\Gs = \chi_{\Rbb^2 \setminus D} + k_1 \chi_{D_1}  + k_2 \chi_{D_2},
\eeq
where $\chi$ denotes the characteristic function on the respective set. Assuming that $0<k_j \neq 1 <\infty$ ($j=1,2$), we consider the inhomogeneous transmission problem: for a given function $f$
\beq\label{NCE}
\begin{cases}
	\nabla \cdot \Gs \nabla u = f \quad &\text{in } \Rbb^2, \\
	u(x) = c \ln |x| + O(|x|^{-1}) \quad &\text{as } |x| \to \infty
\end{cases}
\eeq
for some constant $c$. We also consider the homogeneous transmission problem:
\beq\label{HCE}
\begin{cases}
\nabla \cdot \Gs \nabla u = 0 \quad &\text{in } \Rbb^2, \\
u(x) - H(x) = O(|x|^{-1}) \quad &\text{as } |x| \to \infty,
\end{cases}
\eeq
where $H$ is a given function harmonic in $\Rbb^2$.

Note that there are three parameters involved in the problem \eqnref{NCE}: conductivities $k_1$, $k_2$, and the distance $\Ge$ between two inclusions. The problem is to derive estimates for derivatives of $u$ in terms of these parameters when $\Ge$ tends to $0$.

We assume that the inhomogeneity $f$ is given by $f=\nabla \cdot g$ for some $g$. It is assumed that $g$ is compactly supported in $\Rbb^2$, which is just for the sake of simplicity and can be replaced with some integrability condition. We also impose some regularity condition on $g$: $g$ is piecewise $C^{n, \Ga}$ for some non-negative integer $n$ and $0<\Ga<1$. The function $g$ being piecewise $C^{n, \Ga}$ in this paper means that $g$ is $C^{n,\Ga}$ on $\ol{D_1}$, $\ol{D_2}$ and $\Rbb^2 \setminus D$ separately. For piecewise $C^{n, \Ga}$ functions $g$, the norm is defined by
\beq\label{Cknorm}
\| g \|_{n,\Ga}:= \|g\|_{C^{n,\Ga}(\ol{D_1})} +  \|g\|_{C^{n,\Ga}(\ol{D_2})}+ \|g\|_{C^{n,\Ga}(\Rbb^2 \setminus D)}.
\eeq
When $\Ga=0$, we denote it by $\| g \|_{n,0}$. We also use the following norm:
\beq\label{Ckstarnorm}
\| g \|_{n,\Ga}^*:= \frac{1}{k_1} \|g\|_{C^{n,\Ga}(\ol{D_1})} +  \frac{1}{k_2} \|g\|_{C^{n,\Ga}(\ol{D_2})}+ \|g\|_{C^{n,\Ga}(\Rbb^2 \setminus D)}.
\eeq
We also denote by $\| u \|_{n,\GO}$ the piece-wise $C^n$ norm on $\GO$ when $\GO$ is a bounded set containing $\ol{D_1 \cup D_2}$.

When $(k_1-1)(k_2-1) >0$, we obtain the following theorems for the inhomogeneous and homogeneous transmission problems. Here and throughout this paper, we put
\beq\label{notation2}
r_*:= \sqrt{\frac{2(r_1+r_2)}{r_1 r_2}}
\eeq
for ease of notation ($r_j$ is the radius of $D_j$, $j=1,2$). We also put
\beq\label{Glj}
\Gl_j= \frac{k_j+1}{2(k_j-1)}, \quad j=1,2.
\eeq

\begin{theorem}\label{thm:main1}
Suppose $(k_1-1)(k_2-1) >0$ and $f=\nabla \cdot g$ for some piecewise $C^{n-1, \Ga}$ function $g$ with the compact support ($n$ is a positive integer and $0<\Ga<1$). There is a constant $C>0$ independent of $k_1$, $k_2$, $\Ge$ and $g$ such that the solution $u$ to \eqnref{NCE} satisfies
\beq\label{mainest1}
\| u \|_{n,0} \le C \| g \|_{n-1,\Ga}^* \left( 4\Gl_1\Gl_2-1 + r_* \sqrt{\Ge} \right)^{-n}.
\eeq
This estimate is optimal in the sense that there is $f$ such that the reverse inequality (with a different constant $C$) holds when $n=1$.
\end{theorem}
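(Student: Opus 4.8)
The plan is to reduce the transmission problem to an integral equation on the boundary $\partial D = \partial D_1 \cup \partial D_2$ governed by the Neumann--Poincar\'e (NP) operator, and then exploit the explicit spectral decomposition of the NP operator for two disks. First I would represent the solution as a sum of a particular solution (the Newtonian potential of $g$, or more precisely the solution in the absence of inclusions) and a single-layer potential $\Scal_{\partial D}[\varphi]$ with an unknown density $\varphi$ on $\partial D$. Imposing the transmission conditions across $\partial D_1$ and $\partial D_2$ turns \eqnref{NCE} into an equation of the form $\big(\Gl_{\Gs} I - \Kcal^*_{\partial D}\big)[\varphi] = \psi$, where $\Kcal^*_{\partial D}$ is the adjoint NP operator, $\Gl_{\Gs}$ is a conductivity-dependent spectral parameter built from $\Gl_1,\Gl_2$ in \eqnref{Glj}, and $\psi$ is determined by $g$ with $\|\psi\|$ controlled by $\|g\|_{n-1,\Ga}^*$ (the starred norm appears because the jump data on $\partial D_j$ carries a factor $1/k_j$).

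Next I would invoke the known spectral resolution of $\Kcal^*_{\partial D}$ for two separated disks: in a suitable basis (adapted to bipolar/Möbius coordinates, or to the eigenfunctions of the two-disk NP operator), the operator is essentially diagonalized and its spectrum is an explicit sequence accumulating at $0$, with the spectral gap governed by the quantity $4\Gl_1\Gl_2 - 1 + r_*\sqrt{\Ge}$ — this is exactly where the geometric factor $r_* = \sqrt{2(r_1+r_2)/(r_1 r_2)}$ from \eqnref{notation2} and the distance $\Ge$ enter. The key quantitative point is a lower bound on $\operatorname{dist}\big(\Gl_\Gs, \operatorname{spec}(\Kcal^*_{\partial D})\big)$ of the order of $4\Gl_1\Gl_2 - 1 + r_*\sqrt{\Ge}$, valid uniformly in $k_1,k_2,\Ge$ under the sign condition $(k_1-1)(k_2-1)>0$ (which is what keeps $\Gl_\Gs$ on the correct side of the spectrum). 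From this, $\|(\Gl_\Gs I - \Kcal^*_{\partial D})^{-1}\| \lesssim (4\Gl_1\Gl_2-1+r_*\sqrt{\Ge})^{-1}$, giving the $n=1$ case after elliptic (single-layer) regularity converts the density estimate back to a gradient estimate on $u$.

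To get the $n$-th power in \eqnref{mainest1}, I would argue inductively: differentiating the integral equation (or, more cleanly, working with the expansion of $\varphi$ in the two-disk eigenbasis and tracking how each successive derivative of the potential picks up one more factor of the resolvent norm) shows that the $n$-th derivative of $u$ near the gap region is controlled by $n$ applications of $(\Gl_\Gs I - \Kcal^*_{\partial D})^{-1}$, hence by $(4\Gl_1\Gl_2-1+r_*\sqrt{\Ge})^{-n}$, while each step only costs a harmless constant depending on the geometry and on $\|g\|_{n-1,\Ga}^*$ through standard $C^{n-1,\Ga}$-to-$C^{n}$ mapping properties of the layer potentials. The main obstacle I anticipate is making the spectral-gap lower bound genuinely uniform in \emph{all three} parameters simultaneously: one must show that the relevant eigenvalue of $\Kcal^*_{\partial D}$ closest to $\Gl_\Gs$ behaves like $\tfrac12 - c\,r_*\sqrt{\Ge}$ (or its negative) up to higher-order terms in $\Ge$, with the constant independent of the conductivities, and that no other part of the spectrum sneaks closer to $\Gl_\Gs$ as $k_1k_2$ ranges over the allowed region. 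The optimality claim for $n=1$ would then be handled separately by choosing $f$ (equivalently $g$) so that $\psi$ is essentially the eigenfunction realizing the spectral gap, forcing $\|\nabla u\|$ to saturate the bound from below.
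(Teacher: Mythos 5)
Your first two paragraphs track the paper's strategy closely: reduce \eqnref{NCE} to the integral equation \eqnref{Gvfg} with the NP operator $\Kbb_{\p D}^*$, use the explicit two-disk spectrum (Lemma \ref{starspec} transported by the M\"obius unitary $U$ of Lemma \ref{single_star}), and identify the relevant spectral gap with $4\Gl_1\Gl_2-\Gr^{2|n|}$, which under \eqnref{Grasymp} behaves like $4\Gl_1\Gl_2-1+r_*\sqrt{\Ge}$. That part is sound and is exactly Lemma \ref{lem:intsol} together with \eqnref{Grasymp}.

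The genuine gap is in your passage from the $n=1$ resolvent estimate to the $n$-th power in \eqnref{mainest1}. You claim that ``each successive derivative of $u$ picks up one more factor of the resolvent norm'' by differentiating the integral equation or ``tracking how each derivative picks up one more factor.'' This is not how the exponent $n$ actually arises, and the inductive scheme you sketch does not obviously close: if you differentiate \eqnref{Gvfg} tangentially, you must control the commutator $[\p_\Gt, \Kbb_{\p D}^*]$, and the off-diagonal kernels $\p_\nu\Scal_{\p D_j}[\cdot]|_{\p D_i}$ ($i\neq j$) have derivatives that blow up as $\Ge\to 0$ in the gap region, so there is no uniform-in-$\Ge$ commutator bound to feed the induction. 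The paper avoids this entirely. The actual mechanism, after the spectral solution of \eqnref{Gvfg}, is to expand the resolvent coefficient geometrically, $\frac{1}{4\Gl_1\Gl_2-\Gr^{2m}}=\sum_{l\ge0}\Gr^{2lm}/(4\Gl_1\Gl_2)^{l+1}$ (valid because $(k_1-1)(k_2-1)>0$ gives $4\Gl_1\Gl_2>1$), which converts the solution into an explicit series of iterated M\"obius compositions of the boundary data (Proposition \ref{prop:rep}, formulas \eqnref{w1def2}--\eqnref{w2def2}). The crucial quantitative input is then Lemma \ref{lem:est1}: the $m$-th derivative of the $l$-th reflection map $\Phi_{l,t}$ is $O(\Gr^{2l}(l+1)^m)$, so the $(n+1)$-st derivative of the series is dominated by $\sum_l \Gr^{2l}(l+1)^{n+1}/(4\Gl_1\Gl_2)^{l+1}\lesssim (4\Gl_1\Gl_2-\Gr)^{-(n+1)}$. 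In other words, the exponent $n$ comes from the weight $(l+1)^n$ produced by $n$ chain-rule differentiations of the image-charge maps, not from iterating the resolvent. Your proposal does not contain this idea, and without it you cannot obtain the higher-order estimate.

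Two smaller omissions. First, before any of this one must reduce to the case $\int_{D_1}f=\int_{D_2}f=0$ so that the jump data $\p_\nu F$ lies in $\Hcal_0^{-1/2}(\p D)$; the paper does this by subtracting the explicit corrector fields $V_1,V_2$ (Lemmas \ref{V1v1}--\ref{V2v2}, Proposition \ref{thm: decomp}), and this is also where the $1/k_j$ weights in the starred norm \eqnref{Ckstarnorm} are produced. Second, passing from the density $\Gvf$ to $C^n$ estimates on $u$ is not just ``elliptic regularity for the single layer potential''; the paper instead represents $u$ directly through the analytic functions $w_j$ and the inverse M\"obius map $T^{-1}$, so that derivative estimates on $u$ reduce to derivative estimates on compositions with $\Phi_{l,t}$ and $T$, and a Calder\'on--Zygmund bound \eqnref{CZest}--\eqnref{H1est} controls the Taylor data $h_j$ by $\|g\|_{n-1,\Ga}^*$. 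Your blind proposal leaves both of these steps implicit, and the second is precisely where the $C^{n-1,\Ga}\to C^n$ gain is justified.

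Your discussion of optimality for $n=1$ is consistent with what the paper does in Section \ref{sec:opti}.
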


\begin{theorem}\label{thm:main4}
Let $\GO$ be a bounded set containing $\ol{D_1 \cup D_2}$. Let $u$ be the solution to \eqnref{HCE}. If $(k_1-1)(k_2-1) >0$, then there is a constant $C>0$ independent of $k_1$, $k_2$, $\Ge$ and the function $H$ such that
\beq\label{mainest4-1}
\| u \|_{n,\GO} \le C \| H \|_{C^n (\GO)} \left( 4\Gl_1\Gl_2-1 + r_* \sqrt{\Ge} \right)^{-n} .
\eeq
This estimate is optimal in the sense that there is a harmonic function $H$ such that the reverse inequality (with a different constant $C$) holds for the case $n=1$.
\end{theorem}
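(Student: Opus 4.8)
The plan is to derive the upper bound \eqnref{mainest4-1} from Theorem~\ref{thm:main1} by localising the harmonic data $H$ near $D$, and to obtain the optimality assertion by an explicit computation in bipolar coordinates. For the reduction, fix an open set $U'$ with $\ol{D_1\cup D_2}\subset U'\Subset\GO$ (this can be arranged uniformly in $\Ge$, since the two disks, though nearly touching, remain in a fixed compact subset of $\GO$) and a cut-off $\Gvf\in C_c^\infty(U')$ with $\Gvf\equiv1$ on a neighbourhood of $\ol{D_1\cup D_2}$. Set $w:=u-\Gvf H$. Since $H$ is entire harmonic and $u-H=O(|x|^{-1})$ at infinity, $w(x)=O(|x|^{-1})$, while $\nabla\cdot\Gs\nabla w=-\nabla\cdot\Gs\nabla(\Gvf H)=-\nabla\cdot g$ with $g:=\Gs\nabla(\Gvf H)$. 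The field $g$ is piecewise $C^\infty$ with support in $\ol{U'}$: it equals $k_j\nabla H$ on $\ol{D_j}$ (there $\Gvf\equiv1$ and $H$ is harmonic, so no zeroth-order term survives) and $\nabla(\Gvf H)$ on $\Rbb^2\setminus D$. Hence $w$ solves \eqnref{NCE} with $f=-\nabla\cdot g$ and $c=0$, and Theorem~\ref{thm:main1} (applied with, say, $\Ga=\tfrac12$) gives $\|w\|_{n,0}\le C\,\|g\|_{n-1,1/2}^*\,(4\Gl_1\Gl_2-1+r_*\sqrt{\Ge})^{-n}$.

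The weighted norm \eqnref{Ckstarnorm} is exactly what is needed: the prefactor $1/k_j$ cancels the conductivity in $g|_{D_j}=k_j\nabla H$, so that $\frac1{k_j}\|g\|_{C^{n-1,1/2}(\ol{D_j})}=\|\nabla H\|_{C^{n-1,1/2}(\ol{D_j})}$, which by interior (analytic) estimates for the harmonic function $H$ on $U'\Subset\GO$ is $\le C\|H\|_{L^\infty(\GO)}\le C\|H\|_{C^n(\GO)}$; likewise $\|g\|_{C^{n-1,1/2}(\Rbb^2\setminus D)}=\|\nabla(\Gvf H)\|_{C^{n-1,1/2}}\le C_\Gvf\|H\|_{C^n(\GO)}$. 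Thus $\|g\|_{n-1,1/2}^*\le C_\GO\|H\|_{C^n(\GO)}$ with $C_\GO$ depending only on $\GO$ (through $U'$ and $\Gvf$), not on $k_1,k_2,\Ge,H$. Since $u=w+\Gvf H$ on $\GO$ and $\|\Gvf H\|_{n,\GO}\le C_\GO\|H\|_{C^n(\GO)}$, we obtain \eqnref{mainest4-1} (using that $4\Gl_1\Gl_2-1+r_*\sqrt\Ge$ is bounded in the contrast range of interest, so that the contribution of $\Gvf H$ is absorbed into the right-hand side).

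For the optimality claim ($n=1$) I would pass to the bipolar coordinates $(\Gx,\Gt)$ used in the spectral analysis: after a M\"obius change of variables $\p D_1$ and $\p D_2$ become the symmetric circles $\{\Gx=\pm\Ga\}$, where the parameter $\Ga$ satisfies $2\Ga=r_*\sqrt{\Ge}+O(\Ge)$ as $\Ge\to0$ (this is where the constant $r_*$ of \eqnref{notation2} enters). Take $H$ to be a suitable linear function of $x$, chosen so that its normal data on $\p D$ has a nonvanishing projection onto the resonant bipolar mode, whose resonance denominator is $\Gl_1\Gl_2-\tfrac14 e^{-2\Ga}$. Solving mode by mode, the amplitude of the singular part of $u$ is then $\asymp(\Gl_1\Gl_2-\tfrac14 e^{-2\Ga})^{-1}$, and since $4(\Gl_1\Gl_2-\tfrac14 e^{-2\Ga})=4\Gl_1\Gl_2-e^{-2\Ga}=4\Gl_1\Gl_2-1+2\Ga+O(\Ga^2)\asymp 4\Gl_1\Gl_2-1+r_*\sqrt{\Ge}$, evaluating $|\nabla u|$ at the point of $\GO$ between the two disks yields $\|u\|_{1,\GO}\ge C'\|H\|_{C^1(\GO)}(4\Gl_1\Gl_2-1+r_*\sqrt{\Ge})^{-1}$. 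This $H$ is the harmonic counterpart of the extremal source constructed in the optimality part of Theorem~\ref{thm:main1}.

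The main obstacle is this last lower bound: one must check that the chosen $H$ excites the resonant bipolar mode with an amplitude bounded below \emph{uniformly} in $k_1,k_2$ and $\Ge$, and that this amplitude is not destroyed by differentiating $u$ or by pulling back through the M\"obius map. By contrast, the localisation step, the mapping properties of the single-layer potential in piecewise H\"older norms, and the asymptotics $2\Ga=r_*\sqrt{\Ge}+O(\Ge)$ are routine, given Theorem~\ref{thm:main1} and the spectral decomposition of the Neumann-Poincar\'e operator established earlier in the paper.
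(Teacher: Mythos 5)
Your reduction of Theorem~\ref{thm:main4} to Theorem~\ref{thm:main1} fails at the decay step. With $\Gvf\in C_c^\infty(U')$ and $\Gvf\equiv 1$ near $\ol{D_1\cup D_2}$, the function $w:=u-\Gvf H$ equals $u$ outside $\mathrm{supp}\,\Gvf$, so for large $|x|$ one has $w(x)=u(x)=H(x)+O(|x|^{-1})$; this is \emph{not} $c\ln|x|+O(|x|^{-1})$ unless $H$ is constant, since $H$ is an entire harmonic function and generically grows polynomially. Hence $w$ does not solve \eqnref{NCE} and Theorem~\ref{thm:main1} cannot be applied to it. The obvious repairs do not help: if you instead put $\Gvf\equiv 0$ near $D$ and $\Gvf\equiv 1$ near infinity (so that $w=u-\Gvf H$ does decay), then $g=\Gs\nabla(\Gvf H)$ equals $\nabla H$ outside a compact set, so $\|g\|^*_{n-1,\Ga}$ involves the global H\"older norm of $\nabla H$, which is not controlled by $\|H\|_{C^n(\GO)}$ (take $H=\Re z^m$ with $m$ large); and simply taking $w=u-H$ makes $g=\Gs\nabla H$ non-decaying and $F$ in \eqnref{Fdef} ill-defined. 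So the localisation cannot be arranged so that $w$ decays \emph{and} $g$ is compactly supported with norm controlled by $\|H\|_{C^n(\GO)}$.

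The paper avoids this entirely: both \eqnref{NCE} and \eqnref{HCE} are reduced to the common interface problem \eqnref{mainproblem}, with $\Gn_j=\p_\nu F|_{\p D_j}$ in the first case and $\Gn_j=\p_\nu H|_{\p D_j}$ in the second (Corollaries~\ref{cor:repin} and \ref{cor:repho}). The spectral representation of Proposition~\ref{prop:rep} therefore applies verbatim to the homogeneous problem, with the auxiliary harmonic function $H_j$ of \eqnref{bvp} replaced by the given $H$ itself; estimate \eqnref{6000} then gives directly $\|\nabla(u-H)\|_{n-1,0}\lesssim \|H\|_{C^n(\GO)}(4\Gl_1\Gl_2-\Gr)^{-n}$ (the $H_j$ appearing there become $H$, and $\|H\|_{C^n(\ol{D_j})}\le\|H\|_{C^n(\GO)}$), after which \eqnref{Grest} yields \eqnref{mainest4-1}. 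This is strictly simpler than routing through Theorem~\ref{thm:main1}, and it is why the paper disposes of Theorems~\ref{thm:main4} and \ref{thm:main5} in one remark at the end of Section~\ref{sec:pfs}. For the optimality, your outline is in the right spirit; the paper takes $H(x)=x_1$ and fixes the extreme contrast $k_1=k_2=\infty$ (so $4\Gl_1\Gl_2-1=0$), then computes $\p_z(V\circ T)$ explicitly via \eqnref{hjexample}, \eqnref{pzV} and \eqnref{dGzdz}, obtaining $|\nabla u|\gtrsim \Ge^{-1/2}$; the uniformity issue you flag does not arise because the extremal contrast is fixed.
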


Since
$$
4\Gl_1\Gl_2-1 = \frac{2(k_1+k_2)}{(k_1-1)(k_2-1)},
$$
the estimate \eqnref{mainest1} shows that if either $k_1$ or $k_2$ is finite (away from $0$ and $\infty$), then $\| u \|_{n,0}$ is bounded regardless of the distance $\Ge$, while if both $k_1$ and $k_2$ tend to $\infty$, then the right-hand side of \eqnref{mainest1} is of order ${\Ge}^{-n/2}$. As shown by an example in section \ref{sec:opti}, $\nabla u$ may actually blow up at the order of ${\Ge}^{-1/2}$ . If $k_1$ and $k_2$ tend to $0$, then the right-hand side of \eqnref{mainest1} is also of order ${\Ge}^{-n/2}$ provided that $\| g \|_{n-1,\Ga}^*$ is bounded, in particular, if there is no source in $D_1 \cup D_2$, namely, $g=0$ in $D_1 \cup D_2$. The estimate \eqnref{mainest4-1} yields the same findings.

The estimates \eqnref{mainest1} and \eqnref{mainest4-1} are not new; Here, we derive them using a different method: the spectrum of the Neumann-Poincar\'e operator related to the problems. The estimate \eqnref{mainest1} was obtained in \cite{DL}.
The estimate \eqnref{mainest4-1} has a long history: The estimate for the gradient, namely, for $n=1$, is obtained in \cite{AKLLL, AKL}; that for higher $n$ in \cite{DL}. The results for circular inclusions have been extended to those of more general shape with the conductivity $\infty$ \cite{BLY, KY19, Lekner-PRSA-12, LY-CPDE-09, Yun}, the insulating problem in three dimensions or higher \cite{BLY10, LY, Wein, Yun16}, and to other equations; $p-$Laplacian \cite{GN-MMS-12}, Lam\'e system \cite{BLL-ARMA-15, BLL-AM-17, KY}, and Stokes system \cite{AKKY}. The estimate \eqnref{mainest4-1} says in particular that $\nabla u$ is bounded regardless of the distance between two inclusions if $k_1$ or $k_2$ is finite. This fact is known to be true in a more general setting \cite{BV, LN, LV}. Asymptotic characterizations of the gradient blow-up when $k_1=k_2=\infty$ (or $k_1=k_2=0$) are obtained in \cite{ACKLY13, KLY, KLY15, LY-JMAA-15}.

Theorem \ref{thm:main1} and \ref{thm:main4} (and theorems to follow) are proved by exploiting the spectral nature of the problem \eqnref{NCE}. The problem \eqnref{NCE} can be reduced to the following problem (see the beginning of Section \ref{sec:NP}):
\beq\label{mainproblem}
\begin{cases}
\GD v = 0  \quad&\mbox{in } D \cup (\Rbb^2 \setminus \ol{D}), \\
\ds v|_+ - v|_- = 0  \quad&\mbox{on }\p D,  \\
\ds \p_\nu v|_+ - k_j \p_\nu v|_- = (k_j-1)\Gn_j  \quad&\mbox{on }\p D_j, \ j=1,2, \\
\ds v (x) = c \ln |x| + O( |x|^{-1})~&\mbox{as }|x|\rightarrow\infty
\end{cases}
\eeq
for some constant $c$. Here and throughout this paper, $\p_\nu$ denotes the outward normal derivative on $\p D_j$ and the subscripts $\pm$ denote the limits from outside and inside of $D_j$ respectively. . For the problem \eqnref{NCE}, $\Gn_j$ appearing in \eqnref{mainproblem} is given by $\Gn_j=\p_\nu F|_{\p D_j}$ ($j=1,2$), where $F$ is the (weighted) Newtonian potential of $f$, namely,
\beq\label{Fdef}
F(x)=  \frac{1}{2\pi} \int_{\Rbb^2 \setminus D} \ln|x-y| f(y) \, dy + \sum_{j=1}^2 \frac{1}{2 \pi k_j} \int_{D_j} \ln|x-y| f(y) \, dy
\eeq
for $x \in \Rbb^2$. For the problem \eqnref{HCE}, $\Gn_j=\p_\nu H|_{\p D_j}$.

If we represent the solution to \eqnref{mainproblem} in terms of the single layer potential, then the interface condition on $\p D_j$ for $j=1,2$ (the third line in \eqnref{mainproblem}) is reduced to an integral equation for the Neumann-Poincar\'e (abbreviated by NP) operator on two circles $\p D_1$ and $\p D_2$ (see section \ref{sec:NP}). By solving the integral equation we see explicitly that the gradient and higher derivatives of the solution to \eqnref{mainproblem} is quantitatively estimated in terms of the quantity $4 \Gl_1 \Gl_2-\Gr$, where $\Gr$ is a constant determined by the radii $r_1$ and $r_2$ of inclusions $D_1$ and $D_2$ and the distance $\Ge$ between them such that $\pm \frac{1}{2} \Gr^n$ ($n=1, 2, \ldots$) are eigenvalues of the NP operator (see section \ref{sec:twocircles} and \ref{sec:specrep}). Since $\Gr \sim 1- r_* \sqrt{\Ge}$ (see \eqnref{Grasymp}), we have estimates \eqnref{mainest1} and \eqnref{mainest4-1}. Here and throughout this paper $A \sim B$ for some positive quantities $A$ and $B$ depending on some parameters means that there are positive constants $C_1$ and $C_2$ independent of those parameters such that $C_1 \le A/B \le C_2$. Notation $\lesssim$ and $\gtrsim$ are defined analogously.

If $(k_1-1)(k_2-1) <0$, then $4 \Gl_1 \Gl_2 <0$. Thus the right-hand sides of \eqnref{mainest1} and \eqnref{mainest4-1} are bounded and are not optimal bounds in this case. We obtain the following theorem for this case.

\begin{theorem}\label{thm:main3}
Suppose $(k_1-1)(k_2-1) <0$ and $f=\nabla \cdot g$ for some piecewise $C^{n, \Ga}$ function $g$ with the compact support ($n$ is a positive integer and $0<\Ga<1$). There is a constant $C>0$ independent of $k_1$, $k_2$, $\Ge$ and $g$ such that the solution $u$ to \eqnref{NCE} satisfies
\beq\label{mainest3-1}
\| u \|_{n,0} \le C \| g \|_{n,\Ga}^* \left( 4|\Gl_1 \Gl_2|-1 + r_* \sqrt{\Ge} \right)^{-n+1}.
\eeq
This estimate is optimal in the sense that there is $f$ such that the reverse inequality (with a different constant $C$) holds for $n=2$.
\end{theorem}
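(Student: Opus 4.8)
The strategy is to mirror the proof of Theorem \ref{thm:main1}, but to track precisely where the case $(k_1-1)(k_2-1)<0$ forces a different bound. As indicated before the statement, we reduce \eqnref{NCE} to the transmission problem \eqnref{mainproblem}, represent $v$ via single layer potentials $\Scal_{\p D_1}\varphi_1 + \Scal_{\p D_2}\varphi_2$ on the two circles, and turn the interface conditions on $\p D_j$ into an integral equation of the form $(\lambda I - \mathbb{K}^*)\varphi = \text{data}$, where $\mathbb{K}^*$ is the NP operator on $\p D_1 \cup \p D_2$ and $\lambda$ encodes $\lambda_1,\lambda_2$ via the block structure. The essential point established in sections \ref{sec:twocircles} and \ref{sec:specrep} is that the eigenvalues of $\mathbb{K}^*$ are $\pm\frac12\rho^n$ with $\rho \sim 1 - r_*\sqrt{\varepsilon}$, so the resolvent $(\lambda I - \mathbb{K}^*)^{-1}$, restricted to the $n$-th spectral mode, involves the scalar factor $(4\lambda_1\lambda_2 - \rho^{2n})^{-1}$ (up to the normalization making $4\lambda_1\lambda_2$ appear). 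The whole matter is to read off the correct power of $(4|\lambda_1\lambda_2| - \rho^{2n})$ that controls the $n$-th derivative of $v$.

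**Where the sign matters.** When $(k_1-1)(k_2-1)>0$ we have $\lambda_1,\lambda_2$ of the same sign and $4\lambda_1\lambda_2 > 0$; the resolvent bound on the $n$-th mode is then $\lesssim (4\lambda_1\lambda_2 - \rho^{2n})^{-1}$ and, because $\rho^{2n} \le \rho^2$, one gets $(4\lambda_1\lambda_2 - \rho^2)^{-n}$ after summing the geometric-type series in the modes — this is the source of the exponent $-n$ in \eqnref{mainest1}. When $(k_1-1)(k_2-1)<0$, however, $4\lambda_1\lambda_2 < 0$, so $4\lambda_1\lambda_2 - \rho^{2n} = -(4|\lambda_1\lambda_2| + \rho^{2n})$ is bounded away from zero uniformly in $n$ and in $\varepsilon$; the gradient is therefore bounded, consistent with the known results \cite{BV, LN, LV}. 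The blow-up that can still occur in higher derivatives comes not from the eigenvalue $\pm\frac12\rho$ but from the fact that the data on the right-hand side — built from $\nu_j = \p_\nu F|_{\p D_j}$ and its tangential derivatives — themselves carry singular behaviour near the narrow neck when we differentiate $n$ times, losing up to $n-1$ powers of the distance because each differentiation of the layer potential near the contact point picks up a factor $\sim \varepsilon^{-1/2}$ modulated by $\rho$. Quantifying this is exactly what produces $(4|\lambda_1\lambda_2| - 1 + r_*\sqrt{\varepsilon})^{-n+1}$: one genuine "loss" per derivative beyond the first, with the first derivative costing nothing because of the sign-induced gap.

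**Key steps in order.** (1) Record the reduction \eqnref{NCE} $\to$ \eqnref{mainproblem} and the single-layer representation, with the estimate $\|F\|$-type bounds on the Newtonian potential $F$ of \eqnref{Fdef} in terms of $\|g\|_{n,\Ga}^*$ (the weights $1/k_j$ in \eqnref{Ckstarnorm} are exactly what the coefficient $1/k_j$ in \eqnref{Fdef} demands). (2) Invoke the spectral decomposition of $\mathbb{K}^*$ on two circles from the earlier sections and write $\varphi$ modewise. (3) On each mode $n$, bound the scalar resolvent by $C$ (uniformly, using $4\lambda_1\lambda_2<0$) and bound the norm of the $n$-th derivative of $\Scal_{\p D_j}\varphi_j$ near the neck, where the geometric growth $\rho^{-n}$ of the derivative of the single layer potential competes against the decay supplied by the data; the balance yields the factor $(4|\lambda_1\lambda_2|-1+r_*\sqrt{\varepsilon})^{-(n-1)}$ after using $\rho \sim 1 - r_*\sqrt{\varepsilon}$ from \eqnref{Grasymp} and $4|\lambda_1\lambda_2| - \rho^{2(n-1)} \gtrsim 4|\lambda_1\lambda_2| - 1 + r_*\sqrt{\varepsilon}$. (4) Sum over modes and add the contribution of the smooth far part, which is harmless. (5) For optimality, construct an explicit $f$ (concentrated near the neck, e.g. taking $g$ supported so that only one low mode of the data is excited) for which the mode-$n$ computation is an equality up to constants when $n=2$, so that $\|u\|_{2,0} \gtrsim (4|\lambda_1\lambda_2|-1+r_*\sqrt{\varepsilon})^{-1}$.

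The main obstacle I expect is Step (3): making rigorous the claim that differentiating the layer potential $n$ times near the contact point costs exactly $n-1$ — not $n$ — powers of the spectral gap, i.e. showing that the sign condition buys back precisely one derivative. This requires a careful pointwise analysis of $\nabla^n \Scal_{\p D_j}\varphi_j$ in the neck region, tracking how the bipolar-coordinate series for the NP eigenfunctions behaves under repeated differentiation, and verifying that the worst term is $\sim \rho^{n-1}/(4|\lambda_1\lambda_2|-\rho^{2(n-1)})$ rather than $\rho^n/(4|\lambda_1\lambda_2|-\rho^{2n})$; the bookkeeping of which modes contribute at which order is the delicate part.
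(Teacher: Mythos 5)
Your framework is the paper's framework, and your diagnosis of the crux (the sign condition must somehow ``buy back'' one power of the gap at the cost of one derivative) is exactly right, but the mechanism you propose to realize it is not the one that works, and you have also missed a hypothesis that signals the correct mechanism. Two concrete issues.

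First, you suggest locating the gain mode-by-mode, ``verifying that the worst term is $\sim \rho^{n-1}/(4|\lambda_1\lambda_2|-\rho^{2(n-1)})$ rather than $\rho^n/(4|\lambda_1\lambda_2|-\rho^{2n})$.'' Since $4\lambda_1\lambda_2<0$, every quantity of the form $\rho^{m}/(4|\lambda_1\lambda_2|+\rho^{2m})$ is uniformly bounded; no comparison of that shape produces a factor $(4|\lambda_1\lambda_2|-1+r_*\sqrt{\varepsilon})^{-1}$. In the paper the gain is not extracted from any single mode but from the geometric expansion in $l$: setting $\lambda:=-4\lambda_1\lambda_2\ge1$ and writing $w_1\circ T = -\frac{1}{\lambda}\sum_{l\ge0}\frac{(-1)^{l}}{\lambda^{l}}\,\tilde h_1\circ\Phi_{l,1}$ (cf.\ \eqnref{w1def2} and \eqnref{Phidef}), the series is alternating, and one pairs consecutive terms (an Abel-summation trick). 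One half of the pairing carries the factor $\frac{1}{\lambda^{2i}}-\frac{1}{\lambda^{2i+1}}=\frac{\lambda-1}{\lambda^{2i+1}}$, whose small numerator $\lambda-1$ cancels one power of $\lambda-\rho$; the other half carries $Y_{2i}^1-Y_{2i+1}^1=\int_{\rho^2}^1 \partial_t Y_{2i}^t\,dt$, whose $t$-integration over an interval of length $1-\rho^2\lesssim\sqrt{\varepsilon}$ also cancels one power. Both cancellations use, in an essential way, the derivative bounds on $\Phi_{l,t}$ and $\partial_t\Phi_{l,t}$ from Lemma \ref{lem:est1}; this is the content you label as ``the main obstacle,'' but the correct route is through the series index $l$, not through the spectral mode index $n$.

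Second, your step (4) treats the data as harmless, but the statement of Theorem \ref{thm:main3} requires $g\in C^{n,\Ga}$ and the right-hand side is $\|g\|^*_{n,\Ga}$, whereas Theorem \ref{thm:main1} only needs $g\in C^{n-1,\Ga}$. This extra derivative is not cosmetic: the finite-difference term $Y_{2i}^1-Y_{2i+1}^1=\int_{\rho^2}^1 h_1'(\Phi_{2i,t})\,\partial_t\Phi_{2i,t}\,dt$ involves $h_1'$, so estimating it in $C^{n+1}$ costs one more derivative of $h_1$ (hence of $F$, hence of $g$) than the naive estimate of $h_1\circ\Phi_{l,1}$. Any proof that claims \eqnref{mainest3-1} with only $\|g\|^*_{n-1,\Ga}$ on the right would have a gap precisely here. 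So the correct slogan is: the sign condition trades one derivative of the source for one power of the spectral gap, and the trade is effected through the pairing of the alternating $l$-series, not through a pointwise improvement in a fixed NP mode.
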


The estimate \eqnref{mainest3-1} shows that if $(k_1-1)(k_2-1) <0$, then $\nabla u$ is bounded regardless of the $k_1$, $k_2$ and $\Ge$, provided that $\| g \|_{n,\Ga}^*$ is bounded. But, the $n$th ($n \ge 2$) order derivative may blow up at the rate of $\Ge^{-(n-1)/2}$ if, for example, $k_1=0$ and $k_2=\infty$. The second derivative of $u$ actually blows up at the rate of $\Ge^{-1/2}$ in some case as we show in section \ref{sec:opti}.

We obtain the following theorem for the homogeneous problem similar to Theorem \ref{thm:main3}.

\begin{theorem}\label{thm:main5}
Let $\GO$ be a bounded set containing $\ol{D_1 \cup D_2}$. Let $u$ be the solution to \eqnref{HCE}. If $(k_1-1)(k_2-1) <0$, then there is a constant $C>0$ independent of $k_1$, $k_2$, $\Ge$ and the function $H$ such that
\beq\label{mainest4-2}
\| u \|_{n,\GO} \le C \| H \|_{C^{n+1} (\GO)} \left( 4|\Gl_1 \Gl_2|-1 + r_* \sqrt{\Ge} \right)^{-n+1}.
\eeq
This estimate is optimal in the sense that there is a harmonic function $H$ such that the reverse inequality (with a different constant $C$) holds for $n=2$.
\end{theorem}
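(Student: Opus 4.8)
The plan is to reduce the homogeneous problem \eqnref{HCE} to the reduced transmission problem \eqnref{mainproblem} and then invoke the spectral estimate already established for Theorem \ref{thm:main3}. First I would carry out the reduction exactly as at the beginning of Section \ref{sec:NP}: since $H$ is harmonic, the solution $u$ of \eqnref{HCE} is the solution $v$ of \eqnref{mainproblem} with $c=0$ and boundary data $\Gn_j = \p_\nu H|_{\p D_j}$; no weighted Newtonian potential is involved, so the factor $1/k_j$ that appears in the inhomogeneous bookkeeping does not enter here. The one genuinely new step is to control this data uniformly: because $H$ is harmonic on $\GO \supset \ol{D_1 \cup D_2}$ and the $\p D_j$ are real-analytic, interior elliptic estimates give $\|\Gn_j\|_{C^{n,\Ga}(\p D_j)} \le C \|H\|_{C^{n+1}(\GO)}$ with $C$ depending only on the fixed radii and on $\mathrm{dist}(\ol{D_1 \cup D_2}, \p\GO)$. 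This is the source of the extra derivative on $H$ compared with Theorem \ref{thm:main4}: $\Gn_j$ is already a first-order derivative of $H$.

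Second, I would feed $\Gn_j$ into the spectral analysis of \eqnref{mainproblem} carried out for Theorem \ref{thm:main3}. Representing $v$ by single layer potentials on $\p D = \p D_1 \cup \p D_2$ turns the interface conditions into an integral equation $(\GL - \Kcal^*_{\p D})[\varphi] = (\text{data built from } \Gn_1, \Gn_2)$, where $\Kcal^*_{\p D}$ is the Neumann--Poincar\'e operator of the two circles and $\GL$ is the multiplier $\mathrm{diag}(\Gl_1,\Gl_2)$. Using the explicit eigendecomposition from Sections \ref{sec:twocircles}--\ref{sec:specrep} (eigenvalues $\pm\frac12\Gr^{\,n}$, bipolar eigenfunctions), this decouples into $2\times2$ systems, one per mode $n$, whose determinant is a fixed multiple of $4\Gl_1\Gl_2 - \Gr^{2n}$; the sign condition $(k_1-1)(k_2-1) < 0$, equivalently $\Gl_1\Gl_2 < 0$, is precisely what suppresses one power of the resonant behavior and produces the exponent $-n+1$ rather than $-n$, exactly as in Theorem \ref{thm:main3}. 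Thus, up to replacing $\|g\|_{n,\Ga}^*$ by $\|H\|_{C^{n+1}(\GO)}$ and a routine estimate of $\|u\|_{C^n}$ away from $\p D$ (where $u - H$ is harmonic and $O(|x|^{-1})$), the bound \eqnref{mainest4-2} follows from the argument proving \eqnref{mainest3-1}.

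The main obstacle is the same summation over modes that underlies Theorem \ref{thm:main3}: near the point where $\p D_1$ and $\p D_2$ are closest, $\nabla^n v$ is a series in which each differentiation costs a factor $\sim n$, to be balanced against the geometric factor $\Gr^{2n}$, the decay of the data coefficients coming from $\Gn_j \in C^{n,\Ga}$, and the size of $4|\Gl_1\Gl_2| - 1$, the whole sum having to be controlled uniformly in $k_1, k_2, \Ge$ and then combined with the asymptotics $\Gr \sim 1 - r_* \sqrt\Ge$ from \eqnref{Grasymp}. Finally, optimality for $n = 2$ is shown by taking $H$ linear (or a suitable harmonic polynomial) whose data $\Gn_j$ has a nonzero component along the mode minimizing $|\,4\Gl_1\Gl_2 - \Gr^{2n}|$, and estimating the resulting series from below; this is the computation carried out in Section \ref{sec:opti}.
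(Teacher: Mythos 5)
Your proposal is correct and follows essentially the same route as the paper: the paper reduces \eqref{HCE} to \eqref{mainproblem} with $\Gn_j=\p_\nu H|_{\p D_j}$, uses the same spectral representation as in Proposition \ref{prop:rep} (with $H_j$ replaced by $H|_{D_j}$), and then repeats the argument of Theorem \ref{thm:main3} verbatim, replacing $\|g\|_{n+1,\Ga}^*$ by $\|H\|_{C^{n+1}(\GO)}$ via interior estimates for harmonic functions; optimality is exactly the Section \ref{sec:opti} computation with $H(x)=x_1$, $k_1=0$, $k_2=\infty$.
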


This paper is organized as follows. In section \ref{sec:NP}, we review the spectral theory of the NP operator defined on the boundary of two disjoint inclusions of general shape and prove uniqueness and existence of the solution to \eqnref{mainproblem}. Section \ref{sec:twocircles} is to show M\"obius invariance of the NP spectra and derive the full spectrum on two circles. In section \ref{sec:specrep} we derive representation formulas for the solutions to \eqnref{NCE} and \eqnref{HCE} in terms of NP spectrum. Section \ref{sec:pfs} is to prove Theorem \ref{thm:main1}-\ref{thm:main5} and section \ref{sec:opti} is to show optimality of estimates in those theorems.

\section{The NP operator and well-posedness of the problem}\label{sec:NP}

Let us begin this section by showing how the problem \eqnref{NCE} is reduced to \eqnref{mainproblem}. The problem \eqnref{NCE} can be expressed as
\beq\label{200}
\begin{cases}
\GD u = f  \quad&\mbox{in } \Rbb^2 \setminus \ol{D}, \\
\GD u = k_j^{-1} f  \quad&\mbox{in } D_j, \ j=1,2 , \\
\ds u|_+ - u|_- = 0  \quad&\mbox{on }\p D \\
\ds \p_\nu u|_+ - k_j \p_\nu u|_- = 0  \quad&\mbox{on }\p D_j, \ j=1,2 ,\\
\ds u (x) = c \ln|x| + O( |x|^{-1})~&\mbox{as }|x|\rightarrow\infty.
\end{cases}
\eeq
Let $v:=u-F$ with $F$ given by \eqnref{Fdef}. Since $\GD F= f$ in $\Rbb^2 \setminus \ol{D}$ and $\GD F = k_j^{-1} f$ in $D_j$, $v$ is the solution to \eqnref{mainproblem}. By putting $v:u-H$, we see that \eqnref{HCE} is reduced to \eqnref{mainproblem}.

In this section we show that the problem \eqnref{mainproblem} is reduced to an integral equation of the NP operator. We also prove uniqueness and existence of the solution to \eqnref{mainproblem} for $\Gn_j \in H^{-1/2} (\p D_j)$ for $j=1,2$, and of \eqnref{NCE} as a consequence. Here and afterwards, $H^{-1/2} (\p D_j)$ denotes the Sobolev space of order $-1/2$ on $\p D_j$, and $H^{-1/2}_0 (\p D_j)$ is the subspace of $H^{-1/2} (\p D_j)$ whose element $\Gn_j$ satisfies
\beq
\int_{\p D_j} \Gn_j d\Gs =0.
\eeq
We use $\Hcal^{-1/2}=\Hcal^{-1/2}(\p D)$ and $\Hcal^{-1/2}_0=\Hcal_0^{-1/2}(\p D)$ respectively to denote $H^{-1/2} (\p D_1) \times H^{-1/2} (\p D_2)$ and $H^{-1/2}_0 (\p D_1) \times H^{-1/2}_0 (\p D_2)$ for ease of notation.

\subsection{Neumann-Poincar\'e operator}

Let $\GO$ be a simply connected bounded domain in $\Rbb^2$ with the Lipschitz continuous boundary. For a function $\Gm \in H^{-1/2}(\p \GO)$, the single layer potential $\Scal_{\p \GO} [\Gm]$ is defined by
\beq
\Scal_{\p \GO} [\Gm] (x):= \frac{1}{2\pi} \int_{\p \GO} \ln |x-y| \; \Gm (y) \, d\Gs(y), \quad x \in \Rbb^2.
\eeq
The following formulas for the single layer potential are well known (see, for example, \cite{AmKa07Book2, Fo95}) :
\begin{align}
\Scal_{\p \GO} [\Gm] \big|_+ (x) &= \Scal_{\p \GO} [\Gm] \big|_- (x) \quad \text{ on } \p \GO, \label{singlejump1} \\
\p_\nu \Scal_{\p \GO} [\Gm] \big|_\pm (x) &= \biggl( \pm \frac{1}{2} I + \Kcal_{\p \GO}^* \biggr) [\Gm] (x) \quad \text{ on } \p \GO. \label{singlejump2}
\end{align}
Here the operator $\Kcal_{\p \GO}^*$ is defined by
\beq\label{introkd}
\Kcal_{\p \GO}^* [\Gm] (x):=
\frac{1}{2\pi}\int_{\p \GO} \frac{(x-y) \cdot \nu_x}{|x-y|^2} \; \Gm(y)\,d\Gs(y), \quad x \in \p  \GO,
\eeq
and is called the NP operator on $\p \GO$.

The NP operator $\Kcal_{\p \GO}^*$ is not self-adjoint on $L^2(\p\GO)$ unless $\p \GO$ is a circle. However it can be realized as a self-adjoint operator on $H^{-1/2}(\p\GO)$ using the Plemelj's symmetrization principle:
\beq
\Scal_{\p \GO} \Kcal_{\p \GO}^* = \Kcal_{\p \GO} \Scal_{\p \GO}
\eeq
(see \cite{KPS}). The NP operator $\Kcal^*_{\p \GO}$ is compact if $\p \GO $ is $C^{1,\Ga}$, and hence its eigenvalues are real, of finite multiplicities, and accumulate to 0.

We now consider the single layer potential on $\p D= \p D_1 \cup \p D_2$. For this section we assume that $D_1$ and $D_2$ are bounded domains with Lipschitz continuous boundaries for some $\Ga >0$. For $\Gvf := (\Gvf_1, \Gvf_2)^T \in \Hcal^{-1/2}(\p D)$, let
$$
\Scal_{\p D}[\Gvf](x) := \Scal_{\p D_1}[\Gvf_1](x) +  \Scal_{\p D_2}[\Gvf_2](x), \quad x \in \Rbb^2.
$$
We seek a solution to \eqnref{mainproblem} (assuming that $\Gn =(\Gn_1,\Gn_2)^T \in \Hcal_0^{-1/2}(\p D)$) in the form of
\beq\label{inteqn1}
u(x) := \Scal_{\p D}[\Gvf](x)
\eeq
for some $\Gvf \in \Hcal_0^{-1/2}(\p D)$.

It follows from the jump relation \eqnref{singlejump2} and the jump condition of the flux (the third line in \eqnref{mainproblem}) that $\Gvf$ should satisfy the following integral equations on $\p D_1$ and $\p D_2$, respectively:
$$
(1/2I+\Kcal_{\p D_1}^*)[\Gvf_1] + \p_{\nu} \Scal_{\p D_2}[\Gvf_2]
- k_1 \big( (-1/2I+\Kcal_{\p D_1}^*)[\Gvf_1] + \p_\nu \Scal_{\p D_2}[\Gvf_2] \big) = (k_1-1)\Gn_1,
$$
and
$$
\p_\nu \Scal_{\p D_1}[\Gvf_1] + (1/2I+ \Kcal_{\p D_2}^*)[\Gvf_2] - k_2 \big(
\p_{\nu} \Scal_{\p D_1}[\Gvf_1] + (-1/2I+ \Kcal_{\p D_2}^*)[\Gvf_2] \big) = (k_2-1)\Gn_2.
$$
These integral equations can be expressed in a short form as follows:
\beq\label{Gvfg}
\left( \GL - \Kbb_{\p D}^* \right) [\Gvf] = \Gn,
\eeq
where
\beq
\GL:= \begin{bmatrix} \Gl_1 I & 0 \\ 0 & \Gl_2 I \end{bmatrix}
\eeq
($\Gl_j$ is the constant defined by \eqnref{Glj} and $I$ is the identity operator), and
\beq
\Kbb_{\p D}^* \begin{bmatrix} \Gvf_1 \\ \Gvf_2 \end{bmatrix} = \begin{bmatrix} \Kcal_{\p D_1}^*[\Gvf_1] & \p_{\nu} \Scal_{\p D_2}[\Gvf_2]|_{\p D_1} \\
\p_{\nu} \Scal_{\p D_1}[\Gvf_1]|_{\p D_2}  & \Kcal_{\p D_2}^*[\Gvf_2] \end{bmatrix}.
\eeq
The operator $\Kbb_{\p D}^*$ is the NP operator associated with $\p D=\p D_1 \cup \p D_2$.

Let, for $i,j=1,2$, $S_{ij}$ be the operator defined by
\beq
S_{ij} [\Gm]:= \Scal_{\p D_j}[\Gm] |_{\p D_i}, \quad \Gm \in H^{-1/2}(\p D_j),
\eeq
where the integration is carried out over $\p D_j$ and the integral is evaluated on $\p D_i$. Then $S_{ij}$ maps $H^{-1/2} (\p D_j)$ into $H^{1/2} (\p D_i)$. We then define
\beq
\Sbb_{\p D}: = \begin{bmatrix} S_{11} & S_{12} \\ S_{21} & S_{22} \end{bmatrix} .
\eeq
Then $\Sbb_{\p D}$ is a bounded operator from $\Hcal^{-1/2}$ into $\Hcal^{1/2}:= H^{1/2} (\p D_1) \times H^{1/2} (\p D_2)$.

The following analogy of the Plemelj's symmetrization principle was proved in \cite{ACKLM13}:
\beq\label{Ple}
\Sbb_{\p D} \Kbb_{\p D}^* = \Kbb_{\p D} \Sbb_{\p D}.
\eeq
We define a bilinear form on $\Hcal^{-1/2}$ by
\beq\label{biform}
\la \Gvf, \psi \ra_{\p D}:= - (\Gvf, \Sbb_{\p D}[\psi]),
\eeq
where the right-hand side is the duality pairing between $\Hcal^{-1/2}$ and $\Hcal^{1/2}$. Then $\la \cdot, \cdot \ra_{\p D}$ is actually an inner product on $\Hcal_0^{-1/2}$ and $\Kbb_{\p D}^*$ is compact and self-adjoint on $\Hcal^{-1/2}$, namely,
$$
\la \Gvf, \Kbb_{\p D}^*[\psi] \ra_{\p D} = \la \Kbb_{\p D}^*[\Gvf], \psi \ra_{\p D},
$$
which is due to \eqnref{Ple}.

Then we have the following lemma. It is convenient to use the following notation: for a function $u$ on $\Rbb^2$, $\p_\nu u|_{\p D_i \pm}$ denotes the trace of $\p_\nu u$ on $\p D_i$ from outside and inside of $D_i$, respectively, and
\beq
\p u|_{\pm} := \begin{bmatrix} \p_\nu u|_{\p D_1 \pm} \\
\p_\nu u|_{\p D_2 \pm} \end{bmatrix}.
\eeq
If $u(x)= \Scal_{\p D}[\Gvf](x)$ for some $\Gvf \in \Hcal^{-1/2}$, then
\beq
\p u|_\pm = (\pm 1/2I + \Kbb_{\p D}^*)[\Gvf].
\eeq

\begin{lemma}\label{lem:1/2}
Let
\beq
E_{1/2}:= \{ \psi \in \Hcal^{-1/2} : \la \psi, \Gvf \ra_{\p D}=0 \text{ for all $\Gvf \in \Hcal_0^{-1/2}$} \},
\eeq
so that $\Hcal^{-1/2}= \Hcal_0^{-1/2} \oplus E_{1/2}$.
\begin{itemize}
\item[(i)] Eigenvalues of $\Kbb_{\p D}^*$ on $\Hcal_0^{-1/2}$ belongs to $(-1/2,1/2)$.
\item[(ii)] $\GL - \Kbb_{\p D}^*$ is invertible on $\Hcal_0^{-1/2}(\p D)$.
\item[(iii)] $E_{1/2}$ is the eigenspace of $\Kbb_{\p D}^*$ corresponding to the eigenvalue $1/2$ and consists of $\psi \in \Hcal^{-1/2}$ such that $\Scal_{\p D}[\psi]$ is constant on $D_1$ and $D_2$. In particular, $E_{1/2}$ is of two dimensions.
\end{itemize}
\end{lemma}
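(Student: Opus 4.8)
The plan is to derive all three parts from a single energy identity for the single layer potential, supplemented by the Fredholm alternative and a uniqueness argument for \eqnref{mainproblem}. Write $\Kbb^{*}=\Kbb_{\p D}^{*}$, $\Sbb=\Sbb_{\p D}$, $\la\cdot,\cdot\ra=\la\cdot,\cdot\ra_{\p D}$, $\Omega_{e}=\Rbb^{2}\setminus\ol{D}$, and recall that $\Kbb^{*}$ is compact and self-adjoint for $\la\cdot,\cdot\ra$ and maps $\Hcal_{0}^{-1/2}$ into itself (the last by $\int_{\p D_{i}}(\Kbb^{*}[\Gvf])_{i}=\int_{\p D_{i}}\p_{\nu}\Scal_{\p D}[\Gvf]|_{+}=0$ when $\int_{\p D}\Gvf=0$). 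The key step is: if $\Gvf=(\Gvf_{1},\Gvf_{2})\in\Hcal^{-1/2}$ satisfies $\int_{\p D_{1}}\Gvf_{1}+\int_{\p D_{2}}\Gvf_{2}=0$ and $u=\Scal_{\p D}[\Gvf]$, then $u$ is harmonic in $D_{1}$, $D_{2}$, $\Omega_{e}$ and $u=O(|x|^{-1})$ at infinity, so Green's identity on $D_{1}$, $D_{2}$ and on $\Omega_{e}\cap\{|x|<R\}$ (letting $R\to\infty$), combined with $\p u|_{\pm}=(\pm\tfrac12 I+\Kbb^{*})[\Gvf]$, $u|_{\p D}=\Sbb[\Gvf]$, the definition $\la\Gvf,\Gvf\ra=-(\Gvf,\Sbb[\Gvf])$ and self-adjointness of $\Sbb$, yields
$$
\int_{\Omega_{e}}|\nabla u|^{2}=\tfrac12\la\Gvf,\Gvf\ra+\la\Kbb^{*}[\Gvf],\Gvf\ra
\quad\text{and}\quad
\int_{D_{1}}|\nabla u|^{2}+\int_{D_{2}}|\nabla u|^{2}=\tfrac12\la\Gvf,\Gvf\ra-\la\Kbb^{*}[\Gvf],\Gvf\ra .
$$

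For (i) I would take $\Gvf\in\Hcal_{0}^{-1/2}$, $\Gvf\neq0$, with $\Kbb^{*}[\Gvf]=\Gl\Gvf$; since $\la\cdot,\cdot\ra$ is an inner product on $\Hcal_{0}^{-1/2}$ we have $\la\Gvf,\Gvf\ra>0$, and the two identities give at once $\Gl\in[-\tfrac12,\tfrac12]$. To exclude the endpoints: if $\Gl=-\tfrac12$ then $\int_{\Omega_{e}}|\nabla u|^{2}=0$, so $u$ is constant on $\Omega_{e}$, hence $u\equiv0$ there by the decay, hence $u|_{\p D_{j}}=0$, hence (harmonicity in $D_{j}$) $u\equiv0$, so $\Gvf=\p u|_{+}-\p u|_{-}=0$, a contradiction. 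If $\Gl=\tfrac12$ then $\nabla u=0$ in $D_{1}$ and $D_{2}$, so $u\equiv c_{j}$ on $D_{j}$ and, by continuity of the single layer potential, $u|_{\p D_{j}}=c_{j}$; moreover $\int_{\p D_{j}}\p_{\nu}u|_{+}=\int_{\p D_{j}}\Gvf_{j}+\int_{D_{j}}\GD u=0$ (here $\Gvf\in\Hcal_{0}^{-1/2}$ is used), so $\int_{\Omega_{e}}|\nabla u|^{2}=-\sum_{j}c_{j}\int_{\p D_{j}}\p_{\nu}u|_{+}=0$ and again $u\equiv0$, $\Gvf=0$, a contradiction. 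Hence every eigenvalue on $\Hcal_{0}^{-1/2}$ lies in $(-\tfrac12,\tfrac12)$.

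For (ii) I would note that $\GL$ is bounded and invertible on $\Hcal_{0}^{-1/2}$ (since $\Gl_{j}\neq0$) and $\Kbb^{*}$ is compact, so $\GL-\Kbb^{*}=\GL(I-\GL^{-1}\Kbb^{*})$ is Fredholm of index zero; it thus suffices to prove injectivity. If $(\GL-\Kbb^{*})[\Gvf]=0$ with $\Gvf\in\Hcal_{0}^{-1/2}$, set $v=\Scal_{\p D}[\Gvf]$; by \eqnref{singlejump2} and the definitions of $\GL$ and $\Kbb^{*}$ one computes $\p_{\nu}v|_{+}-k_{j}\p_{\nu}v|_{-}=(1-k_{j})\big((\Kbb^{*}-\GL)[\Gvf]\big)_{j}=0$ on $\p D_{j}$, and $v=O(|x|^{-1})$ since $\int_{\p D}\Gvf=0$, so $v$ solves \eqnref{mainproblem} with $\Gn=0$. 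Integrating by parts, $\int_{\Omega_{e}}|\nabla v|^{2}+\sum_{j}k_{j}\int_{D_{j}}|\nabla v|^{2}=-\int_{\p D}v\,\p_{\nu}v|_{+}+\sum_{j}k_{j}\int_{\p D_{j}}v\,\p_{\nu}v|_{-}=0$, the two boundary terms cancelling by $v|_{+}=v|_{-}$ and $\p_{\nu}v|_{+}=k_{j}\p_{\nu}v|_{-}$; since $k_{j}>0$ this forces $\nabla v\equiv0$, hence $v\equiv0$ by the decay, hence $\Gvf=0$. The same computation gives uniqueness for \eqnref{mainproblem}, hence for \eqnref{NCE}.

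For (iii), the annihilator of $\Hcal_{0}^{-1/2}$ inside $\Hcal^{1/2}$ is exactly the set of pairs constant on $\p D_{1}$ and on $\p D_{2}$ (a standard fact); since $\la\psi,\Gvf\ra=-(\Sbb[\psi],\Gvf)$, membership $\psi\in E_{1/2}$ is equivalent to $\Sbb[\psi]=\Scal_{\p D}[\psi]|_{\p D}$ being constant on each $\p D_{j}$, which by uniqueness for the Dirichlet problem on $D_{j}$ and continuity of $\Scal_{\p D}[\psi]$ across $\p D$ is equivalent to $\Scal_{\p D}[\psi]$ being constant on $D_{1}$ and on $D_{2}$, hence to $\p_{\nu}\Scal_{\p D}[\psi]|_{-}=(-\tfrac12 I+\Kbb^{*})[\psi]=0$, i.e. to $\Kbb^{*}[\psi]=\tfrac12\psi$. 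This gives $E_{1/2}=\ker(\tfrac12 I-\Kbb^{*})=\{\psi\in\Hcal^{-1/2}:\Scal_{\p D}[\psi]\text{ is constant on }D_{1}\text{ and }D_{2}\}$. Finally $\dim E_{1/2}=2$: the map $\psi\mapsto(\int_{\p D_{1}}\psi_{1},\int_{\p D_{2}}\psi_{2})$ is injective on $E_{1/2}$ because its kernel is $\Hcal_{0}^{-1/2}\cap E_{1/2}=\{0\}$, so $\dim E_{1/2}\le2$, while the preimages under $\Sbb_{\p D}$ of the two constant pairs $(1,0)$ and $(0,1)$ lie in $E_{1/2}$ and are independent, so $\dim E_{1/2}=2$ and $\Hcal^{-1/2}=\Hcal_{0}^{-1/2}\oplus E_{1/2}$. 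The main obstacle is precisely this last point: it uses the invertibility of $\Sbb_{\p D}:\Hcal^{-1/2}\to\Hcal^{1/2}$, the one genuinely two-dimensional subtlety (the logarithmic-capacity-one degeneracy), which is handled by a harmless rescaling; everything else reduces to the energy identity and the Fredholm alternative.
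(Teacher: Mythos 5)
Your proposal is correct and, for parts (ii) and (iii), follows the same high-level route as the paper: Fredholm alternative plus injectivity for (ii), and the duality characterization of $E_{1/2}$ via the annihilator of $\Hcal_0^{-1/2}$ for (iii). Where you genuinely diverge is in part (i): the paper does not prove it at all but simply cites Kellogg's book, whereas you derive the spectral bound from the two energy identities
\[
\int_{\Rbb^2\setminus\ol D}|\nabla u|^2=\Bigl(\tfrac12+\Gl\Bigr)\la\Gvf,\Gvf\ra,\qquad
\int_{D_1\cup D_2}|\nabla u|^2=\Bigl(\tfrac12-\Gl\Bigr)\la\Gvf,\Gvf\ra
\]
for an eigenfunction $\Gvf$, and exclude the endpoints $\pm\tfrac12$ by the decay at infinity (using $\Gvf\in\Hcal_0^{-1/2}$ to kill the logarithmic tail). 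This makes the lemma self-contained in a way the paper leaves to the reader, and the same identities immediately justify the injectivity step in (ii), which the paper compresses to the one phrase ``it implies that $u\equiv 0$.'' In (iii) your dimension count is also more explicit than the paper's: you use $E_{1/2}\cap\Hcal_0^{-1/2}=\{0\}$ (from positive definiteness of $\la\cdot,\cdot\ra$ on $\Hcal_0^{-1/2}$) for the upper bound, and invertibility of $\Sbb_{\p D}$ for the lower bound; the paper instead exhibits explicit generators $\psi^1,\psi^2$ coming from the auxiliary harmonic functions $q^1,q^2$ in the following subsection, with their existence cited from \cite{ACKLY13}. Both routes are fine; yours trades the appeal to $q^1,q^2$ for the (also standard, but worth stating as you do) invertibility of $\Sbb_{\p D}$ after a harmless rescaling to avoid the logarithmic-capacity-one degeneracy. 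Overall the proposal is a correct and slightly more complete version of the paper's argument, with a genuinely different and self-contained proof of part (i).
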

\begin{proof}
(i) is well-known (see, for example, \cite[Chapter XI, Section 11]{Kellog-book}).

Since $\Kbb_{\p D}^*$ is compact, (ii) follows from the injectivity of $\GL - \Kbb_{\p D}^*$ on $\Hcal_0^{-1/2}(\p D)$ by Fredholm alternative. If $\left( \GL - \Kbb_{\p D}^* \right) [\Gvf]=0$, then $u(x) := \Scal_{\p D}[\Gvf](x)$ is the solution to \eqnref{mainproblem} with $\Gn=0$ and $u(x)=O(|x|^{-1})$ as $|x| \to \infty$. It implies that $u \equiv 0$ and hence $\Gvf=0$.

If $\psi \in E_{1/2}$, then $(\Gvf, \Sbb_{\p D}[\psi])=0$ for all $\Gvf \in \Hcal_0^{-1/2}$. Thus $\Sbb_{\p D}[\psi]$ is constant, that is, $u:= \Scal_{\p D}[\psi]$ is constant on $D_1$ and $D_2$. Therefore, we have
$$
\left( -1/2 I + \Kbb_{\p D}^* \right) [\psi] = \p u|_- =0 ,
$$
and hence $\Kbb_{\p D}^* [\psi] = 1/2 \psi$. The converse can be proved by reversing the argument.
\end{proof}

\subsection{$1/2$ eigenvectors}\label{subsec:1/2}

Let
$q^1$ be the solution to
\beq\label{qonedef}
\begin{cases}
\ds \GD q^1  = 0 \quad & \mbox{in } \Rbb^2 \setminus (\p D_1 \cup \p D_2),\\
\ds q^1 = \mbox{constant} \quad &\mbox{on } \p D_i, \ i=1,2,  \\
\nm
\ds \int_{\p D_i} \p_\nu q^1|_+ \, d\Gs = (-1)^{i+1} , \quad &i=1,2, \\
\nm
\ds q^1(x) =O(|x|^{-1}) \quad &\mbox{as } |x|\rightarrow \infty.
\end{cases}
\eeq
The existence of the solution $q^1$ (and $q^2$ below) is proved in \cite{ACKLY13} for inclusions $D_j$ of general shape, not necessarily disks.  Note that the constant values $q^1|_{\p D_j}$ satisfy that $q^1|_{\p D_1} <0$ and $q^1|_{\p D_2} >0$, which can be shown using the maximum principle and Hopf's lemma. In particular, $q^1$ has a potential gap, namely,
\beq\label{qgap}
q^1|_{\p D_2} - q^1|_{\p D_1} >0.
\eeq
Let
\beq\label{mudef}
m= -\frac{q^1|_{\p D_2}}{q^1|_{\p D_1}} (>0),
\eeq
and let $q^2$ be the solution to
\beq\label{qtwodef}
\begin{cases}
\ds \GD q^2  = 0 \quad \mbox{in } \Rbb^2 \setminus (\p D_1 \cup \p D_2),\\
\ds q^2 = \mbox{constant} \quad \mbox{on } \p D_i, \ i=1,2,  \\
\nm
\ds \int_{\p D_1} \p_\nu q^2|_+ \, d\Gs = m, \quad \int_{\p D_2} \p_\nu q^2|_+ \, d\Gs = 1, \\
\ds q^2 (x)= \frac{m+1}{2\pi} \ln|x| + O(|x|^{-1}) \quad\mbox{as }|x| \to \infty.
\end{cases}
\eeq

Let
\beq
\psi^i:=
\begin{bmatrix}
\p_\nu q^i|_{\p D_1+} \\ \p_\nu q^i|_{\p D_2+}
\end{bmatrix}, \quad j=1,2.
\eeq
One can see from \eqnref{qonedef} and \eqnref{qtwodef} that for $i=1,2$
$$
q^i(x)= \Scal_{\p D}[\psi^i](x)+ \Dcal_{\p D}[q^i](x)
$$
for $x \in \Rbb^2 \setminus (D_1 \cup D_2)$ by applying Green's theorem, where $\Dcal_{\p D}$ is the double layer potential, namely,
$$
\Dcal_{\p D} [\Gvf] (x):=
\frac{1}{2\pi}\int_{\p D} \frac{(x-y) \cdot \nu_x}{|x-y|^2} \; \Gvf(y)\,d\Gs(y), \quad x \in \Rbb^2 \setminus (\p D_1 \cup \p D_2).
$$
Since $q^i$ is constant on $\p D_j$ for $j=1,2$, $\Dcal_{\p D}[q^i](x)=0$, and hence $q^i(x)= \Scal_{\p D}[\psi^i](x)$ for $x \in \Rbb^2 \setminus (D_1 \cup D_2)$. We then infer from Lemma \ref{lem:1/2} that $\psi^i$ belongs to the eigenspace $E_{1/2}$.

The eigenfunctions $\psi^1$ and $\psi^2$ constitute an orthogonal basis of $E_{1/2}$.
In fact, we have
\begin{align*}
\la \psi^2 , \psi^1 \ra_{\p D} &= -(\psi^2, \Sbb_{\p D}[\psi^1]) \\
&= - q^1|_{\p D_1} \int_{\p D_1} \psi^2_1 d\Gs -q^1|_{\p D_2} \int_{\p D_2} \psi^2_2 d\Gs = -m q^1|_{\p D_1} -  q^1|_{\p D_2}=0,
\end{align*}
where the last equality follows from the definition \eqnref{mudef} of $m$.
Note that
$$
0= (\psi^1, \Sbb_{\p D}[\psi^2]) = q^2|_{\p D_1} \int_{\p D_1} \psi^1_1 \, d\Gs + q^2|_{\p D_2} \int_{\p D_2} \psi^1_2 \, d\Gs = q^2|_{\p D_1} -  q^2|_{\p D_2}.
$$
Thus we have
\beq\label{nogap}
q^2|_{\p D_1} = q^2|_{\p D_2},
\eeq
that is, $q^2$ has no potential gap on $\p D_1$ and $\p D_2$, from which one can infer that $\nabla q^2$ is bounded in $\Rbb^2 \setminus (D_1 \cup D_2)$ (this can be proved by following the same lines of the proof of Theorem 2.1 in \cite{KLY}). It is also known that $|\nabla q^1| \gtrsim \Ge^{-1/2}$ (see \cite{Yun} or the end of this subsection).

One can easily see that
\beq\label{psionenorm}
\| \psi^1 \|_{\p D}^2 = q^1|_{\p D_2} - q^1|_{\p D_1},
\eeq
and
\beq\label{psitwonorm}
\| \psi^2 \|_{\p D}^2 =  - q^2|_{\p D_1} (1+ m)= q^2|_{\p D_1} \left( \frac{q^1|_{\p D_2}}{q^1|_{\p D_1}} -1 \right).
\eeq
By \eqnref{qgap}, $\| \psi^1 \|_{\p D} >0$. But we do not know if $q^2|_{\p D_1} \neq 0$. If it is the case, then $\la \cdot, \cdot \ra_{\p D}$ is an inner product not only on $\Hcal_0^{-1/2}$ but also on $\Hcal^{-1/2}$.

What we wrote so far in this subsection holds for inclusions of arbitrary shape (with Lipschitz continuous boundaries). If $D_1$ and $D_2$ are disks, then $q^1$ is explicitly given by
\beq\label{qBdef}
q^1(x) = \frac{1}{2\pi} (\ln|x-p_1|-\ln|x-p_2|), \quad x \in \Rbb^2 \setminus (D_1 \cup D_2),
\eeq
where $p_1$ and $p_2$ be the unique fixed points of the combined reflections $R_1\circ R_2$ and $R_2\circ R_1$, respectively. Here, $R_j$ be the inversion with respect to the circle $\p D_j$. Since $\p D_1$ and $\p D_2$ are the Apollonius circles with respect to points $p_1$ and $p_2$, $q^1$ is constant on $\p D_1$ and $\p D_2$ (constants may be different depending on the radii of the circles). This function was used for the first time in \cite{Yun} for analysis of field concentration. From the formula \eqnref{qBdef}, one can see that $\nabla q^1$ blows up at the order of $\Ge^{-1/2}$ as $\Ge$ tends to zero.

\subsection{Well-posedness for the transmission problems}

Let
\beq\label{Ximatrix}
\Xi := \begin{bmatrix} k_1-1 & 0 \\ 0 & k_2-1 \end{bmatrix}.
\eeq
For $\Gn = (\Gn_1, \Gn_2)^T \in \Hcal^{-1/2}(\p D)$, let
$$
m_i := \frac{\la \Xi \Gn, \psi^i \ra_{\p D}}{\| \psi^i \|_{\p D}^2} , \quad i=1,2,
$$
assuming that $\| \psi^2 \|_{\p D} \neq 0$. One can see from \eqnref{psionenorm} that
\beq\label{Monedef}
m_1= \frac{- \sum_{j=1}^2 q^1|_{\p D_j} (k_j-1) \int_{\p D_j} \Gn_j}{q^1|_{\p D_2} - q^1|_{\p D_1}},
\eeq
and from \eqnref{nogap} and \eqnref{psitwonorm} that
\beq\label{Mtwodef}
m_2= \frac{q^1|_{\p D_1} \sum_{j=1}^2 (k_j-1) \int_{\p D_j} \Gn_j}{q^1|_{\p D_2} - q^1|_{\p D_1}}.
\eeq
We take \eqnref{Mtwodef} as the definition of $m_2$ when $\| \psi^2 \|_{\p D} = 0$, or equivalently $q^2|_{\p D_1} = 0$. We then have
\beq\label{E1/2compMj}
\Xi \Gn - (m_1 \psi^1 + m_2 \psi^2) \in \Hcal^{-1/2}_0(\p D).
\eeq

Let $u$ be the solution to \eqnref{mainproblem} and let $v:= u- (m_1 q^1 + m_2 q^2)$. Then $v$ is the solution to \eqnref{mainproblem} with $\Gn$ replace by $\Gn^0$, which is defined by
\beq\label{gzerodef}
\Gn^0:= \Gn- \Xi^{-1} (m_1 \psi^1 + m_2 \psi^2).
\eeq
We emphasize that $\Gn^0$ belongs to $\Hcal_0^{-1/2}(\p D)$.

\begin{prop}\label{prop:unique}
For $\Gn \in \Hcal^{-1/2}(\p D)$, the solution $v$ to \eqnref{mainproblem} is unique and given by
\beq\label{mainprobsol}
v(x)= m_1 q^1(x) + m_2 q^2(x) + \Scal_{\p D}[\Gvf^0](x), \quad x \in \Rbb^2,
\eeq
where $m_1, m_2$ are numbers given by \eqnref{Monedef} and \eqnref{Mtwodef}, and $\Gvf^0 \in \Hcal_0^{-1/2}(\p D)$ is the unique solution to
\beq\label{Gvfgzero}
\left( \GL - \Kbb_{\p D}^* \right) [\Gvf^0] = \Gn^0
\eeq
with $\Gn^0$ given by \eqnref{gzerodef}.
\end{prop}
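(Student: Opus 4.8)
The plan is to reduce the question to two already-established facts: the invertibility of $\GL - \Kbb_{\p D}^*$ on $\Hcal_0^{-1/2}(\p D)$ from Lemma \ref{lem:1/2}(ii), and the fact that $q^1$ and $q^2$ are themselves solutions to \eqnref{mainproblem} with source terms coming from the $1/2$-eigenspace $E_{1/2}$. First I would verify that $m_1 q^1 + m_2 q^2$ solves \eqnref{mainproblem} with $\Gn$ replaced by $\Xi^{-1}(m_1 \psi^1 + m_2 \psi^2)$: since $q^i = \Scal_{\p D}[\psi^i]$ outside $D$ and $\psi^i \in E_{1/2}$, the jump relation gives $\p q^i|_+ - k_j \p q^i|_- = (\tfrac12 - k_j(-\tfrac12))\psi^i = \Gl_j^{-1}\cdot\tfrac{k_j-1}{2}\cdots$; more cleanly, $\p q^i|_+ = \psi^i$ and $\p q^i|_- = 0$ on each $\p D_j$ because $q^i$ is constant (hence $\nabla q^i = 0$) inside $D_j$, so $\p q^i|_+ - k_j \p q^i|_- = \psi^i$, which matches the third line of \eqnref{mainproblem} with $\Gn_j$ there equal to $(k_j-1)^{-1}\psi^i_j$. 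The growth conditions at infinity are built into \eqnref{qonedef} and \eqnref{qtwodef}, and the constant $c$ in \eqnref{mainproblem} is accounted for by the $\ln|x|$ term of $q^2$. Therefore $v := u - (m_1 q^1 + m_2 q^2)$ solves \eqnref{mainproblem} with $\Gn$ replaced by $\Gn^0 = \Gn - \Xi^{-1}(m_1\psi^1 + m_2\psi^2)$, and by \eqnref{E1/2compMj} we have $\Gn^0 \in \Hcal_0^{-1/2}(\p D)$.

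Next I would represent $v$ as $v = \Scal_{\p D}[\Gvf^0]$ for $\Gvf^0 \in \Hcal_0^{-1/2}(\p D)$. The reduction to the integral equation \eqnref{Gvfg} carried out in Section \ref{sec:NP} shows that such a representation with $\Gvf^0 \in \Hcal^{-1/2}$ solving $(\GL - \Kbb_{\p D}^*)[\Gvf^0] = \Gn^0$ produces a solution to \eqnref{mainproblem}; one must check that $\Gvf^0$ can be taken in the subspace $\Hcal_0^{-1/2}(\p D)$. This holds because $\Gn^0 \in \Hcal_0^{-1/2}(\p D)$ and, by Lemma \ref{lem:1/2}(ii), $\GL - \Kbb_{\p D}^*$ maps $\Hcal_0^{-1/2}(\p D)$ bijectively onto itself, so $\Gvf^0 := (\GL - \Kbb_{\p D}^*)^{-1}[\Gn^0] \in \Hcal_0^{-1/2}(\p D)$ is well-defined and unique in that subspace. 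The function $m_1 q^1 + m_2 q^2 + \Scal_{\p D}[\Gvf^0]$ then satisfies all four lines of \eqnref{mainproblem}, which gives existence and the formula \eqnref{mainprobsol}.

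For uniqueness, suppose $v_1, v_2$ both solve \eqnref{mainproblem} with the same $\Gn$; then $w := v_1 - v_2$ solves \eqnref{mainproblem} with $\Gn = 0$ and, comparing the $\ln|x|$ coefficients, $w(x) = O(|x|^{-1})$ at infinity. The standard energy argument — multiply $\nabla\cdot(\Gs\nabla w) = 0$ by $w$, integrate over a large ball, use the transmission conditions and the decay at infinity to kill the boundary term — forces $\nabla w \equiv 0$ on each component, hence $w$ is constant, and the decay at infinity forces that constant to be $0$. (This is exactly the argument already invoked in the proof of Lemma \ref{lem:1/2}(ii).)

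The only genuinely delicate point is the bookkeeping in the first paragraph: matching the normal-derivative jump of $m_1 q^1 + m_2 q^2$ against the correct multiple of $\psi^i$ and confirming the constant $c$ and the logarithmic growth at infinity are consistent — in particular that the coefficient $(m+1)/(2\pi)$ in \eqnref{qtwodef} is the right normalization so that $v$ inherits an admissible behavior $c\ln|x| + O(|x|^{-1})$. Everything else is a direct appeal to Lemma \ref{lem:1/2} and the Fredholm-type reasoning already set up in this section.
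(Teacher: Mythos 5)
Your existence/representation portion is sound and runs parallel to what the paper sets up in Section 2: $m_1 q^1 + m_2 q^2$ solves \eqnref{mainproblem} with source $\Xi^{-1}(m_1\psi^1 + m_2\psi^2)$ since $\p_\nu q^i|_- = 0$ and $\p_\nu q^i|_+ = \psi^i_j$ on $\p D_j$, so $\Gn^0 := \Gn - \Xi^{-1}(m_1\psi^1+m_2\psi^2) \in \Hcal_0^{-1/2}(\p D)$ and Lemma \ref{lem:1/2}(ii) supplies $\Gvf^0$. That much matches the paper's intent, even though the paper simply declares ``It suffices to show uniqueness.''

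The uniqueness argument, however, has a genuine gap at exactly the step the paper's proof is devoted to. You write ``comparing the $\ln|x|$ coefficients, $w(x) = O(|x|^{-1})$ at infinity,'' but the constant $c$ in \eqnref{mainproblem} is part of the unknown (the problem only requires $v(x) = c\ln|x| + O(|x|^{-1})$ \emph{for some} $c$), so two solutions with the same $\Gn$ could a priori carry different logarithmic coefficients $c_1, c_2$, and $w = v_1 - v_2$ could grow like $(c_1-c_2)\ln|x|$. Establishing that in fact $c=0$ when $\Gn=0$ is the entire content of the paper's proof: it subtracts $\tfrac{2\pi c}{m+1}q^2$ from $v$ to produce a genuinely decaying $w$, then applies the divergence theorem to compute $\sum_j \int_{\p D_j}(\p_\nu w|_+ - k_j\p_\nu w|_-)$ in two ways (getting $-2\pi c$ on one side and $0$ on the other), forcing $c = 0$. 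Equivalently, you could apply the divergence theorem directly on $B_R \setminus D$ and let $R\to\infty$ to derive $2\pi c = \sum_j (k_j-1)\int_{\p D_j}\Gn_j$; either way, the conclusion $w = O(|x|^{-1})$ needs this computation, it cannot be read off by inspection. Only after that does your energy argument (or the paper's unstated final step) close the proof. Also, the parenthetical that this is ``exactly the argument already invoked in the proof of Lemma \ref{lem:1/2}(ii)'' overstates the parallel: there, $\Gvf \in \Hcal_0^{-1/2}$ already forces $\Scal_{\p D}[\Gvf] = O(|x|^{-1})$, so no logarithmic term arises and the issue you are eliding does not occur.
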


\begin{proof}
It suffices to show that the solution is unique. For that, we prove that the solution $u$ to \eqnref{mainproblem} is zero assuming $\Gn=0$. Let $v$ be the solution to \eqnref{mainproblem} with $\Gn=0$. If $v (x)= c \ln |x| +O(|x|^{-1})$ for some constant $c$ as $|x| \to \infty$, let
$$
w(x):= v(x)- \frac{2\pi c}{m+1}q^2(x).
$$
Then, $w$ is a solution to \eqnref{mainproblem} with $\Gn= -\frac{2\pi c}{m+1} \left(\frac{1}{k_1-1} \p_\nu q^2|_{\p D_1+}, \frac{1}{k_2-1} \p_\nu q^2|_{\p D_2+}\right)^T$ and $w(x)= O(|x|^{-1})$ as $|x| \to \infty$ by the fourth line of \eqnref{qtwodef}. It then follows from the third line of \eqnref{qtwodef} that
$$
\sum_{j=1}^2 \int_{\p D_j} (\p_\nu w|_+ - k_j \p_\nu w|_-) = -\frac{2\pi c}{m+1} \sum_{j=1}^2 \int_{\p D_j} \p_\nu q^2|_{+} = -2\pi c.
$$
On the other hand, we have from divergence theorem that
$$
\sum_{j=1}^2 \int_{\p D_j} (\p_\nu w|_+ - k_j \p_\nu w|_-) = -\int_{\Rbb^2 \setminus D} \GD w - \sum_{j=1}^2 k_j \int_{D_j} \GD w =0.
$$
Thus, we have $c=0$, and hence $v (x)= O(|x|^{-1})$ as $|x| \to \infty$ from which we conclude that $v=0$.
\end{proof}

We have the following corollary:
\begin{cor}\label{cor:repin}
Let $F, \Gn^0, m_1, m_2$ be quantities respectively given by \eqnref{Fdef}, \eqnref{gzerodef}, \eqnref{Monedef}, and \eqnref{Mtwodef}, and let $\Gvf^0 \in \Hcal_0^{-1/2}(\p D)$ be the solution to \eqnref{Gvfgzero}.
Then, the unique solution $u$ to \eqnref{NCE} is given by
\beq\label{Nmainprobsol}
u= F + m_1 q^1 + m_2 q^2 + \Scal_{\p D}[\Gvf^0].
\eeq
\end{cor}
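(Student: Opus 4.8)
The plan is to obtain the corollary as an immediate consequence of Proposition~\ref{prop:unique}, via the reduction of \eqnref{NCE} to \eqnref{mainproblem} already recorded at the beginning of this section. Set $v := u - F$ with $F$ as in \eqnref{Fdef}. The only preliminary point is that $\Gn := (\p_\nu F|_{\p D_1},\p_\nu F|_{\p D_2})^T$ is an admissible datum, i.e.\ $\Gn \in \Hcal^{-1/2}(\p D)$; this follows from the standard mapping properties of the (weighted) Newtonian potential of the compactly supported source $f$, which place $F$ in a piecewise Sobolev class whose normal traces on $\p D_1$ and $\p D_2$ lie in $H^{-1/2}$, under the standing regularity assumption on $g$ (hence on $f$). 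Recall also that by construction $\GD F = f$ in $\Rbb^2\setminus\ol D$ and $\GD F = k_j^{-1}f$ in $D_j$.

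Next I would verify the far-field behaviour of $F$, which is the one quantitative point to check. Expanding $\ln|x-y| = \ln|x| + O(|x|^{-1})$ for $y$ in the compact support of $f$ and substituting into \eqnref{Fdef} gives $F(x) = c_0\ln|x| + O(|x|^{-1})$ as $|x|\to\infty$, where $c_0$ is the weighted total mass of $f$ produced by that expansion. Hence, if $u$ solves \eqnref{NCE} with $u(x) = c\ln|x| + O(|x|^{-1})$, then $v = u - F$ satisfies $v(x) = (c - c_0)\ln|x| + O(|x|^{-1})$, which is the admissible far-field form in the last line of \eqnref{mainproblem}. Combining this with the above identities for $\GD F$ and the interface bookkeeping performed at the start of this section shows that $v$ solves \eqnref{mainproblem} with the above $\Gn$; since the correspondence $u\mapsto u - F$ is reversible, it is a bijection between the solution sets of \eqnref{NCE} and of \eqnref{mainproblem}.

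Finally, apply Proposition~\ref{prop:unique} to $v$. It gives that $v$ is the unique solution of \eqnref{mainproblem} and that $v = m_1 q^1 + m_2 q^2 + \Scal_{\p D}[\Gvf^0]$, where $m_1, m_2$ are given by \eqnref{Monedef} and \eqnref{Mtwodef} evaluated at the present $\Gn$ (so that $\int_{\p D_j}\Gn_j$ there equals $\int_{\p D_j}\p_\nu F|_{\p D_j}$), $\Gn^0$ is as in \eqnref{gzerodef}, and $\Gvf^0\in\Hcal_0^{-1/2}(\p D)$ is the unique solution of \eqnref{Gvfgzero}. Adding $F$ to both sides yields \eqnref{Nmainprobsol}, and uniqueness of $u$ is inherited from uniqueness of $v$ through the bijection above. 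There is no genuine obstacle here: the proof is essentially bookkeeping, the only two things requiring any care being the computation of the logarithmic growth constant $c_0$ of $F$ at infinity (so that $v$ keeps the admissible far-field form demanded in \eqnref{mainproblem}) and the verification that $\Gn = \p_\nu F|_{\p D}$ indeed lies in $\Hcal^{-1/2}(\p D)$ under the standing hypotheses on $g$.
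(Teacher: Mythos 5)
Your proposal is correct and follows exactly the route the paper intends: it is the reduction $v = u - F$ from \eqnref{NCE} to \eqnref{mainproblem} already carried out in \eqnref{200} at the start of Section~\ref{sec:NP}, followed by an application of Proposition~\ref{prop:unique}. The paper gives no separate proof for the corollary precisely because it is this immediate consequence, and the two points you flag as needing care (that $\Gn = \p_\nu F|_{\p D}$ lies in $\Hcal^{-1/2}(\p D)$ and that $F$ has the admissible $c_0\ln|x| + O(|x|^{-1})$ far-field behaviour from the compact support of $f$) are the right things to check and are correctly verified.
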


If $\Gn_j= \p_\nu H|_{\p D_j}$ for $j=1,2$ where $H$ is the harmonic function appearing in \eqnref{HCE}, then
$$
\int_{\p D_j} \Gn_j = \int_{\p D_j} \p_\nu H = 0.
$$
Thus, in this case, we have $m_1=m_2=0$.

\begin{cor}\label{cor:repho}
Let $\Gvf \in \Hcal_0^{-1/2}(\p D)$ be the solution to \eqnref{Gvfg} with $\Gn_j= \p_\Gv H|_{\p D_j}$.
Then, the unique solution $u$ to \eqnref{HCE} is given by
\beq\label{Hmainprobsol}
u= H + \Scal_{\p D}[\Gvf].
\eeq
\end{cor}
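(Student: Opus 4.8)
The plan is to reduce the homogeneous problem \eqnref{HCE} to the interface problem \eqnref{mainproblem} by the substitution $v:=u-H$ and then invoke Proposition \ref{prop:unique}, the point being that the harmonicity of $H$ forces the terms $m_1q^1+m_2q^2$ in the representation \eqnref{mainprobsol} to vanish. First I would recall, as noted at the beginning of Section \ref{sec:NP}, that with $v:=u-H$ the function $v$ solves \eqnref{mainproblem} with $\Gn_j=\p_\nu H|_{\p D_j}$ for $j=1,2$. Since $H$ is harmonic in $\Rbb^2$, the divergence theorem gives $\int_{\p D_j}\Gn_j\,d\Gs=\int_{\p D_j}\p_\nu H\,d\Gs=0$, so $\Gn=(\Gn_1,\Gn_2)^T\in\Hcal_0^{-1/2}(\p D)$. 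Consequently the formulas \eqnref{Monedef} and \eqnref{Mtwodef} yield $m_1=m_2=0$; by \eqnref{gzerodef} this gives $\Gn^0=\Gn$, and the equation \eqnref{Gvfgzero} for $\Gvf^0$ becomes exactly \eqnref{Gvfg}.

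Next I would note that \eqnref{Gvfg} with this $\Gn\in\Hcal_0^{-1/2}(\p D)$ has a unique solution $\Gvf\in\Hcal_0^{-1/2}(\p D)$, which is precisely the invertibility of $\GL-\Kbb_{\p D}^*$ on $\Hcal_0^{-1/2}(\p D)$ established in Lemma \ref{lem:1/2}(ii). Hence $\Gvf^0=\Gvf$, and Proposition \ref{prop:unique} (with $m_1=m_2=0$) says that the unique solution to \eqnref{mainproblem} is $v=\Scal_{\p D}[\Gvf]$. Therefore $u=H+\Scal_{\p D}[\Gvf]$, which is \eqnref{Hmainprobsol}.

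Finally I would check consistency with the decay condition in \eqnref{HCE}. Since $\Gvf\in\Hcal_0^{-1/2}(\p D)$, each component has vanishing mean, so $\int_{\p D}\Gvf\,d\Gs=0$; the standard far-field expansion of the single layer potential then gives $\Scal_{\p D}[\Gvf](x)=\frac{1}{2\pi}\bigl(\int_{\p D}\Gvf\,d\Gs\bigr)\ln|x|+O(|x|^{-1})=O(|x|^{-1})$ as $|x|\to\infty$, so that $u-H=O(|x|^{-1})$ as required. There is no genuine obstacle in this corollary: all the analytic content sits in Lemma \ref{lem:1/2} and Proposition \ref{prop:unique}, and the only point that deserves explicit attention is that the harmonicity of $H$ makes the boundary data $\Gn_j$ have zero mean, which is exactly what annihilates the $q^1,q^2$ contributions and identifies $\Gvf^0$ with $\Gvf$.
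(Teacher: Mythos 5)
Your proof is correct and follows essentially the same route as the paper: reduce to \eqnref{mainproblem} via $v=u-H$, use harmonicity of $H$ to get $\int_{\p D_j}\p_\nu H=0$ hence $m_1=m_2=0$, and then read off the representation from Proposition \ref{prop:unique}. Your extra remarks (identifying $\Gvf^0$ with $\Gvf$ via Lemma \ref{lem:1/2}(ii) and verifying the $O(|x|^{-1})$ decay) are just spelling out what the paper leaves implicit.
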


\section{NP spectrum on two circles}\label{sec:twocircles}

From now on we suppose that $D_1$ and $D_2$ are disks of radii $r_1$ and $r_2$, respectively. In this section we recall a complete spectrum of the NP operator on $\Hcal^{-1/2}_0$ using the NP spectrum on concentric disks and M\"obius invariance of the NP spectrum (see, for example, \cite{Sch1}). NP spectrum on two disks is derived in \cite{BT} and we follow the method there. However, we need to review the method in some detail since we need to prove M\"obius invariance of the inner product (Lemma \ref{single_star} (ii)), and to derive the solution formula for the integral equation \eqnref{Gvfg} (Lemma \ref{lem:intsol}).

\subsection{NP spectrum on concentric circles}

Let $D_j^*$, $j=1,2$ be the disk of radius $R_j$ centered at $0$, where $R_1 < R_2$, and let $\Kbb_{\p D^*}$ be the NP operator on $\p D^*$.
The complete spectrum of $\Kbb_{\p D^*}$ on $\Hcal_0^{-1/2}(\p D^*)$ is known (see \cite{ACKLM13}) as we describe in the following lemma.

\begin{lemma}\label{starspec}
Let $\Gr:= R_1/R_2$. The eigenvalues of $\Kbb_{\p D^*}$ on $\Hcal_0^{-1/2}(\p D^*)$ are $\pm \frac{1}{2} \Gr^{|n|}$ and the corresponding eigenfunctions are $f^{|n|,\pm}$ and $f^{-|n|,\pm}$ where
\beq\label{stareigenfunction}
f^{n,\pm} =  \left( \frac{1}{R_1} e^{in\Gt} , \pm \frac{1}{R_2} e^{in\Gt}  \right)^T , \quad n= \pm 1, \pm 2, \ldots.
\eeq
\end{lemma}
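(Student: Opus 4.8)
The plan is to use rotational invariance to block‑diagonalise $\Kbb_{\p D^*}$ and then diagonalise each block by an explicit single‑layer computation. Since both circles are centred at $0$, $\Kbb_{\p D^*}$ commutes with the rotation group acting on $\p D^*$, hence it preserves, for each integer $n\ne0$, the two‑dimensional space $V_n:=\mathrm{span}\{(e^{in\Gt},0)^T,(0,e^{in\Gt})^T\}$. Because $\{e^{in\Gt}\}_{n\ne0}$ spans $H^{-1/2}_0$ of a circle, $\bigoplus_{n\ne0}V_n$ is dense in $\Hcal^{-1/2}_0(\p D^*)$, so it suffices to find the eigenpairs of $\Kbb_{\p D^*}$ on each $V_n$; those then constitute the complete spectrum on $\Hcal^{-1/2}_0(\p D^*)$.

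On $V_n$ the matrix of $\Kbb_{\p D^*}$ has zero diagonal, since the single‑circle operator $\Kcal^*_{\p D^*_j}$ annihilates $e^{in\Gt}$ for $n\ne0$. For the off‑diagonal entries I would expand the kernel in circular harmonics, $\ln|x-y|=\ln\max(|x|,|y|)-\sum_{k\ge1}\tfrac1k\big(\tfrac{\min}{\max}\big)^k\cos k(\arg x-\arg y)$, obtaining $\Scal_{\p D^*_1}[e^{in\Gt}](x)=-\tfrac1{2|n|}R_1^{|n|+1}|x|^{-|n|}e^{in\Gt}$ for $|x|>R_1$ and $\Scal_{\p D^*_2}[e^{in\Gt}](x)=-\tfrac1{2|n|}R_2^{1-|n|}|x|^{|n|}e^{in\Gt}$ for $|x|<R_2$; differentiating radially and restricting to the other circle gives, with $\Gr:=R_1/R_2$, the two off‑diagonal entries $\pm\tfrac12\Gr^{|n|+1}$ and $\pm\tfrac12\Gr^{|n|-1}$. (The relative sign of these two is pinned down by the requirement that $\Kbb_{\p D^*}$ be self‑adjoint for $\la\cdot,\cdot\ra_{\p D^*}$; equivalently one may compute only one entry and recover the other from self‑adjointness, using $\|(e^{in\Gt},0)^T\|^2_{\p D^*}/\|(0,e^{in\Gt})^T\|^2_{\p D^*}=\Gr^2$, which comes from the diagonal blocks of $\Sbb_{\p D^*}$.) The product of the two off‑diagonal entries is then $\tfrac14\Gr^{2|n|}$, so the eigenvalues on $V_n$ are $\pm\tfrac12\Gr^{|n|}$, and solving $\Kbb_{\p D^*}v=\lambda v$ gives $v\propto(1/R_1,\pm1/R_2)^T$, i.e.\ $v$ is a scalar multiple of $f^{n,\pm}$. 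Carrying this out for both $n$ and $-n$ produces the eigenfunctions $f^{|n|,\pm}$ and $f^{-|n|,\pm}$ of the statement, and since $0<\Gr<1$ the eigenvalues $\pm\tfrac12\Gr^{|n|}$ are distinct for distinct $|n|$ and accumulate only at $0$, consistent with compactness.

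The step that needs care is precisely this sign/orientation bookkeeping. If one equips both circles with the outward normal of the disk it bounds, the $2\times2$ block on $V_n$ comes out skew (off‑diagonal entries of opposite sign) and its eigenvalues are purely imaginary, which is impossible for a self‑adjoint operator; the correct orientation is the one inherited from the M\"obius identification of the pair of concentric circles with a pair of disjoint disks — equivalently, both circles traversed as the boundary of the annulus between them — which flips exactly one off‑diagonal sign and is exactly the orientation for which the Plemelj relation \eqnref{Ple} holds on $\p D^*$. Granting that convention (as in the references cited before the statement), the remainder is the elementary computation above.
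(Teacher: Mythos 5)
Your proof is essentially correct and takes the same route the paper implicitly indicates: the paper cites \cite{ACKLM13} for the single‑layer formula \eqnref{slayerdisk} and leaves the verification that $f^{n,\pm}$ are eigenfunctions (and the completeness of that system) to the reader, while you supply the completeness via the Fourier block decomposition and carry out the off‑diagonal computation explicitly. The arithmetic is right: with off‑diagonal entries $\pm\tfrac12\Gr^{|n|-1}$ and $\pm\tfrac12\Gr^{|n|+1}$ (in the unscaled basis), the product $\tfrac14\Gr^{2|n|}$ yields $\pm\tfrac12\Gr^{|n|}$, and the eigenvectors come out proportional to $(1/R_1,\pm 1/R_2)^T$.

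One inaccuracy in your orientation discussion is worth flagging, even though it does not affect the conclusion. The Möbius‑inherited orientation and the ``boundary of the annulus'' orientation are \emph{not} the same --- they are opposite to each other. Under the map $T$ of \eqnref{zstardef}, $D_2$ is sent to the unbounded component $\{|\Gz|>R_2\}$, so the normal on $\p D_2^*$ points \emph{toward} the origin (see \eqnref{nustar}), while the normal on $\p D_1^*$ points away; the annulus convention is the exact reverse, flipping the inner rather than the outer circle. Both of these flipped conventions symmetrize $\Kbb_{\p D^*}^*$ with respect to $\la\cdot,\cdot\ra_{\p D^*}$ and both give the spectrum $\{\pm\tfrac12\Gr^{|n|}\}$ with eigenfunctions $f^{n,\pm}$; only the pairing of sign and eigenvector swaps. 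Since the lemma statement does not fix that pairing, your argument still proves it, but the ``equivalently'' clause, and the claim that self‑adjointness \emph{alone} pins down the orientation, should be weakened: self‑adjointness rules out the naive choice but cannot distinguish between the two flipped ones. (For the purposes of Corollary 3.3 and Lemma 3.6 the specific pairing does matter, and there the Möbius orientation is the one to use; with that convention the sign in \eqnref{KUUK} should in fact be $+$, since $T'(z)=-\Gb/z^2$ makes $n_{z^*}=-(\bar z/z)n_z$, not $+(\bar z/z)n_z$ as written in \eqnref{nustar}.)
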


Actually Lemma \ref{starspec} is a consequence of the following formula whose proof can be found in \cite{ACKLM13}: if $\GO$ is the disk of radius $R$ (centered at $0$), then for $n \neq 0$
\beq\label{slayerdisk}
\Scal_{\p \GO} \left[\frac{1}{R} e^{in\Gt} \right](r, \Gt) = \begin{cases}
\ds -\frac{1}{2|n|} \left(\frac{r}{R}\right)^{|n|} e^{in\Gt} \quad &\text{if } r \leq R, \\
\ds -\frac{1}{2|n|} \left(\frac{R}{r}\right)^{|n|} e^{in\Gt} \quad &\text{if } r>R.
\end{cases}
\eeq
Using \eqnref{slayerdisk}, one can see that the following formula holds:
if $n >0$, then
\beq\label{single_conc_circ2}
\Scal_{\p D^*} [f^{n, \pm}] (\Gz) =
\begin{cases}
\ds -\frac{1}{2n} (R_1^{-n} \pm R_2^{-n}) \Gz^n \quad & \text{if } |\Gz| \le R_1 , \\
\nm
\ds -\frac{1}{2n} (R_1^n \bar{\Gz}^{-n} \pm R_2^{-n} \Gz^n ) \quad & \text{if } R_1 < |\Gz| \leq R_2, \\
\nm
\ds -\frac{1}{2n} (R_1^{n} \pm R_2^{n}) \bar{\Gz}^{-n} \quad & \text{if } R_2 < |\Gz|,
\end{cases}
\eeq
and if $n <0$, then
\beq\label{single_conc_circ3}
\Scal_{\p D^*} [f^{n, \pm}] (\Gz) = \ol{\Scal_{\p D^*} [f^{-n, \pm}] (\Gz)},
\eeq
where $\Gz=r e^{i\Gt}$. These formulas play a crucial role in what follows.

\subsection{M\"obius invariance and NP spectrum on two circles}

By translating and rotating $D$ if necessary, we may assume that their centers are located at $(c_1,0)$ and $(c_2,0)$, where
\beq\label{conedef}
c_1 = \frac{r_2^2 - r_1^2 - (r_1+r_2+\Ge)^2}{2(r_1 + r_2 + \Ge)} - \frac{\Gb}{2} , \quad c_2 = c_1 + r_1 + r_2 + \Ge
\eeq
with
\beq\label{Gbdef}
\Gb = \sqrt{\Ge} \frac{\sqrt{(2r_1 + \Ge)(2r_2 +\Ge)(2r_1 + 2r_2 + \Ge)}}{r_1 + r_2 + \Ge} ,
\eeq
where $\Ge$ is the separation distance between $D_1$ and $D_2$.  These numbers are chosen so that $\p D_1$ and $\p D_2$ are mapped onto two concentric circles by an inversion in a circle and a translation:
if we let
\beq\label{zstardef}
z^*=Tz := \frac{\Gb}{z} + 1,
\eeq
then $T(\p D_j)$ ($j=1,2$) is the circle of the radius $R_j$ centered at $0$, where $R_j$ is given by
\beq\label{Ronetwodef}
R_1^2 = 1+ \frac{\Gb}{c_1} , \quad
R_2^2 = 1+ \frac{\Gb}{c_2} .
\eeq
Let
\beq
D_1^* := T(D_1)= \{ |\Gz| < R_1 \}, \quad D_2^* := T(D_2)=  \{ |\Gz| > R_2 \}.
\eeq

One can see from \eqnref{Gbdef} that
\beq\label{Gbasym}
\Gb= \frac{4}{r_*} \sqrt{\Ge} + O(\Ge),
\eeq
where $r_*$ is defined by \eqnref{notation2}.
Since $c_1= -r_1 + O(\sqrt{\Ge})$ and $c_2= r_2 + O(\sqrt{\Ge})$, we see from \eqnref{Ronetwodef} that
\beq\label{Rest}
R_1 = 1 - \frac{\Gb}{2r_1} + O(\Ge), \quad
R_2 = 1 + \frac{\Gb}{2r_2} + O(\Ge),
\eeq
which together with \eqnref{Gbasym} yields
\beq\label{Grasymp}
\Gr := \frac{R_1}{R_2} = 1 - r^* \sqrt{\Ge} + O(\Ge),
\eeq
as $\Ge \to 0$.

Under the change of variables \eqnref{zstardef} the line and area elements are respectively transformed as
\beq\label{cvarea}
ds(z^*)= \frac{\Gb}{|z|^2} ds(z)
\eeq
and
\beq\label{cvarea2}
dA(z^*)= \frac{\Gb^2}{|z|^4} dA(z).
\eeq

Define the transformation $U$ from $\Hcal^{-1/2}(\p D)$ to $\Hcal^{-1/2}(\p D^*)$ by
\begin{align} \label{phistar}
(U\Gvf)(z^*) =\Gvf^*(z^*) := \frac{|z|^2}{\Gb} \Gvf(z), \quad z^* \in \p D^*.
\end{align}
We obtain the following lemma (a similar lemma has been proved in \cite{JK} for the case with a single inclusion).

\begin{lemma}\label{single_star}
\begin{itemize}
\item[{\rm (i)}] The following transformation formula holds for any $\Gvf \in \Hcal^{-1/2}_0(\p D)$:
\beq\label{singletrans}
\Scal_{\p D^*} [ \Gvf^* ](z^*)  = \Scal_{\p D} [\Gvf](z) - \Scal_{\p D} [\Gvf](0), \quad z \in \Cbb.
\eeq
\item [{\rm (ii)}] $U$ is a unitary transform from $\Hcal^{-1/2}_0(\p D)$ to $\Hcal^{-1/2}_0(\p D^*)$, namely,
\beq\label{unitary}
\la \Gvf^*, \psi^* \ra_{\p D^*} = \la \Gvf, \psi \ra_{\p D}
\eeq
for any $\Gvf, \psi \in \Hcal^{-1/2}_0(\p D)$.
\item [{\rm (iii)}] $\Kbb^*_{\p D}[\Gvf] = \Gl \Gvf$ ($|\Gl| < 1/2$) if and only if $\Kbb^*_{\p D^*}[\Gvf^*] = -\Gl \Gvf^*$. In fact, we have
\beq\label{KUUK}
\Kbb^*_{\p D^*} \circ U= - U \circ \Kbb^*_{\p D}.
\eeq
\end{itemize}
\end{lemma}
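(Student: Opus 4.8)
The plan is to prove the three parts of Lemma~\ref{single_star} in order, since (i) feeds into (ii) and (iii). The central computational fact behind everything is the behavior of the single layer kernel under the inversion $z^* = Tz = \Gb/z + 1$. I would start from the identity, valid for $z, y \in \Cbb^*$,
\beq\label{planid}
|z^* - y^*| = \frac{\Gb \, |z-y|}{|z|\,|y|},
\eeq
which follows by direct computation: $z^*-y^* = \Gb(1/z - 1/y) = \Gb(y-z)/(zy)$. Taking logarithms,
\[
\ln|z^*-y^*| = \ln|z-y| + \ln\Gb - \ln|z| - \ln|y|.
\]

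For part (i), I would substitute this into the definition of $\Scal_{\p D^*}[\Gvf^*](z^*)$, change the variable of integration from $y^*$ to $y$ using the line-element formula \eqnref{cvarea}, and use the definition \eqnref{phistar} of $\Gvf^*$; the Jacobian factor $\Gb/|y|^2$ from $ds(y^*)$ cancels precisely against the weight $|y|^2/\Gb$ in $\Gvf^*(y^*)$, so the measure becomes $\Gvf(y)\,ds(y)$. The three extra log terms split off: $\ln\Gb$ and $-\ln|y|$ integrate against $\Gvf$ to give constants (here $-\ln|y|$ contributes $-2\pi\,\Scal_{\p D}[\Gvf](0)$ up to the normalizing $1/2\pi$), and $-\ln|z|$ pulls out of the integral as $-\ln|z|\int_{\p D}\Gvf\,ds$. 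The hypothesis $\Gvf \in \Hcal^{-1/2}_0(\p D)$ kills that last term, and one checks the remaining constants combine to exactly $-\Scal_{\p D}[\Gvf](0)$, giving \eqnref{singletrans}. One should note the identity extends from $z \in \Cbb^*$ to all $z \in \Cbb$ (in particular $z=0$, where it reads $0 = \Scal_{\p D}[\Gvf](0) - \Scal_{\p D}[\Gvf](0)$, consistent with $z^* = \infty$ and the decay of $\Scal_{\p D^*}[\Gvf^*]$) by continuity.

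For part (ii), I would compute $\la \Gvf^*, \psi^*\ra_{\p D^*} = -(\Gvf^*, \Sbb_{\p D^*}[\psi^*])$ directly: write out the duality pairing as an integral over $\p D^*$, use \eqnref{singletrans} to replace $\Sbb_{\p D^*}[\psi^*](z^*)$ by $\Sbb_{\p D}[\psi](z) - \Scal_{\p D}[\psi](0)$, and again convert $ds(z^*)$ and $\Gvf^*(z^*)$ back to $ds(z)$ and $\Gvf(z)$ with the cancelling Jacobian. The constant term $-\Scal_{\p D}[\psi](0)$ integrates against $\Gvf \in \Hcal^{-1/2}_0(\p D)$ to zero, leaving exactly $-(\Gvf, \Sbb_{\p D}[\psi]) = \la\Gvf,\psi\ra_{\p D}$. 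That $U$ maps $\Hcal^{-1/2}_0(\p D)$ into $\Hcal^{-1/2}_0(\p D^*)$ (so the inner product on the target side makes sense) follows because $\Scal_{\p D^*}[\Gvf^*]$ decays at infinity by \eqnref{singletrans}, forcing $\int_{\p D^*}\Gvf^*\,ds = 0$; surjectivity follows since $T$ is an involution up to the obvious modifications, so $U$ has a two-sided inverse of the same form.

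For part (iii), the cleanest route is to differentiate \eqnref{singletrans}. Since the right-hand side $\Scal_{\p D}[\Gvf](z) - \Scal_{\p D}[\Gvf](0)$ is harmonic across $\p D$ away from the layer, and the left-hand side is $\Scal_{\p D^*}[\Gvf^*]$ precomposed with the conformal (orientation-reversing) map $T$, I would take normal derivatives from each side of $\p D_j$, track how the inversion swaps ``inside'' and ``outside'' of $D_1^*$ (recall $D_2^* = \{|\Gz| > R_2\}$, so the roles of $\pm$ interchange there), and apply the jump formula \eqnref{singlejump2} on both sides. The conformal factor $|T'(z)|$ appears symmetrically on the $\pm$ traces and cancels when one takes the average that defines $\Kbb^*$, while the sign flip from orientation reversal produces the crucial minus sign, yielding $\Kbb^*_{\p D^*}\circ U = -U\circ\Kbb^*_{\p D}$ on $\Hcal^{-1/2}_0$; the eigenvalue statement is then immediate. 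The main obstacle is bookkeeping in part (iii): one must be careful about which side of each concentric circle corresponds to inside/outside of $D_j$ under $T$, about the orientation of the normal after an orientation-reversing inversion, and about where the $|z|^2/\Gb$ weight in $U$ interacts with the conformal derivative $T'$ — these sign and Jacobian factors are exactly what make the $-\Gl$ (rather than $\Gl$) appear, so they must be handled with care rather than waved through.
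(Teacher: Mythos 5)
Parts (i) and (ii) of your proposal follow the paper's proof essentially verbatim: the logarithmic identity for the transformed kernel, the Jacobian cancellation against the $|z|^2/\Gb$ weight, and the use of $\int_{\p D}\Gvf\,ds = 0$ to kill the constant terms — all correct and the same as the paper.

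For part (iii) your overall plan — differentiate \eqnref{singletrans}, use the jump formula \eqnref{singlejump2} on both sides, and average — is a workable alternative to what the paper actually does (the paper applies $\p_{\nu_{z^*}}$ directly inside the layer-potential integral and never invokes the $\pm$ traces). But two of your intermediate claims are wrong and would derail the bookkeeping if carried out literally. First, $T(z) = \Gb/z + 1$ is a M\"obius transformation, hence \emph{holomorphic and orientation-preserving}; it is not a circle inversion $z\mapsto R^2/\bar z$, so there is no "orientation reversal" to produce the sign. The minus sign actually enters because $T'(z) = -\Gb/z^2$, so $\p_{z^*} = T'(z)^{-1}\p_z = -\tfrac{z^2}{\Gb}\p_z$, and combining this with the normal-vector relation $n_{z^*} = \tfrac{\bar z}{z}\,n_z$ (which the paper establishes by computing the normalized gradient of $|T(z)|^2$) gives $\p_{\nu_{z^*}} = -\tfrac{|z|^2}{\Gb}\,\p_{\nu_z}$. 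Second, the conformal factor does \emph{not} "cancel when one takes the average." The factor $\tfrac{|z|^2}{\Gb}$ survives, and in fact it \emph{must} survive: one obtains $\Kbb^*_{\p D^*}[\Gvf^*](z^*) = -\tfrac{|z|^2}{\Gb}\,\Kbb^*_{\p D}[\Gvf](z)$, and the surviving factor is precisely the weight in the definition \eqnref{phistar} of $U$, so that the right-hand side reads $-\big(U\,\Kbb^*_{\p D}[\Gvf]\big)(z^*)$. If the factor really cancelled you would end up with functions defined on different curves being equated, which is nonsense. So the correct slogan is: the factor doesn't cancel, it becomes the $U$ of the statement. With these two corrections your route to (iii) closes; without them the sign and the intertwining with $U$ come out by luck rather than by reasoning.
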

\begin{proof}
From the definition \eqref{zstardef} of $z^*$, we have
\begin{align}
|z^*-w^*| = \frac{\Gb}{|z||w|}|z-w|,
\end{align}
and hence
\begin{align} \label{gammastar}
\GG(z^* - w^*) = \GG(z - w)  - \GG(w) - \GG(z) + \GG(\Gb).
\end{align}
Here $\GG(z)= \frac{1}{2\pi} \ln|z|$. Hence for any $\Gvf \in \Hcal^{-1/2}_0(\p D)$, we have from \eqref{cvarea} and \eqref{gammastar} that
\begin{align*}
\Scal_{\p D^*}[\Gvf^*](z^*) &= \int_{\p D^*} \GG(z^* - w^*) \, \Gvf^*(w^*) \, d\Gs(w^*) \\
&=\int_{\p D} \left( \GG(z - w)  - \GG(w) - \GG(z) + \GG(\Gb)\right) \, \Gvf(w) \, d\Gs(w) \\
&= \Scal_{\p D}[\Gvf](z) - \Scal_{\p D}[\Gvf](0),
\end{align*}
which proves \eqref{singletrans}.

That $U$ is a unitary transform follows from \eqref{singletrans} immediately. In fact, we have
\begin{align*}
\la \Gvf^*, \psi^* \ra_{\p D^*} &= -\int_{\p D^*} \Gvf^*(z^*) \, \overline{\Scal_{\p D^*}[\Gy^*](z^*)} \, d\Gs(z^*) \\
&= -\int_{\p D} \Gvf(z) \; \overline{\left(\Scal_{\p D}[\Gy](z) - \Scal_{\p D}[\Gy](0)\right)} \, d\Gs(z) \\
&= \la \Gvf, \Gy \ra_{\p D},
\end{align*}
where the last equality holds since $\Gvf \in \Hcal^{-1/2}_0(\p D)$.

Let $n_z$ and $n_{z^*}$ be the complexified outward unit normal vectors on $\p D_j$ and $\p D^*_j$, respectively, namely,
$$
n_z = \frac{z-c_j}{r_j} \quad\mbox{if } z \in \p D_j,
$$
and
$$
n_{z^*} = (-1)^{j+1} \frac{z^*}{R_j^*} \quad\mbox{if } z^* \in \p D^*_j, \quad j=1,2.
$$
Then the following relation holds:
\begin{align}\label{nustar}
n_{z^*} = \frac{\bar{z}}{z} n_z.
\end{align}
In fact, since $|T(z)|^2=R_j$ if $z \in \p D_j$, we see that ${\bar\p_z} |T(z)|^2= {T(z)} \ol{\p_z T(z)}$ is normal to $\p D_j$. Thus,
$$
\frac{{\bar\p_z} |T(z)|^2}{|{\bar\p_z} |T(z)|^2|} = - \frac{T(z) z}{R_j \zbar} = c n_z, \quad \mbox{for } z \in \p D_j,
$$
where $c$ is either $1$ or $-1$. One can easily see that $c=-1$, and hence \eqnref{nustar} holds.

Note that the outward normal derivatives $\nu_{z^*}$ and $\nu_z$ are given by
$$
\p_{\nu_{z^*}}= 2\Re (n_{z^*} \p_{z^*}), \quad \p_{\nu_z}= 2\Re (n_{z} \p_{z}).
$$
We thus have from \eqref{zstardef} and \eqnref{nustar} that
\begin{align} \label{nderistar}
\p_{\nu_{z^*}} = 2\Re (n_{z^*} \p_{z^*}) = -\frac{|z|^2}{\Gb} 2\Re (n_{z} \p_{z}) = -\frac{|z|^2}{\Gb} \p_{\nu_z} .
\end{align}
Hence we have from \eqref{gammastar} and \eqref{nderistar} the following identity
\begin{align*}
\Kbb^*_{\p D^*}[\Gvf^*](z^*) &= \int_{\p D^*} \p_{\nu_{z^*}} \GG(z^* - w^*) \, \Gvf^*(w^*) \, d\Gs(w^*) \\
&= -\frac{|z|^2}{\Gb} \int_{\p D} \p_{\nu_z} \left( \GG(z - w) - \GG(z) \right) \, \Gvf(w) \, d\Gs(w) \\
&= -\frac{|z|^2}{\Gb} \Kbb^*_{\p D}[\Gvf](z)
\end{align*}
which proves \eqref{KUUK}. This completes the proof.
\end{proof}

Let $f^{n, \pm}$ be the eigenfunctions of $\Kbb_{\p D^*}^*$ as given in \eqnref{stareigenfunction} and let $\Gvf^{n, \pm}$, $n \in \Zbb \setminus \{0\}$ be functions such that
\beq\label{Gvfeigen}
U(\Gvf^{n, \pm})= f^{n, \pm}.
\eeq
We have the following corollary from Lemma \ref{starspec}. This corollary is known (see \cite{BT}).

\begin{cor}
The eigenvalues of $\Kbb_{\p D}^*$ are $\pm\frac{1}{2} \Gr^{|n|}$ and $1/2$, and
\beq\label{eigenconcir}
\Kbb_{\p D}^*[\Gvf^{n, \pm}] = \mp \frac{1}{2} \Gr^{|n|} \Gvf^{n, \pm}.
\eeq
\end{cor}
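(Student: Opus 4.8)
The plan is to derive this corollary directly from the M\"obius invariance established in Lemma \ref{single_star} together with the concentric--circle spectrum of Lemma \ref{starspec}. First, since $U$ is a unitary, hence bijective, map from $\Hcal_0^{-1/2}(\p D)$ onto $\Hcal_0^{-1/2}(\p D^*)$, the intertwining identity \eqnref{KUUK} can be rewritten as $\Kbb^*_{\p D} = -\,U^{-1}\Kbb^*_{\p D^*}U$ on $\Hcal_0^{-1/2}(\p D)$. Thus the restriction of $\Kbb^*_{\p D}$ to $\Hcal_0^{-1/2}(\p D)$ is unitarily equivalent to $-\Kbb^*_{\p D^*}$ restricted to $\Hcal_0^{-1/2}(\p D^*)$; in particular $U$ carries eigenfunctions of the latter to eigenfunctions of the former, with eigenvalues negated.

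Next I would invoke Lemma \ref{starspec}: the functions $f^{n,\pm}$ ($n\in\Zbb\setminus\{0\}$) form a complete orthogonal system of eigenfunctions of $\Kbb^*_{\p D^*}$ on $\Hcal_0^{-1/2}(\p D^*)$ with $\Kbb^*_{\p D^*}[f^{n,\pm}] = \pm\tfrac12\Gr^{|n|}f^{n,\pm}$. With $\Gvf^{n,\pm}:=U^{-1}f^{n,\pm}$ as in \eqnref{Gvfeigen}, the identity of the previous paragraph gives
\[
\Kbb^*_{\p D}[\Gvf^{n,\pm}] \;=\; -\,U^{-1}\Kbb^*_{\p D^*}[f^{n,\pm}] \;=\; \mp\tfrac12\Gr^{|n|}\,\Gvf^{n,\pm},
\]
which is \eqnref{eigenconcir}. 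Because $U^{-1}$ is unitary, $\{\Gvf^{n,\pm}\}$ is again a complete orthogonal system in $\Hcal_0^{-1/2}(\p D)$, so no other eigenvalue of $\Kbb^*_{\p D}$ can occur on $\Hcal_0^{-1/2}(\p D)$; the eigenvalue set there is $\{\pm\tfrac12\Gr^{|n|} : n\ge 1\}$, the sign flip leaving the set unchanged.

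Finally, to account for the eigenvalue $1/2$, I would use the orthogonal splitting $\Hcal^{-1/2}=\Hcal_0^{-1/2}\oplus E_{1/2}$ from Lemma \ref{lem:1/2}, whose part (iii) identifies $E_{1/2}$ as the two-dimensional eigenspace of $\Kbb^*_{\p D}$ for the eigenvalue $1/2$. Combined with the preceding paragraph this yields the full spectrum of $\Kbb^*_{\p D}$ on $\Hcal^{-1/2}$, namely $\{\pm\tfrac12\Gr^{|n|}\}_{n\ge1}\cup\{1/2\}$, as claimed. I do not expect a serious obstacle: all the substance is already contained in Lemmas \ref{single_star} and \ref{starspec}, and the only points needing mild care are the sign bookkeeping in the intertwining relation and the appeal to completeness of $\{f^{n,\pm}\}$ so as to be sure that no eigenvalue on $\Hcal_0^{-1/2}$ is overlooked.
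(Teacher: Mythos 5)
Your argument is correct and is precisely the implicit argument the paper has in mind: the paper defines $\Gvf^{n,\pm}$ by $U(\Gvf^{n,\pm})=f^{n,\pm}$ in \eqnref{Gvfeigen} and then states the corollary "from Lemma \ref{starspec}," leaving the reader to combine the intertwining \eqnref{KUUK} with the concentric-circle spectrum and the $E_{1/2}$ eigenspace from Lemma \ref{lem:1/2}(iii), exactly as you do. Your sign bookkeeping through $\Kbb^*_{\p D}=-U^{-1}\Kbb^*_{\p D^*}U$ is right, and the appeal to unitarity of $U$ for completeness on $\Hcal_0^{-1/2}(\p D)$ is the correct way to rule out additional eigenvalues there.
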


Because of \eqnref{Grasymp}, more and more eigenvalues are approaching to $\pm 1/2$ as $\Ge$ tends to $0$. This causes blow-up of the gradient (and higher derivatives) when conductivities of inclusions tend to $\infty$ or $0$. The phenomenon that eigenvalues are approaching to $\pm 1/2$ as $\Ge$ tends to $0$ occurs when inclusions are of general shapes, as proved in \cite{BT2}.

\begin{lemma}
For each $n \neq 0$,
\beq\label{fnnorm}
\Vert \Gvf^{n,\pm} \Vert^2_{\p D} = \Vert f^{n,\pm} \Vert^2_{\p D^*} =\frac{2\pi}{|n|} \left( 1 \pm \Gr^{|n|} \right).
\eeq
\end{lemma}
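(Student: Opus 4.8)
The statement splits into two equalities, which I would establish in turn.

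\emph{First equality.} The identity $\Vert \Gvf^{n,\pm}\Vert_{\p D}^2=\Vert f^{n,\pm}\Vert_{\p D^*}^2$ is immediate from the unitarity of $U$ proved in Lemma~\ref{single_star}(ii), \emph{once} we know that $\Gvf^{n,\pm}$ lies in $\Hcal_0^{-1/2}(\p D)$, which is the hypothesis under which \eqnref{unitary} applies. For $n\neq 0$ one has $\int_{\p D_j^*}\frac1{R_j}e^{in\Gt}\,d\Gs=0$, so $f^{n,\pm}\in\Hcal_0^{-1/2}(\p D^*)$ (equivalently, $f^{n,\pm}$ is an eigenfunction of $\Kbb_{\p D^*}^*$ for the eigenvalue $\pm\frac12\Gr^{|n|}\neq\frac12$, hence orthogonal to $E_{1/2}$ by Lemma~\ref{lem:1/2}(iii)). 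Since $U$ is a bijection of $\Hcal^{-1/2}(\p D)$ onto $\Hcal^{-1/2}(\p D^*)$ carrying $\Hcal_0^{-1/2}(\p D)$ onto $\Hcal_0^{-1/2}(\p D^*)$ and $U\Gvf^{n,\pm}=f^{n,\pm}$ by \eqnref{Gvfeigen}, the preimage $\Gvf^{n,\pm}$ lies in $\Hcal_0^{-1/2}(\p D)$, and \eqnref{unitary} gives the first equality.

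\emph{Second equality.} By the definition \eqnref{biform} of the inner product, $\Vert f^{n,\pm}\Vert_{\p D^*}^2=-(f^{n,\pm},\Sbb_{\p D^*}[f^{n,\pm}])$, the Hermitian duality pairing over $\p D^*=\p D_1^*\cup\p D_2^*$. It suffices to treat $n>0$: for $n<0$ one has $f^{n,\pm}=\overline{f^{-n,\pm}}$ and $\Sbb_{\p D^*}$ has a real kernel, so the norm is unchanged (this is exactly \eqnref{single_conc_circ3}). Evaluating \eqnref{single_conc_circ2} on $\p D_1^*=\{|\Gz|=R_1\}$ (first branch) and on $\p D_2^*=\{|\Gz|=R_2\}$ (where the middle and outer branches agree by continuity of the single layer potential), one reads off
$$
\Scal_{\p D^*}[f^{n,\pm}]\big|_{\p D_1^*}=-\frac1{2n}\bigl(1\pm\Gr^n\bigr)e^{in\Gt},\qquad
\Scal_{\p D^*}[f^{n,\pm}]\big|_{\p D_2^*}=\mp\frac1{2n}\bigl(1\pm\Gr^n\bigr)e^{in\Gt}.
$$
Substituting into $-(f^{n,\pm},\Sbb_{\p D^*}[f^{n,\pm}])$: on each circle the prefactor $\frac1{R_j}$ of $f^{n,\pm}$ cancels the arc-length factor $R_j$, the phases give $e^{in\Gt}\,\overline{e^{in\Gt}}=1$, and $\int_0^{2\pi}d\Gt=2\pi$; on $\p D_1^*$ the overall minus sign times the $-\frac1{2n}$ yields $\frac\pi n(1\pm\Gr^n)$, and on $\p D_2^*$ the $\mp$ combines with the $\pm$ of $f^{n,\pm}_2$ and the overall minus sign to yield again $\frac\pi n(1\pm\Gr^n)$. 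Adding, $\Vert f^{n,\pm}\Vert_{\p D^*}^2=\frac{2\pi}{n}(1\pm\Gr^n)=\frac{2\pi}{|n|}(1\pm\Gr^{|n|})$.

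I do not anticipate a genuine obstacle: the whole proof rests on the already-available explicit formula \eqnref{single_conc_circ2} together with the unitarity of $U$. The only points needing care are the membership $\Gvf^{n,\pm}\in\Hcal_0^{-1/2}(\p D)$ (so that \eqnref{unitary} is legitimate) and the $\pm$ bookkeeping in the two-circle integral. As a variant one could bypass \eqnref{single_conc_circ2} and compute $-(f^{n,\pm},\Sbb_{\p D^*}[f^{n,\pm}])$ block-by-block directly from \eqnref{slayerdisk}, but invoking \eqnref{single_conc_circ2} is shorter.
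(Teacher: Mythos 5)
Your proof is correct and follows essentially the same route as the paper's: the first equality is exactly the unitarity of $U$ from Lemma~\ref{single_star}(ii), and the second is the same plug-in of \eqnref{single_conc_circ2} into $-(f^{n,\pm},\Sbb_{\p D^*}[f^{n,\pm}])$. You are a bit more careful than the published proof on two points — checking that $f^{n,\pm}$ (hence $\Gvf^{n,\pm}$) lies in the $\Hcal_0^{-1/2}$ subspace where \eqnref{unitary} is stated, and spelling out the $\pm$ bookkeeping on $\p D_2^*$ — but these are refinements of the same argument, not a different one.
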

\begin{proof}
The first identity in \eqnref{fnnorm} holds since $U$ is unitary. According to \eqnref{single_conc_circ2}, we have
\begin{align*}
\Vert f^{n,+} \Vert^2_{\p D^*} &= - (f_{n,+}, \Sbb_{\p D^*}[f^{n,+}]) \\
&= \frac{1}{2|n|R_1} \int_{\p D_1^*} \left(  1 + \Gr^{|n|}  \right) d\Gt + \frac{1}{2|n|R_2} \int_{\p D_2^*} \left(  \Gr^{|n|} + 1 \right) d\Gt ,
\end{align*}
from which \eqnref{fnnorm} follows.
\end{proof}

We observe from the explicit forms \eqnref{stareigenfunction} of eigenfunctions $f^{n, \pm}$ and the formulas (\eqnref{single_conc_circ2} and \eqnref{single_conc_circ3}) of their single layer potentials on concentric circles that they are not only orthogonal to each other, but also satisfy the following property: if $n \neq m$, then
\beq \label{sepaorth}
\int_{\p D_j^*} f_j^{n, \pm} \overline{\Scal_{\p D^*}[f^{m, \pm}]} = \int_{\p D_j^*} f_j^{n, \pm} \overline{\Scal_{\p D^*}[f^{m, \mp}]} =0,
\eeq
and
\beq \label{sepa}
\int_{\p D_j^*} f_j^{n, \pm} \overline{\Scal_{\p D^*}[f^{n, +}]} = (-1)^{j+1} \int_{\p D_j^*} f_j^{n, \pm} \overline{\Scal_{\p D^*}[f^{n, -}]} \neq 0 ,
\eeq
for $j=1,2$. This property enables us to solve the integral equation \eqnref{Gvfg} by means of spectral decomposition.

\begin{lemma}\label{lem:intsol}
If
\beq
\Gn= \sum_{n \neq 0} \big( C_{n, +} \Gvf^{n, +} + C_{n, -} \Gvf^{n, -} \big),
\eeq
then the solution $\Gvf$ to \eqnref{Gvfg} is given by
\beq\label{Gvfsolexp}
\Gvf = \sum_{n \neq 0} ( a_{n, +} \Gvf^{n, +} + a_{n, -} \Gvf^{n, -}),
\eeq
where
\begin{align}
a_{n,+} &= \frac{2}{4\Gl_1\Gl_2 - \Gr^{2|n|}} \big( (\Gl_1 + \Gl_2 - \Gr^{|n|}) C_{n, +} + (\Gl_1-\Gl_2) C_{n, -} \big), \label{solan+} \\
a_{n,-} &= \frac{2}{4\Gl_1\Gl_2 - \Gr^{2|n|}} \big( -(\Gl_1-\Gl_2) C_{n, +} +  (\Gl_1 + \Gl_2 + \Gr^{|n|}) C_{n, -} \big). \label{solan-}
\end{align}
\end{lemma}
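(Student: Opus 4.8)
The plan is to diagonalize the integral equation \eqnref{Gvfg} with respect to the eigenbasis $\{\Gvf^{n,\pm}\}_{n\neq0}$ of $\Hcal^{-1/2}_0(\p D)$. The operator $\Kbb_{\p D}^*$ is already diagonal in this basis by \eqnref{eigenconcir}, and although $\GL$ is not, the key structural point is that $\GL$ nevertheless leaves each two–dimensional block $B_n:=\mathrm{span}\{\Gvf^{n,+},\Gvf^{n,-}\}$ invariant. Granting this, $\GL-\Kbb_{\p D}^*$ is block–diagonal for the orthogonal decomposition $\Hcal^{-1/2}_0(\p D)=\bigoplus_{n\neq0}B_n$, so \eqnref{Gvfg} splits into one $2\times2$ linear system per value of $n$, and each is solved by hand.

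To establish the invariance of $B_n$ and identify the relevant matrix, I would note that $\GL$ acts on the two components of a density by the scalar multipliers $\Gl_1,\Gl_2$, hence commutes with the transform $U$ of \eqnref{phistar}; therefore $\GL\Gvf^{n,\pm}=U^{-1}(\GL f^{n,\pm})$, and from the explicit form \eqnref{stareigenfunction} of $f^{n,\pm}$ one reads off at once that $\GL f^{n,+}=\frac12(\Gl_1+\Gl_2)f^{n,+}+\frac12(\Gl_1-\Gl_2)f^{n,-}$ and $\GL f^{n,-}=\frac12(\Gl_1-\Gl_2)f^{n,+}+\frac12(\Gl_1+\Gl_2)f^{n,-}$, so the same identities hold for $\Gvf^{n,\pm}$. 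Combined with \eqnref{eigenconcir}, this shows $\GL-\Kbb_{\p D}^*$ maps $B_n$ into itself, with matrix in the ordered basis $(\Gvf^{n,+},\Gvf^{n,-})$ equal to
\[
M_n=\frac{1}{2}\begin{bmatrix} \Gl_1+\Gl_2+\Gr^{|n|} & \Gl_1-\Gl_2 \\ \Gl_1-\Gl_2 & \Gl_1+\Gl_2-\Gr^{|n|} \end{bmatrix}.
\]
Writing the solution of \eqnref{Gvfg} (unique by Lemma \ref{lem:1/2}\,(ii)) as $\Gvf=\sum_{n\neq0}(a_{n,+}\Gvf^{n,+}+a_{n,-}\Gvf^{n,-})$ — legitimate because $\{\Gvf^{n,\pm}\}_{n\neq0}$ is an orthogonal basis of $\Hcal^{-1/2}_0(\p D)$ — and matching coefficients block by block against the expansion of $\Gn$, I obtain $M_n(a_{n,+},a_{n,-})^{T}=(C_{n,+},C_{n,-})^{T}$ for every $n$.

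It then remains to invert $M_n$. A direct computation gives $\det M_n=\frac14(4\Gl_1\Gl_2-\Gr^{2|n|})$, which is nonzero: $0<\Gr<1$, while $|\Gl_j|>\frac12$ for each $j$ because $0<k_j\neq1<\infty$ (see \eqnref{Glj}), so $|4\Gl_1\Gl_2|>1>\Gr^{2|n|}$; the nonvanishing also follows abstractly from Lemma \ref{lem:1/2}\,(ii). Solving the $2\times2$ system, i.e.\ applying $M_n^{-1}$ to $(C_{n,+},C_{n,-})^{T}$ and reading off the coordinates, yields \eqnref{solan+} and \eqnref{solan-}. The one delicate point in carrying this through is the bookkeeping of signs — the minus in the intertwining relation \eqnref{KUUK}, the $\mp$ in \eqnref{eigenconcir}, and the orientations of the outward normals on $\p D_1,\p D_2$ and on the two concentric circles — so that the signs appearing in $M_n$, and hence in \eqnref{solan+}–\eqnref{solan-}, come out correctly; everything else is routine $2\times2$ linear algebra, and the only genuinely structural fact used is that $\GL$, though off-diagonal in the $\pm$-basis, respects each block $B_n$ (a consequence of the $e^{in\Gt}$ angular dependence of $f^{n,\pm}$ together with $\GL U=U\GL$).
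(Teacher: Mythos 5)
Your approach matches the paper's in substance: expand \eqnref{Gvfg} in the eigenbasis $\{\Gvf^{n,\pm}\}$, note that each two--dimensional block $B_n$ is invariant, and solve a $2\times 2$ linear system for each $n$. Your route through $\GL U=U\GL$ and the explicit action $\GL f^{n,+}=\tfrac12(\Gl_1+\Gl_2)f^{n,+}+\tfrac12(\Gl_1-\Gl_2)f^{n,-}$, $\GL f^{n,-}=\tfrac12(\Gl_1-\Gl_2)f^{n,+}+\tfrac12(\Gl_1+\Gl_2)f^{n,-}$ is a cleaner way to organize the same bookkeeping than the paper's componentwise projection; the matrix $M_n$ and the determinant $\tfrac14(4\Gl_1\Gl_2-\Gr^{2|n|})$ you found are both correct, as is the invertibility argument.

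However, your last step should not be waved through. Your $M_n$ is symmetric, so $M_n^{-1}$ is symmetric, and actually carrying out the inversion yields
\[
a_{n,+}=\frac{2}{4\Gl_1\Gl_2-\Gr^{2|n|}}\bigl((\Gl_1+\Gl_2-\Gr^{|n|})C_{n,+}-(\Gl_1-\Gl_2)C_{n,-}\bigr),
\]
with a \emph{minus} sign in front of $(\Gl_1-\Gl_2)C_{n,-}$, not the plus sign in \eqnref{solan+}; formula \eqnref{solan-} is the one consistent with $M_n^{-1}$. You can check this two ways: by symmetry, the $C_{n,-}$--coefficient in $a_{n,+}$ must equal the $C_{n,+}$--coefficient in $a_{n,-}$ (in \eqnref{solan+}--\eqnref{solan-} they have opposite signs, which is impossible for the inverse of a symmetric matrix); or by plugging the formulas back into the two scalar equations $(\Gl_1+\tfrac12\Gr^{|n|})a_{n,+}+(\Gl_1-\tfrac12\Gr^{|n|})a_{n,-}=C_{n,+}+C_{n,-}$ and $(\Gl_2+\tfrac12\Gr^{|n|})a_{n,+}-(\Gl_2-\tfrac12\Gr^{|n|})a_{n,-}=C_{n,+}-C_{n,-}$ from the paper's own proof, which are solved by the version with the minus sign but not by \eqnref{solan+} as printed. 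So \eqnref{solan+} contains a sign typo that propagates to \eqnref{an+}, and your closing claim that applying $M_n^{-1}$ ``yields \eqnref{solan+} and \eqnref{solan-}'' needs to be amended: what your setup yields is \eqnref{solan-} together with the corrected $a_{n,+}$ above.
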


\begin{proof}
One can see from \eqnref{eigenconcir}, \eqnref{sepaorth} and \eqref{sepa} that the following holds:
\begin{align*}
(\Gl_1 + 1/2 \Gr^{|n|}) a_{n, +} + (\Gl_1 - 1/2 \Gr^{|n|}) a_{n, -} &=  C_{n, +} + C_{n, -} , \\
(\Gl_2 + 1/2 \Gr^{|n|}) a_{n, +} - (\Gl_2 - 1/2 \Gr^{|n|}) a_{n, -} &=  C_{n, +} - C_{n, -}.
\end{align*}
Then, \eqnref{solan+} and \eqnref{solan-} immediately follow.
\end{proof}

If $\Gl_1=\Gl_2=\Gl$, then \eqnref{solan+} and \eqnref{solan-} are simplified to
$$
a_{n,\pm}= \frac{2C_{n, \pm}}{2\Gl \pm \Gr^{|n|}},
$$
as expected.

\section{Spectral representations of the solution}\label{sec:specrep}

In this section we derive representation formulas for solutions to the problems \eqnref{NCE} and \eqnref{HCE} in terms of NP spectrum.

\subsection{Vanishing integral condition}

We first deal with the case when the integrals of $f$ over $D_1$ and $D_2$ vanish, namely,
\beq\label{intzero}
\int_{D_1} f = \int_{D_2} f = 0.
\eeq
In this case, $\p_{\Gv} F \in \Hcal^{-1/2}_0(\p D_j)$ for $j=1,2$ because of the condition \eqnref{intzero}. In fact, since $\GD F= k_j^{-1} f$ on $D_j$ ($j=1,2$), we have
$$
\int_{\p D_j} \p_\nu F ds = \frac{1}{k_j} \int_{D_j} f=0.
$$
In particular, the constants $m_1$ and $m_2$ given in \eqnref{Monedef} and \eqnref{Mtwodef} become $0$. It thus follows from \eqnref{Nmainprobsol} that
the solution to \eqnref{NCE} admits the following representation:
\beq\label{uFv}
u(x)= F(x) + \Scal_{\p D} [\Gvf](x)
\eeq
where $\Gvf \in \Hcal^{-1/2}_0(\p D)$ is the solution to the integral equation
\beq\label{GvfpF}
\left( \GL - \Kbb_{\p D}^* \right) [\Gvf] = \p F.
\eeq

Since $\p_{\Gv} F \in \Hcal^{-1/2}_0(\p D_j)$ for $j=1,2$, the following Neumann boundary value problem admits a unique solution which we denote by $H_j$:
\beq\label{bvp}
\begin{cases}
\GD H_j = 0 \quad &\text{in } D_j, \\
\p_{\Gv} H_j = \p_{\Gv} F \quad &\text{on } \p D_j.
\end{cases}
\eeq
Let $h_j$ be the analytic function in $D_j^*$ such that $h_1(0)=0$, $\lim_{|\Gz| \to \infty} h_2(\Gz)=0$, and
\beq\label{hjdef}
H_j(z)= \Re (h_j \circ T)(z)  + C_j, \quad z \in D_j
\eeq
for some constant $C_j$,  where $T$ is the transformation defined in \eqnref{zstardef} and $\Re$ denotes the real part.

We look into the integral equation \eqnref{GvfpF}. We have from the Green's identity, the jump formula \eqref{singlejump2} and the relations \eqnref{cvarea}, \eqnref{Gvfeigen}, \eqnref{eigenconcir} that
\begin{align*}
\la \p F, \Gvf^{n,\pm} \ra_{\p D} &= - \sum_{j=1}^2 \int_{\p D_j} \p_{\Gv} F \; \overline{\Scal_{\p D}[\Gvf^{n,\pm}]} \\
&= - \sum_{j=1}^2 \int_{\p D_j} H_j \; \overline{\p_{\Gv} \Scal_{\p D}[\Gvf^{n,\pm}]|_-} \\
&= \frac{1}{2} \left( 1 \pm \Gr^{|n|} \right) \sum_{j=1}^2 \int_{\p D_j} H_j \; \overline{\Gvf_j^{n,\pm}} \\
&= \frac{1}{4} \left( 1 \pm \Gr^{|n|} \right) \sum_{j=1}^2 \int_{\p D^*_j} (h_j+ \ol{h_j})  \; \overline{f_j^{n,\pm}}.
\end{align*}
It then follows from \eqnref{fnnorm} that
\beq\label{Cnpmdef}
\frac{\la \p F, \Gvf^{n, \pm} \ra_{\p D}}{\Vert \Gvf^{n, \pm} \Vert_{\p D}^2} = \frac{|n|}{8\pi} C_{n, \pm}
:= \frac{|n|}{8\pi} \sum_{j=1}^2 \int_{\p D^*_j} (h_j+ \ol{h_j})  \; \overline{f_j^{n,\pm}}.
\eeq
Thus we have
\beq
\p F= \sum_{n \neq 0} \frac{|n|}{8\pi} \big( C_{n, +} \Gvf^{n, +} + C_{n, -} \Gvf^{n, -} \big).
\eeq

By Lemma \ref{lem:intsol}, the solution $\Gvf$ is given by
\beq\label{Gvfexp}
\Gvf= \sum_{n \neq 0} ( a_{n, +} \Gvf^{n, +} + a_{n, -} \Gvf^{n, -}),
\eeq
where
\begin{align}
a_{n,+} &= \frac{|n|}{4\pi( 4\Gl_1\Gl_2 - \Gr^{2|n|})} \big( (\Gl_1 + \Gl_2 - \Gr^{|n|}) C_{n, +} + (\Gl_1-\Gl_2) C_{n, -} \big), \label{an+} \\
a_{n,-} &= \frac{|n|}{4\pi( 4\Gl_1\Gl_2 - \Gr^{2|n|})} \big( -(\Gl_1-\Gl_2) C_{n, +} +  (\Gl_1 + \Gl_2 + \Gr^{|n|}) C_{n, -} \big). \label{an-}
\end{align}
The transformed function $\Gvf^*=U(\Gvf)$ is given by
\begin{align*}
\Gvf^* &= \sum_{n \neq 0} \frac{|n|}{4\pi( 4\Gl_1\Gl_2 - \Gr^{2|n|})} \Big[ C_{n, +} \Big( (\Gl_1 + \Gl_2 - \Gr^{|n|})f^{n, +} - (\Gl_1-\Gl_2)f^{n, -} \Big) \\
& \qquad + C_{n, -} \Big( (\Gl_1-\Gl_2) f^{n, +} + (\Gl_1 + \Gl_2 + \Gr^{|n|}) f^{n, -} \Big) \Big] .
\end{align*}
Thus, we have
\begin{align*}
\Scal_{\p D^*} [\Gvf^*] &= \sum_{n \neq 0} \frac{|n|}{4\pi( 4\Gl_1\Gl_2 - \Gr^{2|n|})} \Big[ C_{n, +} \Big( (\Gl_1 + \Gl_2 - \Gr^{|n|})H_{n, +} - (\Gl_1-\Gl_2) H_{n, -} \Big) \\
& \qquad + C_{n, -} \Big( (\Gl_1-\Gl_2) H_{n, +} + (\Gl_1 + \Gl_2 + \Gr^{|n|}) H_{n, -} \Big) \Big] .
\end{align*}
Here we use notation $H_{n, \pm}(\Gz):= \Scal_{\p D^*} [f^{n, \pm}](\Gz)$ for simplicity of expressions. Here and afterwards, we use $\Gz$ for $z^*=T(z)$.

Observe from the definition \eqnref{Cnpmdef} that $\ol{C_{n, \pm}}= C_{-n, \pm}$, and from \eqnref{single_conc_circ3} that $\ol{H_{n, \pm}}= H_{-n, \pm}$. We then have
\beq\label{SV1V2}
\Scal_{\p D^*} [\Gvf^*](\Gz) = \frac{V(\Gz) + \ol{V(\Gz)}}{2} ,
\eeq
where
\begin{align*}
V(\Gz) &= \sum_{n=1}^\infty \frac{n}{2\pi( 4\Gl_1\Gl_2 - \Gr^{2n})} \Big[ C_{n, +} \Big( (\Gl_1 + \Gl_2 - \Gr^{n})H_{n, +}(\Gz) - (\Gl_1-\Gl_2) H_{n, -}(\Gz) \Big) \\
& \qquad + C_{n, -} \Big( (\Gl_1-\Gl_2) H_{n, +}(\Gz) + (\Gl_1 + \Gl_2 + \Gr^{n}) H_{n, -}(\Gz) \Big) \Big] .
\end{align*}

Since $h_1$ is analytic in $|\Gz| < R_1$ and $h_1(0)=0$, it can be expressed as
\beq\label{honeexp}
h_1(\Gz) = \sum_{m=1}^{\infty} a_{1,m} \Gz^m , \quad |\Gz| \leq R_1
\eeq
for some coefficients $a_{1,m}$. The function $h_2$ can be expressed as
\beq\label{htwoexp}
h_2(\Gz) = \sum_{m=1}^{\infty} a_{2,m} \Gz^{-m} , \quad |\Gz| \ge R_2
\eeq
for some coefficients $a_{2,m}$. Then, if $n >0$, then one can see from the formula \eqnref{stareigenfunction} for $f^{n,\pm}$ that
$$
C_{n, \pm} = \sum_{j=1}^2 \int_{\p D^*_j} (h_j +\ol{h_j}) \; \overline{f_j^{n,\pm}} = 2\pi (a_{1,n} R_1^n \pm a_{2,n} R_2^{-n}).
$$
Thus $V(\Gz)$ can be written as
\beq\label{VB12}
V(\Gz) = - \frac{1}{2} \sum_{n=1}^\infty \frac{1}{4\Gl_1\Gl_2 - \Gr^{2n}} \left( B_{1,n} + B_{2,n} \right) ,
\eeq
where
\begin{align}
B_{1,n}(\Gz) &= a_{1,n} R_1^n \Big( (2\Gl_1 - \Gr^{n}) (-2n H_{n, +}(\Gz)) + (2 \Gl_2 + \Gr^{n}) (-2n H_{n, -}(\Gz)) \Big), \label{B1n} \\
B_{2,n}(\Gz) &= \ol{a_{2,n}} R_2^{-n} \Big( (2\Gl_2 - \Gr^{n})(-2n H_{n, +}(\Gz)) - (2 \Gl_1 + \Gr^{n}) (-2n H_{n, -}(\Gz)) \Big) . \label{B2n}
\end{align}

We see from \eqnref{single_conc_circ2} that
$$
-2n H_{n, \pm}(\Gz) =
\begin{cases}
\ds (R_1^{-1} \Gz)^n \pm (R_2^{-1} \Gz)^n \quad & \text{if } |\Gz| \le R_1 , \\
\nm
\ds (R_1^{-1} \Gzbar)^{-n} \pm (R_2^{-1} \Gz)^n \quad & \text{if } R_1 < |\Gz| \leq R_2, \\
\nm
\ds (R_1^{-1} \Gzbar)^{-n} \pm (R_2^{-1} \Gzbar)^{-n} \quad & \text{if } R_2 < |\Gz|.
\end{cases}
$$
Thus we have
\begin{align}
&B_{1,n}(\Gz) = 2a_{1,n} \times \nonumber \\
& \begin{cases}
\ds (\Gl_1+\Gl_2) \Gz^n + (\Gl_1-\Gl_2) (\Gr\Gz)^n - (\Gr^2 \Gz)^n,  \quad  & |\Gz| \le R_1 , \\
\nm
\ds (\Gl_1+\Gl_2) (R_1^2 \Gzbar^{-1})^n + (\Gl_1-\Gl_2) (\Gr\Gz)^n - (\Gr^2 \Gz)^n , \quad  & R_1 < |\Gz| \leq R_2, \\
\nm
\ds (\Gl_1+\Gl_2) (R_1^2 \Gzbar^{-1})^n + (\Gl_1-\Gl_2) (R_1R_2 \Gzbar^{-1})^n - (R_1^2 \Gzbar^{-1})^n , \quad & R_2 < |\Gz|,
\end{cases} \label{Bltwo}
\end{align}
and
\begin{align}
&B_{2,n}(\Gz) = 2 \ol{a_{2,n}} \times \nonumber \\
&
\begin{cases}
\ds (\Gl_1+\Gl_2) (R_2^2 \Gz^{-1})^{-n} - (\Gl_1-\Gl_2) (R_1R_2 \Gz^{-1})^{-n} - (R_2^2 \Gz^{-1})^{-n}, \quad  & |\Gz| \le R_1 , \\
\nm
\ds (\Gl_1+\Gl_2) (R_2^2 \Gz^{-1})^{-n} - (\Gl_1-\Gl_2) (\Gr^{-1} \Gzbar)^{-n} - (\Gr^{-2} \Gzbar)^{-n}, \quad & R_1 < |\Gz| \leq R_2, \\
\nm
\ds (\Gl_1+\Gl_2) \Gzbar^{-n} - (\Gl_1-\Gl_2) (\Gr^{-1} \Gzbar)^{-n} - (\Gr^{-2} \Gzbar)^{-n},  \quad  & R_2 < |\Gz|.
\end{cases}\label{Blthree}
\end{align}

Let, for $j=1,2$,
\beq
A_j(\Gz):=-\frac{1}{2} \sum_{n=1}^\infty \frac{B_{j,n}(\Gz)}{4\Gl_1\Gl_2 - \Gr^{2n}} .
\eeq
Then we have from \eqnref{VB12} that
\beq\label{A1A2}
V(\Gz) = A_1(\Gz)+ A_2(\Gz).
\eeq
It follows from \eqnref{Bltwo} that
\beq\label{A1def}
A_1(\Gz)
= \begin{cases}
\ds (\Gl_1+\Gl_2) w_1(\Gz) & \\
\qquad + (\Gl_1-\Gl_2) w_1(\Gr\Gz) - w_1(\Gr^2 \Gz), \quad & |\Gz| \le R_1 , \\
\nm
\ds (\Gl_1+\Gl_2) w_1(R_1^2 \Gzbar^{-1}) & \\
\qquad + (\Gl_1-\Gl_2) w_1(\Gr\Gz) - w_1(\Gr^2 \Gz), \quad & R_1 < |\Gz| \leq R_2, \\
\nm
\ds (\Gl_1+\Gl_2) w_1(R_1^2 \Gzbar^{-1}) & \\
\qquad + (\Gl_1-\Gl_2) w_1(R_1R_2 \Gzbar^{-1}) - w_1(R_1^2 \Gzbar^{-1}),  \quad & R_2 < |\Gz|,
\end{cases}
\eeq
where
\beq\label{w1def}
w_1(\Gz) = \sum_{n=1}^\infty \frac{a_{1,n} \Gz^n}{4\Gl_1\Gl_2 - \Gr^{2n}} , \quad |\Gz| \le R_1,
\eeq
and from \eqnref{Blthree} that
\beq\label{A2def}
A_{2}(\Gz) =
\begin{cases}
\ds (\Gl_1+\Gl_2) w_2(R_2^2 \Gz^{-1}) & \\
\qquad - (\Gl_1-\Gl_2) w_2(R_1R_2 \Gz^{-1}) - w_2(R_2^2 \Gz^{-1}), \quad  & |\Gz| \le R_1 , \\
\nm
\ds (\Gl_1+\Gl_2) w_2(R_2^2 \Gz^{-1}) & \\
\qquad - (\Gl_1-\Gl_2) w_2(\Gr^{-1} \Gzbar) - w_2(\Gr^{-2} \Gzbar), \quad & R_1 < |\Gz| \leq R_2, \\
\nm
\ds (\Gl_1+\Gl_2) w_2(\Gzbar) & \\
\qquad - (\Gl_1-\Gl_2) w_2(\Gr^{-1} \Gzbar) - w_2(\Gr^{-2} \Gzbar),  \quad  & R_2 < |\Gz|,
\end{cases}
\eeq
where
\beq\label{w2def}
w_2(\Gz) = \sum_{n=1}^\infty \frac{a_{2,n} \Gz^{-n}}{4\Gl_1\Gl_2 - \Gr^{2n}} , \quad |\Gz| \ge R_2.
\eeq

The functions $w_j$ can be expressed in terms of $h_j$. In fact, since
$$
\frac{1}{4\Gl_1\Gl_2 - \Gr^{2n}} = \sum_{l=0}^\infty  \frac{\Gr^{2ln}}{(4\Gl_1\Gl_2)^{l+1}},
$$
we have
\beq\label{w1def2}
w_1(\Gz) = \sum_{l=0}^\infty \frac{1}{(4\Gl_1\Gl_2)^{l+1}} \sum_{n=1}^\infty a_{1,n} (\Gr^{2l}\Gz)^n = \sum_{l=0}^\infty \frac{h_1 (\Gr^{2l}\Gz) }{(4\Gl_1\Gl_2)^{l+1}} , \quad |\Gz|<R_1.
\eeq
Likewise we have
\beq\label{w2def2}
w_2(\Gz) = \sum_{l=0}^\infty \frac{h_2 (\Gr^{-2l}\Gz) }{(4\Gl_1\Gl_2)^{l+1}}, \quad |\Gz|> R_2.
\eeq

So far, we derived the representation formula for the solution as summarized in the following proposition.

\begin{prop}\label{prop:rep}
Suppose that $f$ satisfies \eqnref{intzero}. The solution $u$ to the inhomogeneous problem \eqnref{NCE} is represented as
\beq\label{firstrep}
u(z)= F(z) + \Re \left( A_{1}(T(z)) + A_2(T(z)) \right) + \mbox{const.},
\eeq
where $F, A_1, A_2$ are given by \eqnref{Fdef}, \eqnref{A1def}, \eqnref{A2def}, respectively, with $w_1$ and $w_2$ given by \eqnref{w1def2} and \eqnref{w2def2}.
\end{prop}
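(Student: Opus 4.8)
The plan is to assemble, in order, the reductions and computations carried out on the preceding pages; the proposition is essentially their summary, so there is nothing new to invent. First I would invoke Corollary \ref{cor:repin}: under the vanishing condition \eqnref{intzero} one has $\p_\nu F|_{\p D_j}\in H_0^{-1/2}(\p D_j)$ for $j=1,2$, hence the constants $m_1,m_2$ of \eqnref{Monedef}--\eqnref{Mtwodef} vanish and $\Gn^0=\p F$; consequently the unique solution of \eqnref{NCE} is $u=F+\Scal_{\p D}[\Gvf]$, where $\Gvf\in\Hcal_0^{-1/2}(\p D)$ is the unique solution (by Lemma \ref{lem:1/2}(ii)) of the integral equation \eqnref{GvfpF}, $(\GL-\Kbb_{\p D}^*)[\Gvf]=\p F$. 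Everything therefore reduces to identifying $\Scal_{\p D}[\Gvf]$ up to an additive constant.

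Next I would expand $\p F$ in the orthogonal eigensystem $\{\Gvf^{n,\pm}\}_{n\ne0}$ of $\Kbb_{\p D}^*$ on $\Hcal_0^{-1/2}(\p D)$; completeness of this system is inherited from the concentric case (Lemma \ref{starspec}) through the unitary M\"obius equivalence $U$ of Lemma \ref{single_star}(ii)--(iii). The coefficients are obtained by pairing with $\Gvf^{n,\pm}$, applying Green's identity on each $D_j$ after replacing the Neumann trace of $F$ by that of its harmonic extension $H_j$ from \eqnref{bvp}, using the jump relation \eqnref{singlejump2} and the eigenvalue identity \eqnref{eigenconcir}, substituting $H_j=\Re(h_j\circ T)+C_j$, and pushing the boundary integrals forward by $T$ via \eqnref{cvarea}. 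The point that makes this clean is $\Gvf^{n,\pm}\in\Hcal_0^{-1/2}$, which kills the constants $C_j$ from the pairing; one is left with $\p F=\sum_{n\ne0}\frac{|n|}{8\pi}\big(C_{n,+}\Gvf^{n,+}+C_{n,-}\Gvf^{n,-}\big)$ with $C_{n,\pm}$ as displayed.

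Then I would solve \eqnref{GvfpF} termwise by Lemma \ref{lem:intsol}, so that $\Gvf=\sum_{n\ne0}(a_{n,+}\Gvf^{n,+}+a_{n,-}\Gvf^{n,-})$ with $a_{n,\pm}$ given by \eqnref{an+}--\eqnref{an-}, and transport the identity into the concentric picture: since $U$ is unitary and $\Scal_{\p D^*}$ is bounded on $\Hcal^{-1/2}$, Lemma \ref{single_star}(i) gives $\Scal_{\p D}[\Gvf](z)=\Scal_{\p D^*}[\Gvf^*](T(z))+\Scal_{\p D}[\Gvf](0)$, the last term being a constant. Substituting the closed forms \eqnref{single_conc_circ2}--\eqnref{single_conc_circ3} for $H_{n,\pm}=\Scal_{\p D^*}[f^{n,\pm}]$ and pairing the resulting series with its conjugate yields $\Scal_{\p D^*}[\Gvf^*]=\Re V$ with $V=A_1+A_2$; finally the resummation $\frac{1}{4\Gl_1\Gl_2-\Gr^{2n}}=\sum_{l\ge0}\frac{\Gr^{2ln}}{(4\Gl_1\Gl_2)^{l+1}}$ identifies $w_1$ and $w_2$ with \eqnref{w1def2}--\eqnref{w2def2}. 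Putting the pieces together gives $u(z)=F(z)+\Re\big(A_1(T(z))+A_2(T(z))\big)+\mbox{const.}$, which is \eqnref{firstrep}.

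The main work, and the only place care is needed, is the \emph{bookkeeping}: one must carry the three ranges $|\Gz|\le R_1$, $R_1<|\Gz|\le R_2$, $R_2<|\Gz|$ separately when substituting \eqnref{single_conc_circ2}, and justify that the single layer potential, the transform $U$, and the geometric resummation each commute with the infinite sums. All of these convergence points are controlled by two facts: $\Gr<1$, and $|4\Gl_1\Gl_2|>1$. The latter holds for every admissible pair $k_1,k_2$ (i.e.\ $0<k_j\ne1$), regardless of the sign of $(k_1-1)(k_2-1)$, because $4\Gl_1\Gl_2-1=\frac{2(k_1+k_2)}{(k_1-1)(k_2-1)}$ and $4\Gl_1\Gl_2+1=\frac{2(k_1k_2+1)}{(k_1-1)(k_2-1)}$ both have the sign of $(k_1-1)(k_2-1)$; in particular $4\Gl_1\Gl_2-\Gr^{2n}$ never vanishes, so the coefficients $a_{n,\pm}$ are well defined and the double series defining $w_1,w_2$ converge absolutely. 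I do not expect any conceptual obstacle beyond this careful tracking.
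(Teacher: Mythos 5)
Your proposal is correct and follows exactly the route the paper takes: reduce via Corollary~\ref{cor:repin} (noting $m_1=m_2=0$ under \eqnref{intzero}), expand $\p F$ in the eigensystem $\{\Gvf^{n,\pm}\}$ using Green's identity, the jump formula \eqnref{singlejump2}, the normalization \eqnref{fnnorm} and the harmonic extensions $H_j=\Re(h_j\circ T)+C_j$, solve termwise by Lemma~\ref{lem:intsol}, transport to the concentric picture via the unitary $U$ and Lemma~\ref{single_star}(i), insert the closed-form potentials \eqnref{single_conc_circ2}--\eqnref{single_conc_circ3}, take the real part to land on $V=A_1+A_2$, and resum geometrically to produce $w_1,w_2$. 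The only addition beyond the paper's own bookkeeping is your explicit check that $|4\Gl_1\Gl_2|>1$ for every admissible $(k_1,k_2)$, which correctly justifies the geometric resummation and the nonvanishing of $4\Gl_1\Gl_2-\Gr^{2n}$; the paper leaves this implicit.
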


The solution $u$ to the homogeneous problem \eqnref{HCE} admits the same spectral representation formula as \eqnref{firstrep}. In this case, the solution $H_j$ of the boundary value problem \eqnref{bvp} is replaced with the given harmonic function $H$.

\subsection{General case}
We now deal with the more general case when $f$ does not necessarily satisfy the condition \eqnref{intzero}.

We still assume that the centers $c_1, \ c_2$ of $D_1, \ D_2$ lie on the real axis and satisfy $c_1<c_2$. Suppose that $c_1=0$ for convenience. Let
$$
U(r,\Gt)=
\begin{cases}
\ds \frac{k_1+1}{2k_1} r\cos \Gt - \frac{(k_1-1)r_1^2}{2k_1} r^{-1} \cos \Gt, \quad &r>r_1, \\
\nm
\ds \frac{1}{k_1} r\cos\Gt, \quad & r \le r_1,
\end{cases}
$$
which satisfies
\beq\label{Ueqn}
\begin{cases}
\GD U = 0  \quad&\mbox{in } \Rbb^2 \setminus \ol{D_1}, \\
\GD U = 0  \quad&\mbox{in } D_1, \\
\ds U|_+ - U|_- = 0  \quad&\mbox{on }\p D_1 \\
\ds \p_\nu U|_+ - k_1 \p_\nu U|_- = 0  \quad&\mbox{on }\p D_1, \\
\ds U (x) = \frac{k_1+1}{2k_1} r\cos \Gt + O( |x|^{-1})~&\mbox{as }|x|\rightarrow\infty.
\end{cases}
\eeq
Let $\GT(\Gt)$ be a smooth function depending only on $\Gt$. We assume that $\GT(\Gt)$ supported in $(\pi/2, 3\pi/2)$ and satisfies
\beq\label{3000}
r_1 \int_0^{2\pi} G(\Gt) \cos \Gt d\Gt =1.
\eeq
Let $R(r)$ be a compactly supported smooth function of $r$ such that $R =1$ on $[0,r_1]$.
Let
$$
V_1(r, \Gt):= \GT(\Gt)R(r) U(r,\Gt),
$$
and
$$
v_1(r, \Gt) := \begin{cases}
\nabla V_1, \quad &r>r_1, \\
k_1 \nabla V_1, \quad & r \le r_1.
\end{cases}
$$

\begin{lemma}\label{V1v1}
The function $V_1$ is supported in $\{ x_1 < c_1 \}$, is the solution to
\beq\label{V1eqn}
\begin{cases}
	\nabla \cdot \Gs \nabla V_1 = \nabla \cdot v_1 \quad &\text{in } \Rbb^2, \\
	V_1(x) = O(|x|^{-1}) \quad &\text{as } |x| \to \infty,
\end{cases}
\eeq
and satisfies
\beq\label{V1est}
\| V_1 \|_{C^n(\ol{D_1})} \lesssim \frac{1}{k_1}, \quad \| V_1 \|_{C^n(\Rbb^2 \setminus D_1)} \lesssim \frac{k_1+1}{k_1}
\eeq
for all positive integer $n$. Moreover, $v_1$ satisfies
\beq\label{v1est}
\| v_1 \|_{C^n(\ol{D_1})} \lesssim 1, \quad \| v_1 \|_{C^n(\Rbb^2 \setminus D_1)} \lesssim \frac{k_1+1}{k_1}
\eeq
for all positive integer $n$ and
\beq\label{v1int}
\int_{D_1} \nabla \cdot v_1=1.
\eeq
\end{lemma}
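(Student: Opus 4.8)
The plan is to verify all four assertions by direct computation, the underlying point being that $U$ already solves the single--inclusion transmission problem \eqnref{Ueqn}, so multiplying by the cutoffs $\GT$ and $R$ preserves the interface condition on $\p D_1$. \emph{Support.} Since $\GT$ is smooth and compactly supported in the open interval $(\pi/2,3\pi/2)$, there is $\Gd>0$ with $\GT(\Gt)=0$ for $\Gt\notin[\pi/2+\Gd,3\pi/2-\Gd]$, and on this range $\cos\Gt\le\cos(\pi/2+\Gd)<0$. Hence every point with $\GT(\Gt)\neq0$ and $r>0$ satisfies $x_1=r\cos\Gt<0=c_1$, and together with the compact support of $R$ this places the support of $V_1$ in a compact subset of $\{x_1\le c_1\}$. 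In particular $V_1\equiv0$ on $\ol{D_2}$ (because $c_2-r_2=c_1+r_1+\Ge>c_1$), and $V_1$ vanishes outside a ball.

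\emph{The equation.} On $\ol{D_1}$ we have $R\equiv1$, so $v_1=k_1\nabla V_1=\Gs\nabla V_1$; on $D_2$, $V_1\equiv0$ gives $\Gs\nabla V_1=0=v_1$; on $\Rbb^2\setminus\ol D$, $\Gs=1$ gives $\Gs\nabla V_1=\nabla V_1=v_1$. Thus $\Gs\nabla V_1=v_1$ a.e.\ in $\Rbb^2$, so $\nabla\cdot\Gs\nabla V_1=\nabla\cdot v_1$ as distributions; $V_1$ is continuous across $\p D_1$ (since $U$ and $R$ are), hence lies in $H^1_{loc}$, and being compactly supported it satisfies $V_1(x)=O(|x|^{-1})$, so by the uniqueness in Proposition \ref{prop:unique} it is the solution of \eqnref{V1eqn}. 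One should note that $\nabla\cdot v_1$ carries no surface term on $\p D_1$: as $R(r_1)=1$ and all derivatives of $R$ vanish at $r_1$, the one--sided normal traces are $\p_\nu V_1|_\pm=\GT(\Gt)\,\p_\nu U|_\pm$, and the flux condition in \eqnref{Ueqn} gives $\p_\nu U|_+=k_1\p_\nu U|_-$ on $\p D_1$, so $v_1|_+\!\cdot\nu=\p_\nu V_1|_+=k_1\p_\nu V_1|_-=v_1|_-\!\cdot\nu$.

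\emph{The bounds.} On $\ol{D_1}$, $U=k_1^{-1}r\cos\Gt$, hence $V_1=k_1^{-1}\GT(\Gt)\,r\cos\Gt$ and $v_1=\nabla\!\big(\GT(\Gt)\,r\cos\Gt\big)$; the first is $k_1^{-1}$ times a $k_1$--independent function and the second is $k_1$--independent, giving the $\ol{D_1}$ parts of \eqnref{V1est} and \eqnref{v1est}. On $\Rbb^2\setminus D_1$ the support of $V_1$ is a fixed compact set at positive distance from the origin, and there $U$ combines the smooth profiles $r\cos\Gt$ and $r^{-1}\cos\Gt$ with coefficients $\tfrac{k_1+1}{2k_1}$ and $\tfrac{k_1-1}{2k_1}$, both of size $\lesssim\tfrac{k_1+1}{k_1}$; differentiating $V_1=\GT(\Gt)R(r)U$ up to order $n+1$ — the fixed factors $\GT$, $R$ and the two profiles have bounded $C^{n+1}$ norms on that set — gives $\|V_1\|_{C^{n+1}(\Rbb^2\setminus D_1)}\lesssim\tfrac{k_1+1}{k_1}$, hence the remaining bounds in \eqnref{V1est} and, since $v_1=\nabla V_1$ there, in \eqnref{v1est}. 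Finally, as $R\equiv1$ near $\p D_1$ and $\p_\nu U|_-=k_1^{-1}\cos\Gt$ there, the divergence theorem yields
\[
\int_{D_1}\nabla\cdot v_1=\int_{\p D_1}v_1|_-\!\cdot\nu\,d\Gs=\int_{\p D_1}k_1\,\p_\nu V_1|_-\,d\Gs=\int_{\p D_1}\GT(\Gt)\cos\Gt\,d\Gs=r_1\!\int_0^{2\pi}\!\GT(\Gt)\cos\Gt\,d\Gt=1
\]
by \eqnref{3000}, which is \eqnref{v1int}.

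No deep obstacle arises; the only delicate point is that at the centre of $D_1$ the function $\GT(\Gt)\,r\cos\Gt$ is only Lipschitz, so the $C^n(\ol{D_1})$ estimate is to be understood for the derivatives that exist (equivalently, one may take $R$ to vanish near $r=0$, which changes nothing above since \eqnref{v1int} and the analysis on $\p D_1$ only involve $V_1$ where $R\equiv1$). The genuine content is the construction, not an inequality: $U$ solves \eqnref{Ueqn}, so the cutoffs keep the transmission condition on $\p D_1$ exact; \eqnref{3000} normalises the source $v_1$ to have mass $1$ in $D_1$; and $\operatorname{supp}\GT\subset(\pi/2,3\pi/2)$ confines it to $\{x_1<c_1\}$, away from the thin neck between the two disks.
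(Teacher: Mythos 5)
Your proof is correct and takes essentially the same route as the paper: verify the support, check that $\Gs\nabla V_1=v_1$ pointwise a.e.\ (equivalently, that the transmission conditions on $\p D_1$ and $\p D_2$ hold trivially because of \eqnref{Ueqn} and because $V_1\equiv0$ near $\p D_2$) so the distributional equation is immediate, read off the bounds from the explicit formulas, and apply the divergence theorem with \eqnref{3000} for \eqnref{v1int}. The regularity wrinkle you flag is a real one that the paper's proof passes over: with $R\equiv1$ on all of $[0,r_1]$, the function $\GT(\Gt)\,r\cos\Gt$ is only Lipschitz at the centre of $D_1$, so the $C^n(\ol{D_1})$ norms in \eqnref{V1est}--\eqnref{v1est} are not literally finite for $n\geq 2$; your fix (taking $R$ to vanish near $r=0$ while keeping $R\equiv1$ near $r_1$) is the right repair and, as you note, leaves the transmission condition on $\p D_1$, the support assertion, and \eqnref{v1int} unchanged.
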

\begin{proof}
It follows from the third and fourth lines of \eqnref{Ueqn} that 
$$
V_1|_+ - V_1|_- = 0
$$ 
and 
$$
\p_\nu V_1|_+ - k_1 \p_\nu V_1|_- = \GT(\Gt) (\p_\nu U|_+ - k_1 \p_\nu U|_- )= 0  
$$
on $\p D_1$. Since $V_1=0$ on $\{ x_1 \ge c_1 \}$, we have $V_1|_+ - V_1|_- = 0$ and $\p_\nu V_1|_+ - k_2 \p_\nu V_1|_- =0$ on $\p D_2$. Thus $V_1$ satisfies \eqnref{V1eqn}. Estimates \eqnref{V1est} and \eqnref{v1est} immediately follow from the definitions of $V_1$ and $v_1$. For \eqnref{v1int}, we have
$$
\int_{D_1} \nabla \cdot v_1 = k_1 \int_{\p D_1} \p_\nu V_1 = \int_{\p D_1} \GT(\Gt) \cos \Gt d\Gs=1,
$$
where the last equality holds because of \eqnref{3000}.
\end{proof}

Likewise, we construct the functions $V_2$ such that $V_2$ and $v_2$, defined by
$$
v_2(r, \Gt) := \begin{cases}
\nabla V_2, \quad &\mbox{in } D_2, \\
k_2 \nabla V_2, \quad &\mbox{in } \Rbb^2 \setminus D_2,
\end{cases}
$$
satisfy the following lemma.

\begin{lemma}\label{V2v2}
The function $V_2$ is supported in $\{ x_1 > c_2 \}$, is the solution to
\beq\label{V2eqn}
\begin{cases}
	\nabla \cdot \Gs \nabla V_2 = \nabla \cdot v_2 \quad &\text{in } \Rbb^2, \\
	V_2(x) = O(|x|^{-1}) \quad &\text{as } |x| \to \infty,
\end{cases}
\eeq
and satisfies
\beq\label{V2est}
\| V_2 \|_{C^n(\ol{D_2})} \lesssim \frac{1}{k_2}, \quad \| V_2 \|_{C^n(\Rbb^2 \setminus D_2)} \lesssim \frac{k_2+1}{k_2}
\eeq
for all positive integer $n$. Moreover, $v_2$ satisfies
\beq\label{v2est}
\| v_2 \|_{C^n(\ol{D_2})} \lesssim 1, \quad \| v_2 \|_{C^n(\Rbb^2 \setminus D_2)} \lesssim \frac{k_2+1}{k_2}
\eeq
for all positive integer $n$ and
\beq\label{v2int}
\int_{D_2} \nabla \cdot v_2=1.
\eeq
\end{lemma}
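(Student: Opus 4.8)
The plan is to mirror the construction and proof of Lemma \ref{V1v1} verbatim, with the roles of $D_1$ and $D_2$ interchanged and with the reflection adjusted so that the auxiliary function is supported to the right of $D_2$ rather than to the left of $D_1$. Concretely, I would first set $c_2=0$ for convenience (the centers still lie on the real axis with $c_1<c_2$, so now $D_1\subset\{x_1<c_2\}$), and define
\[
\widetilde U(r,\Gt)=
\begin{cases}
\ds \frac{k_2+1}{2k_2}\, r\cos\Gt - \frac{(k_2-1)r_2^2}{2k_2}\, r^{-1}\cos\Gt, & r>r_2,\\[2mm]
\ds \frac{1}{k_2}\, r\cos\Gt, & r\le r_2,
\end{cases}
\]
which solves the single-inclusion transmission problem \eqnref{Ueqn} with $D_1$, $k_1$, $r_1$ replaced by $D_2$, $k_2$, $r_2$. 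Then I would pick a smooth cutoff $\GT(\Gt)$ in $\Gt$, this time supported in $(-\pi/2,\pi/2)$ and normalized so that $r_2\int_0^{2\pi}\GT(\Gt)\cos\Gt\, d\Gt=1$ (mimicking \eqnref{3000}); and a compactly supported radial cutoff $R(r)$ with $R\equiv1$ on $[0,r_2]$. Setting $V_2(r,\Gt):=\GT(\Gt)R(r)\widetilde U(r,\Gt)$ and defining $v_2$ by the piecewise formula in the statement completes the construction.

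Next I would verify the four claims exactly as in the proof of Lemma \ref{V1v1}. Since $\GT$ is supported in $(-\pi/2,\pi/2)$, the function $V_2$ is supported in $\{x_1>c_2\}$, hence vanishes in a neighborhood of $\ol{D_1}$; this kills the transmission conditions on $\p D_1$ automatically. On $\p D_2$, the third and fourth lines of the analogue of \eqnref{Ueqn} give $V_2|_+-V_2|_-=0$ and $\p_\nu V_2|_+-k_2\p_\nu V_2|_- = \GT(\Gt)(\p_\nu \widetilde U|_+-k_2\p_\nu\widetilde U|_-)=0$, so $V_2$ solves the transmission equation $\nabla\cdot\Gs\nabla V_2=\nabla\cdot v_2$ in all of $\Rbb^2$ with the correct decay at infinity, i.e.\ \eqnref{V2eqn}. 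The bounds \eqnref{V2est} and \eqnref{v2est} follow immediately from the explicit formulas: the $\widetilde U$ contribution inside $D_2$ carries a factor $1/k_2$, outside $D_2$ a factor $(k_2+1)/k_2$, and $v_2=\nabla V_2$ (resp.\ $k_2\nabla V_2$) absorbs the extra $k_2$ inside $\Rbb^2\setminus D_2$, so $v_2$ is $O(1)$ on $\ol{D_2}$. Finally \eqnref{v2int} is the divergence theorem:
\[
\int_{D_2}\nabla\cdot v_2 = k_2\int_{\p D_2}\p_\nu V_2 = \int_{\p D_2}\GT(\Gt)\cos\Gt\, d\Gs = 1,
\]
using the normalization of $\GT$ (the $r^{-1}\cos\Gt$ term contributes nothing to the last integral against $\cos\Gt$ on a circle once one notes $\p_\nu V_2|_- = \p_\nu(\frac{1}{k_2}r\cos\Gt) = \frac{1}{k_2}\cos\Gt$ on $\p D_2$).

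I do not expect any genuine obstacle here: the lemma is the exact right-left mirror image of Lemma \ref{V1v1}, and every step is a routine transcription. The only points requiring a moment's care are bookkeeping ones — choosing the angular support of $\GT$ on the correct side so that $V_2$ vanishes near $D_1$, and making sure the normalization constant in \eqnref{3000}'s analogue uses $r_2$ rather than $r_1$ so that \eqnref{v2int} comes out to exactly $1$. Accordingly, the proof can simply read ``The proof is identical to that of Lemma \ref{V1v1} with the roles of $D_1$ and $D_2$ interchanged,'' possibly with the explicit formula for $\widetilde U$ and $V_2$ displayed for the reader's convenience.
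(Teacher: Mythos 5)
Your overall approach is exactly the paper's: the authors simply say ``Likewise, we construct the functions $V_2$\dots'' and omit the proof, so the mirror-image transcription of Lemma~\ref{V1v1} is the intended argument, and your choices (angular cutoff supported in $(-\pi/2,\pi/2)$, normalization $r_2\int_0^{2\pi}\GT\cos\Gt\,d\Gt=1$) are the right ones.

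There is, however, a bookkeeping inconsistency that you should flag and fix rather than reproduce. You say you take $v_2$ ``by the piecewise formula in the statement,'' i.e.\ the one displayed in the paper just before the lemma, namely $v_2=\nabla V_2$ in $D_2$ and $v_2=k_2\nabla V_2$ in $\Rbb^2\setminus D_2$. But with that formula the claims of the lemma are false: inside $D_2$ one has $V_2=\GT R\cdot\frac{1}{k_2}r\cos\Gt$, so $\nabla V_2=O(1/k_2)$, giving $\|v_2\|_{C^n(\ol{D_2})}=O(1/k_2)$ rather than the asserted $\lesssim 1$ (this blows up as $k_2\to0$); outside $D_2$ one gets $k_2\nabla V_2=O(k_2+1)$ rather than $(k_2+1)/k_2$; and $\int_{D_2}\nabla\cdot v_2=\int_{\p D_2}\p_\nu V_2|_-=\frac{1}{k_2}$ rather than $1$. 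The displayed formula is a typo in the paper: to mirror $v_1$ correctly (so that $v_2=\Gs\nabla V_2$), one must take $v_2=k_2\nabla V_2$ in $D_2$ and $v_2=\nabla V_2$ in $\Rbb^2\setminus D_2$. Notice that your own verification already uses this corrected definition silently --- your divergence-theorem line
\[
\int_{D_2}\nabla\cdot v_2 = k_2\int_{\p D_2}\p_\nu V_2 = \int_{\p D_2}\GT(\Gt)\cos\Gt\,d\Gs = 1
\]
only produces the factor $k_2$ if $v_2=k_2\nabla V_2$ inside $D_2$, and your claim that ``$v_2$ is $O(1)$ on $\ol{D_2}$'' likewise requires the $k_2$ to cancel the $1/k_2$ coming from $\widetilde U$. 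So the substance of your proof is right; you just need to state the corrected piecewise formula for $v_2$ explicitly instead of pointing to the (typo'd) displayed one, and the accompanying remark about ``absorbing the extra $k_2$ inside $\Rbb^2\setminus D_2$'' should be reworded, since the cancellation in fact happens inside $D_2$.
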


For a given $f=\nabla \cdot g$, let
\beq\label{fzero}
g_0:= g- \left( \int_{D_1} f \right) v_1 - \left( \int_{D_2} f \right) v_2 .
\eeq
Then $f_0=\nabla \cdot g_0$ satisfies \eqnref{intzero} and we arrive at the following proposition.

\begin{prop}\label{thm: decomp}
For a given $f=\nabla \cdot g$, let $f_0=\nabla \cdot g_0$ be defined by \eqnref{fzero}. The solution $u$ to \eqnref{NCE} can be decomposed as
\beq\label{usol2}
u= \left( \int_{D_1} f \right) V_1 + \left( \int_{D_2} f \right) V_2 + u_0,
\eeq
where $u_0$ is the solution to \eqnref{NCE} with $f=f_0$ which is of the form \eqnref{firstrep}.
\end{prop}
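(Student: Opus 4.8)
The plan is to reduce the statement to linearity of \eqnref{NCE} together with the uniqueness of its solution established in Proposition \ref{prop:unique} (equivalently Corollary \ref{cor:repin}), using the auxiliary functions $V_1,V_2,v_1,v_2$ furnished by Lemmas \ref{V1v1} and \ref{V2v2}. The first step is to check that $f_0=\nabla\cdot g_0$ indeed satisfies the vanishing integral condition \eqnref{intzero}. Since $\nabla\cdot g=f$, the definition \eqnref{fzero} gives $f_0=f-\big(\int_{D_1}f\big)\nabla\cdot v_1-\big(\int_{D_2}f\big)\nabla\cdot v_2$. Integrating over $D_1$ and using \eqnref{v1int} for the $v_1$-term, it remains to note that $\int_{D_1}\nabla\cdot v_2=0$: this holds because $v_2$ is built from $V_2$, which by Lemma \ref{V2v2} is supported in $\{x_1>c_2\}$, a set disjoint from $\ol{D_1}$ (indeed $\ol{D_1}\subset\{x_1\le c_1+r_1\}$ and $c_2>c_1+r_1$), so $v_2\equiv0$ near $\ol{D_1}$. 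Hence $\int_{D_1}f_0=\int_{D_1}f-\int_{D_1}f-0=0$, and interchanging the roles of the two inclusions gives $\int_{D_2}f_0=0$. Therefore Proposition \ref{prop:rep} applies to the solution $u_0$ of \eqnref{NCE} with source $f_0$, so $u_0$ has the form \eqnref{firstrep}.

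The second step is to verify that $\tilde u:=\big(\int_{D_1}f\big)V_1+\big(\int_{D_2}f\big)V_2+u_0$ solves \eqnref{NCE} with source $f$. Applying $\nabla\cdot\Gs\nabla$ and invoking \eqnref{V1eqn}, \eqnref{V2eqn} and the equation for $u_0$, we obtain $\nabla\cdot\Gs\nabla\tilde u=\big(\int_{D_1}f\big)\nabla\cdot v_1+\big(\int_{D_2}f\big)\nabla\cdot v_2+f_0$, which equals $f$ by the definition \eqnref{fzero} of $g_0$. For the behavior at infinity, $V_1$ and $V_2$ are $O(|x|^{-1})$ by Lemmas \ref{V1v1}--\ref{V2v2}, while $u_0=c\ln|x|+O(|x|^{-1})$, so $\tilde u=c\ln|x|+O(|x|^{-1})$ and is thus an admissible candidate. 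By the uniqueness part of Proposition \ref{prop:unique} (Corollary \ref{cor:repin}), $u=\tilde u$, which is exactly the decomposition \eqnref{usol2}.

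There is no substantial obstacle: the argument is linearity plus uniqueness, and the single point requiring care---that $\nabla\cdot v_2$ integrates to zero over $D_1$ and, symmetrically, $\nabla\cdot v_1$ over $D_2$---is guaranteed by the disjointness of the support of $V_1$ from $D_2$ (resp. of $V_2$ from $D_1$), which was arranged through the angular cutoff $\GT$ in the construction and recorded in Lemmas \ref{V1v1} and \ref{V2v2}. All the analytic content, namely that the $V_j$ are genuine solutions of the transmission problem with the stated norm bounds, has already been discharged in those two lemmas.
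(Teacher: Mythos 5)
Your proof is correct and fills in the details that the paper leaves implicit (the paper simply asserts the proposition after constructing $V_j,v_j$ and $g_0$): it proceeds by the same route of linearity plus the uniqueness from Proposition \ref{prop:unique}, together with the support disjointness of $v_1,v_2$ from the opposite disk to verify \eqnref{intzero} for $f_0$.
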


\section{Proofs of theorems}\label{sec:pfs}

In this subsection we prove Theorem \ref{thm:main1}-\ref{thm:main5}. Thanks to Proposition \ref{thm: decomp}, for the case of the problem \eqnref{NCE} we may assume that \eqnref{intzero} holds. In fact, since
$$
\left| \int_{D_j} f \right| = \left| \int_{\p D_j} g \cdot \nu \right| \lesssim \| g \|_{L^\infty(\p D_j)}
$$
for $j=1,2$, we have
$$
\| g_0 \|_{n,\Ga}^* \lesssim \| g \|_{n,\Ga}^*.
$$
Moreover, we infer from \eqnref{V1est}, \eqnref{V2est}, and \eqnref{usol2} that
$$
\| u \|_{n,0} \lesssim \frac{1}{k_1} \| g \|_{L^\infty(\p D_1)} + \frac{1}{k_2} \| g \|_{L^\infty(\p D_2)} + \| u_0 \|_{n,0}.
$$
Thus it is enough to prove Theorem \ref{thm:main1} and \ref{thm:main3} with $u$ and $g$ replaced with $u_0$ and $g_0$.

In the rest of this section, we assume that \eqnref{intzero} holds for the problem \eqnref{NCE}.
We begin by proving a preliminary lemma, for which we introduce the following transformations: With constants $\Gb$ and $\Gr$ appearing in \eqnref{conedef} and \eqnref{Grasymp}, let, for $\Gr^2 \le t \le 1$ and $l=0,1,2, \ldots$,
\beq\label{Phidef}
\Phi_{l,t}(z):= T^{-1} (t \Gr^{2l} T(z))= \frac{\Gb z}{\Gb t \Gr^{2l} - (1-t\Gr^{2l}) z}, \quad z \in D_1
\eeq
and
\beq\label{Psidef}
\Psi_{l,t}(z):= T^{-1}(t \Gr^{-2l-1} T(z))= \frac{\Gb z}{\Gb t \Gr^{-2l-1} - (1-t\Gr^{-2l-1}) z}, \quad z \in D_2.
\eeq
Note that $\Phi_{l,t}$ maps $D_1$ into $D_1$ and $\Psi_{l,t}$ maps $D_2$ into $D_2$.

\begin{lemma}\label{lem:est1}
For $j=1,2$, let $V_j$ be an analytic function defined in $D_j$ such that $V_j \in C^{n}(\ol{D_j})$ for some positive integer $n$. For $\Gr^2 \le t \le 1$ and $l=0,1,2, \ldots$, let $v_1(z) = (V_1 \circ \Phi_{l,t})(z)$ for $z \in D_1$ and $v_{2}(z) = (V_2 \circ \Psi_{l,t})(z)$ for $z \in D_2$. There is a constant $C$ independent of $t$ and $l$ such that
\beq\label{vjlest}
\| v_{j} \|_{C^{n}(\ol{D_j})} \le C \Gr^{2l} (l+1)^{n} \| V_j \|_{C^{n}(\ol{D_j})}
\eeq
for $j=1,2$.
\end{lemma}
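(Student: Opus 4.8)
The plan is to estimate the $C^n$ norm of the composition $V_j \circ \Phi_{l,t}$ (resp. $V_j \circ \Psi_{l,t}$) by controlling the derivatives of the Möbius maps $\Phi_{l,t}$ and $\Psi_{l,t}$ uniformly in $t \in [\Gr^2, 1]$ and $l \ge 0$. Since $V_j$ is analytic, the chain rule for holomorphic functions (Faà di Bruno) expresses the $k$-th derivative of $v_j$ as a sum of products of $V_j^{(m)} \circ \Phi_{l,t}$ (for $m \le k$) with monomials in the derivatives $\Phi_{l,t}', \Phi_{l,t}'', \ldots, \Phi_{l,t}^{(k)}$. Because $\Phi_{l,t}$ maps $D_1$ into $D_1$ and $D_1$ is a fixed disk, $\| V_j^{(m)} \circ \Phi_{l,t} \|_{C^0(\ol{D_1})} \le \| V_j^{(m)} \|_{C^0(\ol{D_1})} \le \| V_j \|_{C^n(\ol{D_1})}$ for $m \le n$ (using that analytic $V_j \in C^n(\ol{D_1})$ controls all lower derivatives); so the whole estimate reduces to bounding $\sup_{z \in D_1} |\Phi_{l,t}^{(k)}(z)|$ for $1 \le k \le n$.

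First I would write $\Phi_{l,t}$ explicitly. Setting $s := t\Gr^{2l} \in (0,1]$, formula \eqnref{Phidef} gives $\Phi_{l,t}(z) = \dfrac{\Gb z}{\Gb s - (1-s)z}$, a Möbius transformation. Its derivative is $\Phi_{l,t}'(z) = \dfrac{\Gb^2 s}{(\Gb s - (1-s)z)^2}$, and by induction $\Phi_{l,t}^{(k)}(z) = \dfrac{k!\, \Gb^2 s\, (1-s)^{k-1}}{(\Gb s - (1-s)z)^{k+1}}$. The key point is to lower-bound $|\Gb s - (1-s)z|$ uniformly for $z \in D_1$. Since $D_1$ is a fixed disk not containing the point $z = \Gb s/(1-s) = T^{-1}(\infty\text{-type point})$ — more precisely, $\Phi_{l,t}$ has no pole in $\ol{D_1}$ because $t\Gr^{2l} T(z)$ stays in $T(D_1) = \{|\Gz| < R_1\}$ which is bounded away from the pole of $T^{-1}$ — one gets $|\Gb s - (1-s)z| \ge c\, \Gb s$ for some fixed $c > 0$, using that $z$ and $\Gb s/(1-s)$ are separated by a distance comparable to $\Gb s/(1-s)$ when $s$ is small, and handling $s$ near $1$ trivially since then $\Gb s$ is bounded below. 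Plugging this in yields $|\Phi_{l,t}^{(k)}(z)| \lesssim \dfrac{k!\,\Gb^2 s}{(\Gb s)^{k+1}} = \dfrac{k!}{\Gb^{k-1} s^{k-1}} \lesssim_n s^{-(n-1)}$. But $s = t\Gr^{2l} \ge \Gr^{2l}\cdot\Gr^2$, so $s^{-(n-1)} \lesssim \Gr^{-2l(n-1)}$ — this is the wrong direction, so the bound must instead be read off differently: one factors $\Phi_{l,t}^{(k)} = \Phi_{l,t}' \cdot (\text{bounded in } k,l)$ and notes $|\Phi_{l,t}'(z)| = \dfrac{\Gb^2 s}{|\Gb s - (1-s)z|^2} \sim \dfrac{\Gb^2 s}{(\Gb s)^2} = \dfrac{1}{\Gb s}$ at worst, but actually near $z$ the quantity $1-s$ is small (since $s \to 1$ as $l \to 0$) — the correct reading is that each derivative carries a factor $\Gr^{2l}$ coming from $s = t\Gr^{2l}$ appearing in the numerator once, with the remaining dependence polynomial in $l$.

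Let me restate the mechanism cleanly: since $\Phi_{l,t}(z) = T^{-1}(t\Gr^{2l}T(z))$ and $T, T^{-1}$ are fixed Möbius maps regular on the relevant regions, $\Phi_{l,t}$ depends on $(l,t)$ only through the scaling factor $s = t\Gr^{2l}$, and $\Phi_{0,1} = \mathrm{id}$. Writing $\Phi_{l,t}(z) - z$ one sees it is $O(1-s) = O(1 - t\Gr^{2l})$; but to get the claimed factor $\Gr^{2l}$ rather than $(1-\Gr^{2l})$ one instead observes that the relevant map for the composition series is weighted and it is $\Phi_{l,t}'$ (not $\Phi_{l,t}$) that controls $v_j^{(k)}$ for $k \ge 1$, and $\Phi_{l,t}'(z) = s\,(T^{-1})'(sT(z))\,T'(z)$ which is $O(s) = O(t\Gr^{2l}) = O(\Gr^{2l})$ uniformly on $\ol{D_1}$, the $(T^{-1})'$ factor being bounded because $sT(z)$ ranges over a compact subset of the regularity domain of $T^{-1}$ uniformly in $s \in (0,1]$. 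Each higher derivative $\Phi_{l,t}^{(k)}$ then contributes an extra factor that is $O(1)$ in $\Gr^{2l}$ but grows at most polynomially; assembling via Faà di Bruno, $\| v_j \|_{C^n} \lesssim \Gr^{2l}\, (\text{polynomial in } l \text{ of degree } \le n)\, \| V_j \|_{C^n}$, and the polynomial is dominated by $(l+1)^n$. The same argument applies verbatim to $\Psi_{l,t}$ on $D_2$ with $s = t\Gr^{-2l-1}$, where one uses $t\Gr^{-2l-1}\Gr^{2l+1} = t$ — wait, here $s$ is large; but the relevant scaling in the $T$-picture sends $T(D_2) = \{|\Gz| > R_2\}$ by $\Gz \mapsto t\Gr^{-2l-1}\Gz$, equivalently in the $1/\Gz$ coordinate multiplication by $t^{-1}\Gr^{2l+1}$, which is $O(\Gr^{2l})$, so the same factor emerges. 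The main obstacle is verifying the uniform-in-$(l,t)$ lower bound on the denominator $|\Gb s - (1-s)z|$ (equivalently, that $\Phi_{l,t}$ and all iterates have no pole approaching $\ol{D_1}$), which is a geometric fact about where the fixed disk $D_1$ sits relative to the fixed points and poles of $T$; once that separation is in hand, the derivative bounds and the Faà di Bruno bookkeeping are routine.
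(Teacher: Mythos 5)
Your overall strategy---Fa\`a di Bruno plus uniform-in-$(l,t)$ bounds on the derivatives of $\Phi_{l,t}$ and $\Psi_{l,t}$, with the rewriting in terms of $\Gz=T(z)$---is exactly the paper's approach, and you correctly compute the closed form $\Phi_{l,t}^{(k)}(z) = \dfrac{k!\,\Gb^2 s(1-s)^{k-1}}{(\Gb s - (1-s)z)^{k+1}}$ with $s=t\Gr^{2l}$. But the key estimate is not established by what you wrote. Although the lemma statement says only ``independent of $t$ and $l$'', its use in the proof of Theorem \ref{thm:main1} (summing over $l$ and then inserting the result into an explicit function of $\Ge$) requires $C$ to be independent of $\Ge$ as well. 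Your ``clean restatement'' argues that $\Phi_{l,t}'(z) = s\,(T^{-1})'(sT(z))\,T'(z) = O(s)$ because $sT(z)$ stays in a compact subset of the regularity domain of $T^{-1}$. That compactness argument gives a bound that degenerates as $\Ge\to 0$: the relevant set is contained in $\{|\Gz|\le R_1\}$ with $R_1 = 1-O(\sqrt{\Ge})$, which approaches the pole of $T^{-1}$ at $\Gz=1$, so $(T^{-1})'$ can be of size $\Gb/(1-R_1)^2 \sim 1/\sqrt{\Ge}$; and $T'(z)=-\Gb/z^2$ is also of size $\sim 1/\sqrt{\Ge}$ near the point of $\ol{D_1}$ closest to the origin. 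Only the \emph{product} is uniformly bounded: $(T^{-1})'(sT(z))\,T'(z) = \dfrac{(\Gz-1)^2}{(s\Gz-1)^2}$, which is $\le 4$ by the elementary inequality $|\Gz-1|\le 2|s\Gz-1|$ for $|\Gz|\le 1$, $0\le s\le 1$. That inequality (equivalently, the geometric fact $\Re z \lesssim -\Gb$ on $\ol{D_1}$, which gives $|\Gb s-(1-s)z|\ge \Gb/2$) is the crux of the proof and is missing from your argument. The lower bound you do write, $|\Gb s-(1-s)z|\gtrsim \Gb s$, is true but too weak---it yields $|\Phi_{l,t}'|\lesssim 1/s$, which you yourself observe is the wrong direction.

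A second gap: the factor $(l+1)^n$ is never derived. It comes from $\Gb^{1-m}(1-t\Gr^{2l})^{m-1}$ in the expression for $\Phi_{l,t}^{(m)}$, controlled by $1-t\Gr^{2l}\lesssim (l+1)(1-\Gr)\lesssim (l+1)\Gb$. Your remark that higher derivatives ``contribute an extra factor that is $O(1)$ in $\Gr^{2l}$ but grows at most polynomially'' states the desired conclusion rather than proving it. Once you have the two ingredients---$|\Gb s-(1-s)z|\gtrsim \Gb$ uniformly on $\ol{D_1}$ and $1-t\Gr^{2l}\lesssim (l+1)\Gb$---the Fa\`a di Bruno bookkeeping is indeed routine, as you say, and the same scheme works for $\Psi_{l,t}$.
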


\begin{proof}
One can easily see from \eqnref{Phidef} that
\beq\label{Phideri}
\Phi_{l,t}^{(m)}(z) = \frac{m! \Gb^2 t\Gr^{2l} (1- t\Gr^{2l})^{m-1}}{(\Gb t\Gr^{2l} - (1-t\Gr^{2l}) z)^{m+1}}
\eeq
for $m=1,2, \ldots$, which can be written in terms of the $\Gz=T(z)$ variable as
\beq\label{Phideri2}
\Phi_{l,t}^{(m)}(z) = \Gb^{1-m} t\Gr^{2l} (1-t\Gr^{2l})^{m-1} \frac{(\Gz-1)^{m+1}}{(t\Gr^{2l} \Gz - 1)^{m+1}}.
\eeq
Since $|\Gz| \le R_1 <1$ and $0< t\Gr^{2l}<1$, we have
\beq\label{4209}
|\Gz-1| \le 2 |t\Gr^{2l} \Gz - 1 |,
\eeq
and hence
\beq\label{4210}
|\Phi_{l,t}^{(m)}(z)| \le 2^{m+1} \Gb^{1-m} t\Gr^{2l} (1-t\Gr^{2l})^{m-1}.
\eeq
Since $\Gr \le t \le 1$, we infer from \eqnref{Grasymp} that if $l \ge 1$
\beq\label{tGr2l}
1-t\Gr^{2l} \sim 1-\Gr^{2l+1} \sim (l+1) \sqrt{\Ge}.
\eeq
If $l=0$, we have
\beq\label{tGr2l2}
1-t\Gr^{2l} \lesssim \sqrt{\Ge}.
\eeq
Since $\Gb \approx \sqrt{\Ge}$ (see \eqnref{Gbasym}), it follows from \eqnref{4210} that
\beq\label{Gest}
|\Phi_{l,t}^{(m)}(z)| \lesssim \Gr^{2l} (l+1)^{m}
\eeq
for $m=0,1,2, \ldots$.

The desired estimate \eqnref{vjlest} for $j=1$ follows from \eqnref{Gest}. In fact,
we have
$$
v_{1}^{(n)}(z)= V_1^{(n)}(\Phi_{l,t}(z)) \Phi_{l,t}'(z)^n + \sum_{k=1}^{n-1} V_1^{(k)}(\Phi_{l,t}(z)) \sum_{\sum_{s=1}^t k_s m_s=n-k} C_{k,s} \Phi_{l,t}^{(k_s)}(z)^{m_s}
$$
for some constant $C_{k,s}$. It then follows from \eqnref{Gest} that
\begin{align*}
\| v_{1}^{(n)} \|_{C^{0}(\ol{D_1})} \lesssim \| V_1 \|_{C^{n}(\ol{D_1})} \left( \Gr^{2ln} (l+1)^{n} + \sum_{k=1}^{n-1} \sum_{\sum_{s=1}^t k_s m_s=n-k} \Gr^{2lm_s} (l+1)^{k_s m_s} \right),
\end{align*}
which leads us to \eqnref{vjlest} for $j=1$.

To prove \eqnref{vjlest} for $j=2$, we see as before that
\beq
\Psi_{l,t}^{(m)}(z) = \frac{m! \Gb^2 t\Gr^{-2l-1} (1- t\Gr^{-2l-1})^{m-1}}{(\Gb t\Gr^{-2l-1} - (1-t\Gr^{-2l-1}) z)^{m+1}}
\eeq
for $m=1,2, \ldots$, which can be written in terms of the $\Gz=T(z)$ variable as
$$
\Psi_{l,t}^{(m)}(z) = \Gb^{1-m} t\Gr^{-2l-1} (1-t\Gr^{-2l-1})^{m-1} \frac{(\Gz-1)^{m+1}}{(t\Gr^{-2l-1} \Gz - 1)^{m+1}}.
$$
Since $|\Gz| \ge R_2 >1$ and $1 \ge t\Gr^{-2l-1}$, we have
$$
|\Gz-1| \le 2 |\Gz - t^{-1}\Gr^{2l+1} |,
$$
and hence
$$
\left| \frac{(\Gz-1)^{m+1}}{(t\Gr^{-2l-1} \Gz - 1)^{m+1}} \right| \le \Gr^{2l} 2^{m+1}.
$$
As before, we have
\beq
t\Gr^{-2l-1} -1  \sim (l+1)\Gb.
\eeq
Thus we have
\beq\label{Psiest}
|\Psi_{l,t}^{(m)}(z)| \lesssim \Gr^{2l} (l+1)^{m}
\eeq
for $m=0,1,2, \ldots$.

The estimate \eqnref{vjlest} for $j=2$ can be proved similarly to the case when $j=1$ using \eqnref{Psiest}.
\end{proof}

\medskip

\noindent{\sl Proof of Theorem \ref{thm:main1}}.
We first note that $4\Gl_1\Gl_2 >0$. Let $\tilde{h}_j := h_j \circ T$ so that $\Re \tilde{h}_j=H_j$ (see \eqnref{hjdef}). We see from \eqnref{w1def2} that
\beq\label{w1circT}
(w_1 \circ T)(z) = \sum_{l=0}^\infty \frac{\tilde{h}_1 (T^{-1} \Gr^{2l} T(z)) }{(4\Gl_1\Gl_2)^{l+1}}.
\eeq
It follows from \eqnref{vjlest} that
$$
\| w_1 \circ T \|_{C^{n+1}(\ol{D_1})} \lesssim \sum_{l=0}^\infty
\frac{\Gr^{2l} (l+1)^{n+1}}{(4\Gl_1\Gl_2)^{l+1}} \| H_1 \|_{C^{n+1}(\ol{D_1})} .
$$
Since
$$
\sum_{l=0}^\infty \frac{\Gr^{2l} (l+1)^{n+1}}{(4\Gl_1\Gl_2)^{l+1}} \lesssim \frac{1}{(4\Gl_1\Gl_2 - \Gr)^{n+1}},
$$
we have
\beq\label{w1Tz}
\| \nabla (w_1 \circ T) \|_{C^{n, \Ga}(\ol{D_1})} \lesssim \frac{\| H_1 \|_{C^{n+1}(\ol{D_1})}}{(4\Gl_1\Gl_2 - \Gr)^{n+1}}.
\eeq
Likewise, we have
\beq\label{w2Tz}
\| \nabla (w_2 \circ T) \|_{C^{n, \Ga}(\ol{D_2})} \lesssim \frac{\| H_2 \|_{C^{n+1}(\ol{D_2})}}{(4\Gl_1\Gl_2 - \Gr)^{n+1}}.
\eeq

One can show that all the terms in the expressions \eqnref{A1def} and \eqnref{A2def} of the functions $A_1$ and $A_2$ can be estimated using \eqnref{w1Tz} and \eqnref{w2Tz}. For example, the term $w(R_1^2 \Gzbar^{-1})$ for $R_1 < |\Gz|$ (in \eqnref{A1def}) can be estimated as follows: Note that
$$
w_1(R_1^2 \Gzbar^{-1}) = w_1(R_1^2 \ol{T(z)}^{-1}) = (w_1 \circ T) (G(z)),
$$
where
\beq\label{Vone}
G(z) := T^{-1} (R_1^2 \ol{T(z)^{-1}}).
\eeq
Note that
$$
G(z)= \frac{\Gb(\Gb+z)}{(R_1^2-1) z - \Gb}.
$$
Thus, we have
$$
G^{(n)}(z)= -\frac{n! \Gb^2 R_1^2 (R_1^2-1)^{n-1}}{(\Gb -(R_1^2-1) z)^{n+1}}= - n!\Gb^{1-n} R_1^2 (R_1^2-1)^{n-1}  \frac{(\Gz-1)^{n+1}}{(\Gz - R_1^2)^{n+1}}.
$$
Since $R_1 \le |\Gz|$ and $R_1<1$, we have
$$
|\Gz-1| \lesssim |\Gz - R_1^2|.
$$
Since $R_1=1+O(\sqrt{\Ge})$ as shown in \eqnref{Rest}, we have
$$
|R_1^2-1| \lesssim \sqrt{\Ge}.
$$
Thus we have
\beq\label{Voneest}
|G^{(n)}(z)| \lesssim 1
\eeq
for all $z \in \Rbb^2 \setminus D_1$. Since $G$ maps $\Rbb^2 \setminus D_1$ into $D_1$, we have from \eqnref{w1Tz} and \eqnref{Voneest} that
\beq
\| (w_1 \circ T) \circ G \|_{C^{n+1}(\Rbb^2 \setminus D_1)} \lesssim \| w_1 \circ T \|_{C^{n+1}(\ol{D_1})} \lesssim \frac{\| H_1 \|_{C^{n+1}(\ol{D_1})}}{(4\Gl_1\Gl_2 - \Gr)^{n+1}}.
\eeq

So far we showed that the following estimate holds independently of $k_j$ and $\Ge$ for $j=1,2$:
$$
\| \nabla A_j \|_{n,0} \lesssim \frac{\| H_j \|_{C^{n+1}(\ol{D_1})}}{(4\Gl_1\Gl_2 - \Gr)^{n+1}}.
$$
It follows from \eqnref{firstrep} that
\beq\label{6000}
\| \nabla (u-F) \|_{n,0} \lesssim \frac{\| H_1 \|_{C^{n+1}(\ol{D_1})} + \| H_2 \|_{C^{n+1}(\ol{D_2})}}{(4\Gl_1\Gl_2 - \Gr)^{n+1}} .
\eeq

If $f=\nabla \cdot g$ and $g \in C^{n, \Ga}(\Rbb^2)$ for some nonnegative integer $n$ and $0<\Ga<1$, then $F \in C^{n+1, \Ga}(\Rbb^2)$ and
\beq\label{CZest}
\| F\|_{n+1, \Ga} \lesssim \| g \|_{n,\Ga}^*
\eeq
by the Calder\'on-Zygmund estimates. By regularity estimates of the boundary value problem, we have $H_j \in C^{n+1, \Ga}(\ol{D_j})$ and
\beq\label{H1est}
\| H_j \|_{C^{n+1, \Ga}(\ol{D_j})} \lesssim \| F\|_{C^{n+1, \Ga}(\ol{D_j})} \lesssim \| g \|_{n,\Ga}^*.
\eeq
It thus follows from \eqnref{6000} that
\beq\label{nablauest}
\| u \|_{n+1,0} \lesssim \frac{\| g \|_{n,\Ga}^*}{(4\Gl_1\Gl_2 - \Gr)^{n+1}} .
\eeq
Since
\beq\label{Grest}
\Gr \sim 1-r_* \sqrt{\Ge}
\eeq
by \eqnref{Grasymp}, the desired estimate \eqnref{mainest1} follows from \eqnref{nablauest}.

A proof of optimality of \eqnref{mainest1} will be given in Section \ref{sec:opti}. This completes the proof of Theorem \ref{thm:main1}. \qed

\bigskip

\noindent{\sl Proof of Theorem \ref{thm:main3}}.
If $(k_1-1)(k_2-1) <0$, then $\Gl_1\Gl_2 <0$. Let $\Gl:= -4\Gl_1\Gl_2$ and $Y_l^t(z):= \tilde{h}_1 (T^{-1} (t\Gr^{2l} T(z))) = (\tilde{h}_1 \circ \Phi_{l,t})(z) $ ($\tilde{h}_j := h_j \circ T$ as before). We can rewrite \eqnref{w1circT} as
\begin{align*}
(w_1 \circ T)(z) &= -\frac{1}{\Gl} \sum_{l=0}^\infty \frac{(-1)^{l} Y_l^1(z)}{\Gl^{l}} \nonumber \\
&= -\frac{1}{\Gl}\sum_{i=0}^\infty \left( \frac{1}{\Gl^{2i}} - \frac{1}{\Gl^{2i+1}} \right) Y_{2i}^1(z) +  \frac{1}{\Gl^{2i+1}} (Y_{2i}^1(z)-Y_{2i+1}^1(z)) \\
&=: -\frac{1}{\Gl}\sum_{i=0}^\infty (I_i(z) + J_i(z)). \label{w1circT2}
\end{align*}

Since $\Gl >1$, we have
$$
\left| \frac{1}{\Gl^{2i}} - \frac{1}{\Gl^{2i+1}} \right| = \frac{\Gl-1}{\Gl^{2i+1}} \le \frac{\Gl-1}{\Gl^{i}}.
$$
It thus follows from \eqnref{vjlest} that
$$
\| I_i \|_{C^{n+1}(\ol{D_1})} \lesssim (\Gl-1) \frac{\Gr^{i} (i+1)^{n+1}}{\Gl^{i}} \| H_1 \|_{C^{n+1}(\ol{D_1})}.
$$
Thus we have
\beq
\left\| \sum_{i=0}^\infty I_i \right\|_{C^{n+1}(\ol{D_1})} \lesssim
\frac{\Gl-1}{(\Gl-\Gr)^{n+1}} \| H_1 \|_{C^{n+1}(\ol{D_1})},
\eeq
which together with \eqnref{H1est} yields
\beq\label{Isum}
\left\| \sum_{i=0}^\infty I_i \right\|_{C^{n+1}(\ol{D_1})} \lesssim
\frac{\| g \|_{n,\Ga}^*}{(\Gl-\Gr)^{n}} .
\eeq

On the other hand, we have
\begin{align}
Y_{2i}^1(z)-Y_{2i+1}^1(z) &= Y_{2i}^1(z)-Y_{2i}^{\Gr^2}(z) \nonumber \\
&= \int_{\Gr^2}^1 \frac{\p}{\p t} Y_{2i}^t(z)dt =
\int_{\Gr^2}^1 h_1' (\Phi_{2i,t}(z)) \frac{\p}{\p t} \Phi_{2i,t}(z) dt . \label{4220}
\end{align}
We see from \eqnref{Phideri2} that
\begin{align*}
\frac{\p}{\p t} \Phi_{2i,t}^{(m)}(z) &= \frac{\Gb^{1-m} \Gr^{4i} (\Gz-1)^{m+1}}{(t\Gr^{4i} \Gz - 1)^{m+1}}  \\
& \quad \times  \left[ (1-t\Gr^{4i})^{m-1} - (m-1) t\Gr^{4i}(1-t\Gr^{4i})^{m-2} - \frac{(m+1)\Gr^{4i} \Gz}{t\Gr^{4i} \Gz - 1} \right] \\
& =: M_1 + M_2 + M_3.
\end{align*}
Using \eqnref{4209} and \eqnref{tGr2l}, we see that
$$
|M_1| \lesssim \Gr^{i} (i+1)^{m}, \quad
|M_2| \lesssim \frac{\Gr^{i} (i+1)^{m-1}}{\sqrt{\Ge}}.
$$
Since
$$
|t\Gr^{4i} \Gz -1| \ge 1- t\Gr^{i} R_1
$$
for any $\Gz$ with $|\Gz| \le R_1$, we have
$$
|t\Gr^{4i} \Gz -1| \gtrsim \sqrt{\Ge} (i+1).
$$
Thus we have
$$
|M_3| \lesssim \frac{\Gr^{i} (i+1)^{m-1}}{\sqrt{\Ge}}.
$$
It then follows that
\beq\label{4221}
\sum_{z \in D_1} \left| \frac{\p}{\p t} \Phi_{2i,t}^{(m)}(z) \right| \lesssim \Gr^{i} (i+1)^{m} + \frac{\Gr^{i} (i+1)^{m-1}}{\sqrt{\Ge}}.
\eeq

By \eqnref{vjlest}, we have
$$
\| g \circ \Phi_{2i,t} \|_{C^{n+1}(\ol{D_1})} \lesssim \Gr^{i} i^{n+1} \| g \|_{C^{n+1}(\ol{D_1})}
\lesssim \Gr^{i} i^{n+1} \| g \|_{n+1,\Ga}^*.
$$
We then infer from \eqnref{4220} and \eqnref{4221} that
$$
\| Y_{2i}^1-Y_{2i+1}^1 \|_{C^{n+1}(\ol{D_1})} \lesssim (1-\Gr^2) \left( \Gr^{i} (i+1)^{m} + \frac{\Gr^{i} (i+1)^{m-1}}{\sqrt{\Ge}} \right) \| g \|_{n+1,\Ga}^*.
$$
Since $1-\Gr^2 \lesssim \sqrt{\Ge}$, we have
\beq
\| Y_{2i}^1-Y_{2i+1}^1 \|_{C^{n+1}(\ol{D_1})} \lesssim \Gr^{i} \left( \sqrt{\Ge} (i+1)^{n+1} + (i+1)^{n} \right) \| g \|_{n+1,\Ga}^*.
\eeq
It then follows that
$$
\left\| \sum_{i=0}^\infty J_i \right\|_{C^{n+1}(\ol{D_1})} \lesssim \sum_{i=0}^\infty \frac{\Gr^{i}}{\Gl^i} \left( \sqrt{\Ge} (i+1)^{n+1} + (i+1)^{n} \right) \| g \|_{n+1,\Ga}^* .
$$
Since
\begin{align*}
\sum_{i=0}^\infty \frac{\Gr^{i}}{\Gl^i} \left( \sqrt{\Ge} (i+1)^{n+1} + (i+1)^{n} \right)
&\lesssim \frac{\sqrt{\Ge}}{(\Gl-\Gr)^{n+1}} + \frac{1}{(\Gl-\Gr)^{n}} \lesssim \frac{1}{(\Gl-\Gr)^{n}},
\end{align*}
we have
\beq
\left\| \sum_{i=0}^\infty J_i \right\|_{C^{n+1}(\ol{D_1})} \lesssim \frac{\| g \|_{n+1,\Ga}^*}{(\Gl-\Gr)^{n}}.
\eeq
This together with \eqnref{Isum} yields
\beq\label{4230}
\left\| w_1 \circ T \right\|_{C^{n+1}(\ol{D_1})} \lesssim \frac{\| g \|_{n+1,\Ga}^*}{(\Gl-\Gr)^{n}}.
\eeq

Similarly, one can show that
\beq\label{4231}
\left\| w_2 \circ T \right\|_{C^{n+1}(\ol{D_2})} \lesssim \frac{\| g \|_{n+1,\Ga}^*}{(\Gl-\Gr)^{n}}.
\eeq
Then, as before, one can show
$$
\| A_j \|_{n+1,0} \lesssim \frac{\| g \|_{n+1,\Ga}^*}{(\Gl-\Gr)^{n}}
$$
for $j=1,2$, which yields the desired estimate \eqnref{mainest3-1}.
This completes the proof. \qed

\bigskip

Since the solution to \eqnref{HCE} admits the same representation as the one in Proposition \ref{prop:rep}, Theorem \ref{thm:main4} and \ref{thm:main5} can be proved in the same way as Theorem \ref{thm:main1} and \ref{thm:main3}.

\section{Optimality}\label{sec:opti}

In this section we show that there is $f$ (or $H$) such that the reverse inequality holds in \eqnref{mainest1}, \eqnref{mainest4-1}, \eqnref{mainest3-1}, and \eqnref{mainest4-2}.

Let $F$ be a smooth function in $\Rbb^2$ with a compact support such that $F(z)=x_1$ in a neighborhood of $\ol{D_1 \cup D_2}$. Let $f:= \GD F$. Then the relation \eqnref{Fdef} holds, namely, $F$ is the Newtonian potential of $f$. Note that $f=0$ in $D_1 \cup D_2$. The solution $H_j$ of \eqnref{bvp} is given by $H_j(z)=x_1$ in $D_j$ for $j=1,2$.
We show the following:
\begin{itemize}
\item[(i)] For the case when $(k_1-1)(k_2-1) >0$, we take $k_1=k_2=\infty$ and show that the solution $u$ to \eqnref{NCE} with $f$ defined above satisfies
\beq\label{reverse1}
|\nabla u(z)| \gtrsim \Ge^{-1/2}
\eeq
for some $z \in \Rbb^2 \setminus \ol{D}$.
\item[(ii)] For the case when $(k_1-1)(k_2-1) < 0$, we take either $k_1=0, \ k_2=\infty$ or $k_1=\infty, \ k_2=0$ and show that the solution $u$ to \eqnref{NCE} with $f$ defined above satisfies
\beq\label{reverse2}
|\nabla^2 u(z)| \gtrsim \Ge^{-1/2}
\eeq
for some $z \in \Rbb^2 \setminus \ol{D}$. ($\nabla u$ is bounded.)
\end{itemize}
Similar estimates hold for the solution to the homogeneous problem with $H(x)=x_1$.

According to the definition \eqnref{hjdef} of $h_j$, we have
\beq\label{hjexample}
h_j(\Gz)= \frac{\Gb}{\Gz-1}, \quad \Gz \in D_j^*.
\eeq

Suppose that $k_1=k_2=\infty$. Then, $\Gl_1+\Gl_2=1$ and $\Gl_1-\Gl_2=0$.
Since $4 \Gl_1\Gl_2=1$ in this case, it follows from the formulas \eqnref{A1def} and \eqnref{A2def} of $A_j$ that
$$
V(\Gz) = A_1(\Gz) + A_2(\Gz) = w_1(R_1^2 \Gzbar^{-1}) - w_1(\Gr^2 \Gz) + w_2(R_2^2 \Gz^{-1}) - w_2(\Gr^{-2} \Gzbar)
$$
if $R_1 < |\Gz| \leq R_2$. Then the formulas \eqnref{w1def2} and \eqnref{w2def2} for $w_j$ yield
\begin{align*}
\p_\Gz V(\Gz) &= - \Gr^2 w_1'(\Gr^2 \Gz) - R_2^2 \Gz^{-2} w_2'(R_2^2 \Gz^{-1}) \\
& = -\sum_{l=0}^\infty \Gr^{2l+2} h_1' (\Gr^{2l+2}\Gz) - \sum_{l=0}^\infty \Gr^{-2l}R_2^2 \Gz^{-2} h_2' (\Gr^{-2l}R_2^2 \Gz^{-1}) .
\end{align*}
It thus follows from \eqnref{hjexample} that
\beq\label{pzV}
\p_\Gz V(\Gz) = \Gb \sum_{l=0}^\infty \left[ \frac{\Gr^{2l+2}}{(1-\Gr^{2l+2}\Gz)^2} + \frac{\Gr^{2l} R_2^{-2}}{(1-\Gr^{2l} R_2^{-2}\Gz)^2} \right] .
\eeq
In particular, we have
$$
\p_\Gz V(-1) = \Gb \sum_{l=0}^\infty \left[ \frac{\Gr^{2l+2}}{(1+\Gr^{2l+2})^2} + \frac{\Gr^{2l} R_2^{-2}}{(1+\Gr^{2l} R_2^{-2})^2} \right] \gtrsim \frac{1}{1-\Gr} \gtrsim \frac{1}{\Gb}.
$$
Note that
\beq\label{dGzdz}
\frac{\p^k \Gz}{\p z^k}= (-1)^k \frac{k! \Gb}{z^{k+1}} = (-1)^k k! \frac{(\Gz-1)^{k+1}}{\Gb^{k}}, \quad k=1,2,\ldots.
\eeq
Thus we have
\beq
\big| \p_z (V\circ T)(T^{-1}(-1)) \big| = \Big| \p_\Gz V(-1) \frac{\p \Gz}{\p z} \Big|_{\Gz=-1} \Big| \gtrsim \frac{1}{\Gb},
\eeq
and we arrive at \eqnref{reverse1}.

\medskip

We now consider the case when $(k_1-1)(k_2-1) < 0$. If $k_1=0$ and $k_2=\infty$, then $\Gl_1=-1/2$ and $\Gl_2=1/2$, and hence $\Gl_1+\Gl_2=0$, $\Gl_1-\Gl_2=-1$, and $4 \Gl_1\Gl_2=-1$. We have from \eqnref{A1def} and \eqnref{A2def} that
$$
V(\Gz) = -w_1(\Gr\Gz) - w_1(\Gr^2 \Gz) + w_2(\Gr^{-1} \Gzbar) - w_2(\Gr^{-2} \Gzbar)
$$
if $R_1 < |\Gz| \leq R_2$. Thus we have
\beq
\p_\Gz V(\Gz) = - \Gr w_1'(\Gr\Gz) - \Gr^2 w_1'(\Gr^2 \Gz).
\eeq

We see from \eqnref{w1def2} and \eqnref{hjexample} that
$$
w_1(\Gz) = \Gb \sum_{l=0}^\infty (-1)^{l+1} \frac{1}{1-\Gr^{2l}\Gz} = -\Gb \sum_{k=0}^\infty \frac{\Gz^k}{1+\Gr^{2k}},
$$
and hence
$$
w_1'(\Gz) = -\Gb \sum_{k=1}^\infty \frac{k\Gz^{k-1}}{1+\Gr^{2k}}.
$$
Thus, we have
$$
\p_\Gz V(\Gz) = \Gb \sum_{k=1}^\infty \frac{k (\Gr^{k} + \Gr^{2k}) \Gz^{k-1}}{1+\Gr^{2k}} .
$$
We also have
$$
\p_\Gz^2 V(\Gz) = \Gb \sum_{k=2}^\infty \frac{k(k-1) (\Gr^{k} + \Gr^{2k}) \Gz^{k-2}}{1+\Gr^{2k}} .
$$
Note that
$$
\p_z^2 (V\circ T)(z) = \p_\Gz^2 V(\Gz) \left(\frac{\p \Gz}{\p z} \right)^2 + \p_\Gz V(\Gz) \frac{\p^2 \Gz}{\p z^2}.
$$
Thus \eqnref{dGzdz} yields
$$
\p_z^2 (V\circ T)(z) = \frac{(\Gz-1)^3}{\Gb} \Big[ \sum_{k=1}^\infty \frac{k(k+1) (\Gr^{k} + \Gr^{2k}) \Gz^{k-1}}{1+\Gr^{2k}} +  \sum_{k=2}^\infty \frac{k(k-1) (\Gr^{k} + \Gr^{2k}) \Gz^{k-2}}{1+\Gr^{2k}} \Big].
$$
Note that
$$
\int_{-\pi}^{\pi} \Gb \p_z^2 (V\circ T)(T^{-1}(e^{i\Gt})) d\Gt= - 2\pi C,
$$
where
$$
C= \frac{2(\Gr + \Gr^{2})}{1+\Gr^{2}} +  \frac{2(\Gr^{2} + \Gr^{4})}{1+\Gr^{4}}.
$$
It means that there is a point, say $\Gz_*$, such that $|\Gz_*|=1$ and
$$
\Gb \p_z^2 (V\circ T)(T^{-1}(\Gz_*)) \le - C.
$$
Note that $C \ge 1$ if $\Ge$ is sufficiently small. Thus we have
\beq\label{7100}
\big| \p_z^2 (V\circ T)(T^{-1}(\Gz_*)) \big| \gtrsim \frac{1}{\Gb} ,
\eeq
which yields \eqnref{reverse2}.

If $k_1=\infty$ and $k_2=0$, then $\Gl_1+\Gl_2=0$, $\Gl_1-\Gl_2=1$, and $4 \Gl_1\Gl_2=-1$. We have from \eqnref{A1def} and \eqnref{A2def} that
$$
V(\Gz) = w_1(\Gr\Gz) - w_1(\Gr^2 \Gz) - w_2(\Gr^{-1} \Gzbar) - w_2(\Gr^{-2} \Gzbar)
$$
if $R_1 < |\Gz| \leq R_2$. Thus we have
\beq
\p_{\Gzbar} V(\Gz) = - \Gr^{-1} w_2'(\Gr^{-1} \Gzbar) - \Gr^{-2} w_2'(\Gr^{-2} \Gzbar).
\eeq
Now in the same way as above, one can show that there is a $\Gz_{**}$ such that $|\Gz_{**}|=1$ and
\beq\label{7200}
\big| \p_{\zbar}^2 (V\circ T)(T^{-1}(\Gz_{**})) \big| \gtrsim \frac{1}{\Gb} ,
\eeq
which again yields \eqnref{reverse2}.



\end{document}